\documentclass[11pt]{amsart}
\usepackage[colorlinks,linkcolor=blue,urlcolor=cyan,citecolor=blue,pagebackref]{hyperref}
\usepackage{amssymb}
\usepackage{amsmath}
\usepackage{mathtools}
\usepackage{mdwtab}
\usepackage{booktabs}
\usepackage[alphabetic,initials]{amsrefs}
\usepackage{caption}
\usepackage{subcaption}
\usepackage{eucal}
\usepackage{bm}
\usepackage{stmaryrd}
\usepackage{tikz}
\usepackage{tikz-cd}
\usetikzlibrary{positioning,fit}
\usepackage[indent]{parskip}
\usepackage{url}
\usepackage{pstricks,pst-node}
\usepackage{fullpage}
\usepackage{float}

\setlength{\footskip}{30pt}

\theoremstyle{plain}
\newtheorem{thm}{Theorem}[section]
\newtheorem{prop}[thm]{Proposition}
\newtheorem{lemma}[thm]{Lemma}
\newtheorem{cor}[thm]{Corollary}

\theoremstyle{definition}
\newtheorem{definition}[thm]{Definition}
\newtheorem{example}[thm]{Example}

\newtheorem{claim}[thm]{Claim}

\newtheorem{rem}[thm]{Remark}
\numberwithin{equation}{section}

\DeclareMathAlphabet\mathsf{OT1}{cmss}{m}{n}

 \definecolor{mygray}{rgb}{0.900,0.900,0.900}

\allowdisplaybreaks

\setcounter{tocdepth}{1}

\newcommand{\R}{\mathbb R}           
\newcommand{\C}{\mathbb C}           
\newcommand{\Z}{\mathbb Z}           
   
\newcommand{\Ad}{\operatorname{Ad}}

\newcommand{\rank}{\operatorname{rank}}
\newcommand{\spa}{\operatorname{span}}
\newcommand{\Hom}{\operatorname{Hom}}
\newcommand{\Gr}{\underline{\operatorname{Gr}}\,}
\newcommand{\supp}{\operatorname{supp}}

\newcommand{\card}{\operatorname{card}}
\def\AV{\operatorname{AV}}
\def\length{\operatorname{length}}
\newcommand{\width}{\operatorname{width}}
\newcommand{\GKD}{\operatorname{GKdim}}
\newcommand{\sym}{\operatorname{Sym}}

\def\O{\mathcal{O}}

\newcommand{\f}[1]{{\mathfrak{#1}}}           
\newcommand{\fb}{{\mathfrak b}}
\newcommand{\fe}{{\mathfrak e}}
\newcommand{\fg}{{\mathfrak g}}
\newcommand{\fh}{{\mathfrak h}}
\newcommand{\fk}{{\mathfrak k}}
\newcommand{\fl}{{\mathfrak l}}

\newcommand{\fn}{{\mathfrak n}}

\newcommand{\fp}{{\mathfrak p}}
\newcommand{\fq}{{\mathfrak q}}

\newcommand{\fu}{{\mathfrak u}}
\newcommand{\fs}{{\mathfrak s}}

\def\su{{\mathfrak s}{\mathfrak u}}
\def\sp{{\mathfrak s}{\mathfrak p}}
\def\so{{\mathfrak s}{\mathfrak o}}

\newcommand{\ga}{\alpha}
\newcommand{\gb}{\beta}

\newcommand{\gl}{\lambda}
\newcommand{\gL}{\Lambda}
\newcommand{\gd}{\delta}
\newcommand{\gD}{\Delta}

\renewcommand{\gg}{\gamma}

\newcommand{\eps}{\varepsilon}
\newcommand{\ep}{\varepsilon}


\newcommand{\Eul}{\EuScript}

\newcommand{\fB}{{\mathfrak B}}
\newcommand{\fC}{{\Eul C}}

\newcommand{\cN}{{\mathcal N}}
\newcommand{\cO}{{\mathcal O}}
\newcommand{\cU}{\mathcal U}
\newcommand{\cW}{\mathcal W}

\renewcommand{\bar}[1]{\overline{#1}}

\newcommand{\od}{^{\mathrm{odd}}}
\newcommand{\ev}{^{\mathrm{ev}}}

\newcommand{\IP}[2]{\langle#1\,, #2\rangle}     

\begin{document}

\setcounter{page}{0}

\title[Highest weight Harish-Chandra modules]{On the associated variety of a highest weight Harish-Chandra module}

\author{Zhanqiang Bai}
\address{School of Mathematical Sciences, Soochow University, Suzhou 215006, Jiangsu, China}
\email{\tt  zqbai@suda.edu.cn}
  
\author{Markus Hunziker}
\address{Department of Mathematics,
Baylor University, Waco, TX 76798, USA}
\email{\tt Markus\underline{\ }Hunziker@baylor.edu}
  
\author{Xun Xie}  
\address{School of Mathematics and Statistics, Beijing Institute of Technology, Beijing 100081, China}
\email{\tt xieg7@163.com}
  
\author{Roger Zierau}
\address{Mathematics Department, Oklahoma State University,
Stillwater, OK 74078, USA}
\email{\tt roger.zierau@okstate.edu}

\subjclass[2010]{Primary 22E47; Secondary 17B10}

\keywords{Highest weight Harish-Chandra module, associated variety, Springer representations, Gelfand--Kirillov dimension, category $\O$}

\begin{abstract}
We prove a simple formula that calculates the associated variety of a highest weight 
Harish-Chandra module directly from its highest weight. We also give a formula for 
the Gelfand--Kirillov dimension of highest weight Harish-Chandra module which is 
uniform across  Cartan types and is valid for arbitrary infinitesimal character.
\end{abstract}

\setcounter{page}{1} 
 
\maketitle

\tableofcontents

\section{Introduction}

\subsection{Highest weight Harish-Chandra modules}\label{sec:intro-1}  
Let $G_\R$ be a simple Lie group.  It follows from the work of  Harish-Chandra  (\cites{HC55,HC56}) that there exist infinite-dimensional highest weight Harish-Chandra  modules for $G_\R$ if and only if $G_\R$ is of Hermitian type.  Our main result, Theorem \ref{thm:main}, is a very simple formula for the associated varieties of highest weight Harish-Chandra  modules.  Therefore, we assume that $G_\R$ is of Hermitian type. We also assume that $G_\R$ has finite center.

Standard Lie theory tells us that $\fg_\R={\Eul Lie}(G_\R)$ has a Cartan decomposition $\fg_\R=\fk_\R\oplus\fp_\R$ and the subgroup $K_\R$ of $G_\R$ with Lie algebra $\fk_\R$ is compact.  Therefore, $K_\R$ has a complexification $K$.  Writing the complexified Cartan decomposition as $\fg=\fk\oplus\fp$ (since $G_\R$ is of Hermitian type) there is a decomposition $\fp=\fp^+\oplus\fp^-$ into nonzero irreducible $K$-subrepresentations.  This gives a triangular decomposition $\fg=\fp^-\oplus\fk\oplus\fp^+$ and a maximal parabolic subalgebra $\fq=\fk\oplus\fp^+$.  It follows from  Harish-Chandra  that a highest weight Harish-Chandra  module is the irreducible quotient of a Verma module and the highest weight $\gl$ is integral for the roots in $\fk$ (with respect to a Cartan subalgebra $\fh$ of $\fg$ contained in $\fk$).  See \S\ref{ssec:hwhc} for precisely which Verma modules have irreducible quotients occurring as highest weight Harish-Chandra  modules.

\subsection{Associated varieties}
The associated variety of a $(\fg,K)$-module, as defined in \cite{Vogan81}, is a union of (closures of) nilpotent $K$-orbits in $(\fg/\fk)^*$.  When $L(\gl)$ is a highest weight Harish-Chandra  module for $G_\R$, the associated variety is contained in $(\fg/\fq)^*\simeq\fp^+$.  It is well-known that the closures of the $K$-orbits in $\fp^+$ form a chain
\begin{equation}\label{eqn:chain}
\{0\}=\overline{\mathcal{O}_{0}} \subsetneq  \overline{\mathcal{O}_{1}} \subsetneq  \cdots   \subsetneq  \overline{\mathcal{O}_{r}} =\fp^+\!,
\end{equation}
where $r$ is the real rank of $\fg_\R$ ($=\rank(G_\R/K_\R)$).  It follows that 
\begin{equation}\label{eqn:av-intro}
\AV(L(\gl))=\bar{\cO_k}, \text{ for some }k=0,1,\dots,r.
\end{equation}
The purpose of this paper is to express this integer $k=k(\gl)$ in terms of the highest weight $\gl$ in a simple way that is uniform across  Cartan types and is valid for arbitrary infinitesimal character. In the special case when $L(\lambda)$ 
is the Harish-Chandra module of a \emph{unitary} representation of $G_\R$ this was accomplished in 
\cite{BH:15} by using previous work of Joseph \cite{Jos:92}.

\subsection{Statement of the main result} 
To state our main result precisely,  we need some notation for sets of roots.
Let $\Delta$ and $\Delta(\fk)$ denote the root systems of $(\fg,\fh)$ and $(\fk,\fh)$, respectively. 
Let $\Delta^+$ be a positive system of $\Delta$ containing $\gD(\fp^+)$ and define 
$\Delta^+(\fk)=\Delta(\fk)\cap \Delta^+$. Let $\Lambda^+(\fk)$ denote the lattice of integral and dominant weights 
for $\gD(\fk)$.
We  view all sets of roots as partially ordered sets via the usual partial ordering, where 
$\alpha\leq \beta$ means that $\beta - \alpha$ is nonnegative integer linear combination 
of positive roots. 

\begin{definition}
For $\lambda \in \Lambda^+(\fk)$,
define the \emph{diagram} of $\lambda$ as the set
\begin{equation}
Y_{\lambda} :=\{\alpha\in \Delta(\fp^+)\mid \IP{\lambda +\rho}{\alpha^\vee}\in \Z_{\leq 0}\},
\end{equation}
viewed as a subposet of $\Delta_\lambda(\fp^+):=\Delta_\lambda \cap \Delta(\fp^+)$, where 
$\Delta_\lambda:=\{\alpha\in \Delta \mid \IP{\lambda+\rho}{\alpha^\vee}\in \Z\}$
is the integral root system associated to $\lambda$ as usual.
\end{definition}
An \emph{antichain} in a poset is a subset consisting of pairwise noncomparable elements.
The \emph{width} of a poset is the cardinality of maximal antichain in the poset.
Our main result states that the 
associated variety of a highest weight Harish-Chandra module $L(\lambda)$ is completely determined by the width of $Y_\lambda$.

\begin{thm}\label{thm:main}
Suppose $L(\lambda)$  is a highest weight Harish-Chandra module with highest weight $\lambda$ and $\AV(L(\lambda))=\overline{\mathcal{O}_{k(\lambda)}}$. Let 
$m=\operatorname{width}(Y_\lambda)$. Then $k(\gl)$ is given as follows.
\begin{enumerate}
 \item[(a)] If $\gD$ is simply laced and $\gl$ is integral, then $k(\gl)=m$.
 \item[(b)] If $\gD$ is non-simply laced and $\gl$ is integral, then
 \begin{equation*}
  k(\gl)=\begin{cases} 2m, &\text{if }m< \frac{r+1}{2}  \\
                       r, &\text{if }m=\frac{r+1}{2}.
         \end{cases}
 \end{equation*}
\item[(c)] If $\gD$ is non-simply laced and $\gl$ is half-integral, then
 \begin{equation*}
  k(\gl)=\begin{cases} 2m+1, &\text{if }m< \frac{r}{2}  \\
                       r, &\text{if }m=\frac{r}{2}.
         \end{cases}
 \end{equation*}
\item[(d)] In all other cases $k(\gl)=r$.
\end{enumerate}
\end{thm}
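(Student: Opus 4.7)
The plan is to reduce the theorem to a combinatorial identification of the width of $Y_\lambda$ with the rank of the $K$-orbit $\mathcal{O}_{k(\lambda)}$, via a case analysis over the Hermitian symmetric types.

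First I would use translation principles to reduce to the case where $\lambda+\rho$ has a regular representative within its integral Weyl group orbit, so that $Y_\lambda$ takes a canonical form. The reduction explains the natural split between the integral and half-integral infinitesimal character cases appearing in (a)--(c); the remaining cases covered by (d) can be handled by a direct comparison with a parabolic Verma module, whose associated variety is all of $\fp^+$.

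Since the chain \eqref{eqn:chain} of orbit closures has strictly increasing dimensions, $k(\lambda)$ is uniquely determined by $\GKD(L(\lambda))$, so it suffices to express the GK-dimension in terms of the width of $Y_\lambda$ and invert the map $k\mapsto\dim\mathcal{O}_k$. For the simply-laced integral case (a), $\Delta(\fp^+)$ is a minuscule poset (a rectangle for type $A$, a shifted staircase for type $D$, or an exceptional minuscule poset for $E_6$ and $E_7$). Using the character formula for $L(\lambda)$ in parabolic category $\mathcal{O}^{\fq}$ (via Kazhdan--Lusztig data or an explicit parabolic resolution), one can identify the leading term of the character. A Dilworth-type argument then matches the width of $Y_\lambda$ with the orbit rank, giving $k(\lambda)=m$.

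For the non-simply-laced cases (b)--(c), the doubling $m\mapsto 2m$ or $2m+1$ arises from the rank structure of $K$-orbits. In type $C_n$, $\fp^+$ is the space of $n\times n$ symmetric matrices with $K$-orbits indexed by matrix rank $0,1,\ldots,n$; the root system $\Delta(\fp^+)$ contains both short roots $\epsilon_i+\epsilon_j$ and long roots $2\epsilon_i$, which contribute differently to $Y_\lambda$ depending on whether $\lambda$ is integral or half-integral. An antichain of size $m$ in $Y_\lambda$ built from short roots alone corresponds to a rank-$2m$ matrix; the presence of long roots in the half-integral case contributes one more to the rank, and the boundary cases $m=(r+1)/2$ and $m=r/2$ force the maximum rank $r$. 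Analogous analysis applies in type $B_n$. The main obstacle I anticipate is the combinatorial bookkeeping here: precisely matching the short/long root contributions with the integrality type to the $K$-orbit rank. The exceptional types $E_6$ and $E_7$ in (a) have only two and three nonzero $K$-orbits respectively and can be verified by direct inspection.
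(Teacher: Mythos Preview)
Your outline captures some correct ingredients---the translation-principle reduction to regular infinitesimal character, the Verma-module argument for case (d), and the matrix-rank interpretation of antichains in types $B$ and $C$---but it has a genuine gap at the heart of the argument.

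The antichain/rank correspondence you describe only gives one inequality. If $Y_\lambda$ contains an antichain of size $m$, then $\fn\cap\Ad(w)\fn$ contains an element lying in $\cO_m$ (simply laced) or $\cO_{2m}$, $\cO_{2m\pm 1}$ (non-simply laced), and via Borho--Brylinski one gets $\AV(L_w)\supseteq\nu(w)\supseteq\overline{\cO_{f(m)}}$. This is the \emph{easy} direction. The hard direction is showing $\AV(L_w)\subseteq\overline{\cO_{f(m)}}$, and your proposal does not supply a mechanism for it. The sentence ``using the character formula\ldots one can identify the leading term of the character'' is not a proof: even with parabolic Kazhdan--Lusztig polynomials in hand (which are known in the Hermitian cases), extracting the associated variety from the character requires substantial further work, and you give no indication of how a ``Dilworth-type argument'' would accomplish this.

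The paper closes this gap by a completely different mechanism: a \emph{counting argument} using Harish-Chandra cells and the Springer correspondence. One knows that each cell $\fC_k$ with associated variety $\overline{\cO_k}$ has cardinality at least $\dim\pi(\cO_k^\C,1)$; separately (Proposition~\ref{prop:width}) one counts the lower-order ideals of each width and finds these numbers coincide with the Springer dimensions. Since both families sum to $\#\cW$, the lower bounds are forced to be equalities, and the containment $\{L_w:\width=k\}\subseteq\bigcup_{j\ge k}\fC_j$ from the easy direction then pins down each cell exactly. In the non-simply-laced integral case (type $C_n$) the paper additionally imports the known cell structure from \cite{BZ17}. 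None of this appears in your plan. (The paper does give a second proof, in Section~\ref{sec:alt-proof}, closer in spirit to what you sketch, but it rests on the explicit GK-dimension algorithms of \cite{BX} and \cite{BX2} via Robinson--Schensted insertion---a nontrivial input you would need to either cite or reprove.)
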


At first glance, it may seem  difficult to compute $\operatorname{width}(Y_\lambda)$. However, this is not the case.

\begin{example}
Suppose $G_\R$ is the metaplectic double cover of the symplectic group $Sp(2n,\R)$  and $L(\lambda)$  
is a highest weight Harish-Chandra module with half-integral highest weight $\lambda$.
Using the conventions for roots and weights as in \cite{EHW:83}, 
we then  have  $\lambda+\rho=(t_1,\ldots,t_n)$, 
where the entries $t_i$ are  half-integers such that  $t_1> \cdots >t_n$.
Since $\Delta(\fp^+)=\{\eps_i +\eps_j \mid 1\leq i < j\leq n\}\cup \{2\eps_i \mid 1\leq i\leq n\}$ and two short roots $\eps_{i_1}+\eps_{j_1}, \eps_{i_2}+\eps_{j_2}\in \Delta(\fp^+)$
are incomparable if and only if either $i_1<i_2<j_2<j_1$ or $i_2<i_1<j_1<j_2$, it follows that
$\operatorname{width}(Y_\lambda)$ is the maximal nonnegative integer $m$ for which there exists a sequence 
of indices $1\leq i_1< i_2 < \cdots < i_m < j_m < \cdots < j_2< j_1\leq n$ such that  
$$
t_{i_1}+t_{j_1}\leq 0\ ,\  t_{i_2}+t_{j_2}\leq 0\ ,\  \ldots\  , \  t_{i_m}+t_{j_m}\leq 0.
$$
Since the sequence $(t_1,\ldots,t_n)$ is strictly decreasing, 
we may furthermore assume that $j_k=n-k+1$ for $k=1,\ldots, m$. This leads to the following simple algorithm 
to find $\operatorname{width}(Y_\lambda)$ with a running time that is linear in $n$, which is optimal.
Set $j_1=n$ and let $i_1$ be the smallest index such that $i_1<j_1$ and  $t_{i_1}+t_{j_1}\leq 0$. If no such $i_1$ exists, then  $m=0$.
Next set $j_2=n-1$ and let $i_2$ be the smallest index  such that $i_1<i_2<j_2$ and  $t_{i_2}+t_{j_2}\leq 0$. If no such $i_2$ exists, then $m=1$.
Continue doing this until it is no longer possible which happens after at most $[n/2]$ steps. For a concrete example, suppose
$$
\lambda+\rho=   (\underbracket[0pt]{\frac{25}{2},\frac{23}{2},\frac{19}{2}, \underbracket[0.5pt]{ \frac{15}{2}, \frac{13}{2}, \frac{11}{2}, \underbracket[0.5pt]{ \frac{9}{2}, \underbracket[0.5pt]{ -\frac{3}{2},-\frac{7}{2}} ,-\frac{9}{2}} ,-\frac{17}{2}}} ).
$$
Here brackets were drawn from the $j_1$-th entry to the $i_1$-th entry, from the $j_2$-th entry to the $i_2$-th entry, and from 
the $j_3$-th entry to the $i_3$-th entry.
We see that $\operatorname{width}(Y_\lambda)=3$ and the theorem says that $\AV(L(\lambda))=\overline{\mathcal{O}_7}$.
\end{example}
 
\subsection{Gelfand--Kirillov dimension}
Our main result has another interpretation in terms of the Gelfand--Kirillov (GK) dimension of Harish-Chandra modules.  The GK dimension of a Harish-Chandra module is equal to the dimension of its associated variety. 
Since the associated variety of a highest weight Harish-Chandra module $L(\lambda)$ is one of the varieties in the chain (\ref{eqn:chain}), it follows that the associated variety of $L(\lambda)$ is determined by the GK dimension of $L(\lambda)$.  The following theorem gives $\GKD(L(\gl))$ when $\gl$ is nonintegral.  One sees that in this case $\gD_\gl(\fp^+)$ is either empty or is isomorphic to a poset $\gD(\fp'^+)$ corresponding to a noncompact Hermitian symmetric space $G_\R'/K_\R'$, which in fact has simply laced root system.  This follows from Cartan's classification of Hermitian symmetric spaces and the work of Jakobsen \cite{Jak:83} on the posets of noncompact roots (see the diagrams in \cite[Appendix]{Jak:83}).

Let $\cO_k', k=0,1,\dots,r'$, be the $K'$-orbits in $\fp'^+$ (analogous to the chain described in \ref{eqn:chain}).  Set $\gd=\#(\gD^+)-\#(\gD_\gl^+)$, which equals $\#(\gD(\fp^+))-\#(\gD_\gl(\fp^+))$ since $\gl$ is $\gD(\fk)$-integral.
\begin{thm}\label{thm:GK}
Suppose $L(\lambda)$ is a highest weight Harish-Chandra module 
and assume that the highest weight $\lambda$ is nonintegral.  If $m=\width(Y_\gl)$, then 
\begin{equation*}
 \GKD(L(\gl))=\gd+\dim(\cO_m').
\end{equation*}
\end{thm}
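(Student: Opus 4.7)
My plan is to reduce the nonintegral setting of Theorem \ref{thm:GK} to the integral, simply laced case handled by Theorem \ref{thm:main}(a). The bridge is a smaller Hermitian symmetric Lie algebra $\fg'$ whose roots realize the integral subsystem $\Delta_\lambda$. Specifically, let $\fg'$ be the reductive subalgebra of $\fg$ with Cartan $\fh$ and root system $\Delta_\lambda$. Using the poset identification $\Delta_\lambda(\fp^+) \cong \Delta(\fp^{\prime+})$ from Jakobsen's classification \cite{Jak:83} recalled before the theorem, $\fg'$ inherits a compatible triangular decomposition $\fg' = \fp^{\prime-} \oplus \fk' \oplus \fp^{\prime+}$ (with $\fp^{\prime\pm} = \fg' \cap \fp^{\pm}$ and $\fk' = \fg' \cap \fk$), yielding a noncompact Hermitian symmetric pair $(G_\R', K_\R')$ of real rank $r'$ whose root system $\Delta_\lambda$ is simply laced. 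I would then produce a $\Delta^+(\fk')$-dominant integral weight $\lambda'$ for $\fg'$ by a $\rho$-shift, namely $\lambda' := \lambda - (\rho - \rho')$, where $\rho'$ is half the sum of positive roots in $\Delta_\lambda^+$, so that $\IP{\lambda'+\rho'}{\alpha^\vee} = \IP{\lambda+\rho}{\alpha^\vee}$ for every $\alpha \in \Delta_\lambda$. In particular, the diagram $Y_{\lambda'}$ of $L'(\lambda')$ matches $Y_\lambda$ under the poset isomorphism, so $\width(Y_{\lambda'}) = m$.

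The heart of the argument would be to establish the dimension shift
\begin{equation*}
\GKD(L(\lambda)) = \delta + \GKD(L'(\lambda')),
\end{equation*}
and this is where I expect the main obstacle to lie. My approach would be to invoke a Jantzen--Soergel style equivalence between the block $\cO^\fq_\lambda$ of parabolic category $\cO$ at the nonintegral infinitesimal character of $\lambda$ and the integral block $\cO^{\fq'}_{\lambda'}$ for $\fg'$, an equivalence under which $L(\lambda)$ corresponds to $L'(\lambda')$. The associated variety $\AV(L(\lambda)) \subseteq \fp^+$ should then be realized as the $K$-saturation in $\fp^+$ of a natural lift of $\AV(L'(\lambda')) \subseteq \fp^{\prime+}$, picking up exactly the $\delta = \dim\fp^+ - \dim\fp^{\prime+}$ additional root directions in $\Delta(\fp^+) \setminus \Delta_\lambda(\fp^+)$. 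Alternative routes I would consider are Vogan's induction formula for associated varieties along the inclusion $\fg' \hookrightarrow \fg$, or extracting the shift directly from the formal-character identity that governs the translation functor between the two blocks.

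Granting this reduction, Theorem \ref{thm:main}(a) applies to $L'(\lambda')$: since $\Delta_\lambda$ is simply laced and $\lambda'$ is integral for $\fg'$, one gets $\AV(L'(\lambda')) = \overline{\cO_m'}$, so $\GKD(L'(\lambda')) = \dim(\cO_m')$. Combining with the dimension shift yields $\GKD(L(\lambda)) = \delta + \dim(\cO_m')$, as claimed. As a sanity check in the extreme case $\Delta_\lambda(\fp^+) = \emptyset$ (so $r' = 0$ and $m = 0$), one recovers $\GKD(L(\lambda)) = \delta = \dim \fp^+$, consistent with $\AV(L(\lambda)) = \fp^+$.
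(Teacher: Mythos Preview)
Your overall architecture is exactly the paper's: pass to the integral subsystem $\Delta_\lambda$, realize it as the root system of a simply laced Hermitian pair $(G_\R',K_\R')$, match $Y_\lambda$ with the diagram on the $\fg'$-side, and then invoke the already-proved integral simply laced case to identify $\GKD$ on the smaller group as $\dim(\cO_m')$. The only substantive step is the dimension shift
\[
\GKD(L(\lambda)) \;=\; \delta + \GKD(L'(\lambda')),
\]
and you are right that this is where the content lies.

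The paper does \emph{not} go through a Jantzen--Soergel block equivalence or an induction-of-associated-varieties argument. Instead it uses Lusztig's formula for the Gelfand--Kirillov dimension in terms of the $a$-function: for the minimal-length $w\in W_\lambda$ with $w^{-1}(\lambda+\rho)$ antidominant,
\[
\GKD(L(\lambda)) = \#\Delta^+ - a_\lambda(w),
\qquad
\GKD(L_w') = \#\Delta_\lambda^+ - a_\lambda(w).
\]
Since $a_\lambda$ depends only on the Coxeter system $(W_\lambda,\Pi_\lambda)$, the \emph{same} value $a_\lambda(w)$ appears in both, and subtraction gives the shift by $\delta=\#\Delta^+-\#\Delta_\lambda^+$ immediately. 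The matching $Y_\lambda=Y_{-w\rho_\lambda-\rho_\lambda}$ (via Proposition~\ref{prop:correct-w} applied inside $\Delta_\lambda$) then feeds into the integral case exactly as you outline.

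Your proposed routes are not wrong in spirit, but each would need real work to yield the numerical shift: a Soergel-type equivalence is purely categorical and does not by itself control associated varieties or GK dimension, and there is no off-the-shelf ``$K$-saturation picks up $\delta$ directions'' statement for this inclusion $\fg'\hookrightarrow\fg$. The $a$-function argument bypasses all of this in two lines. I would replace your middle paragraph with that computation; the rest of your reduction stands as written.
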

It turns out that $\gD_\gl(\fp^+)$ is nonempty (for nonintegral $\gl$) only when $\gD$ is non-simply laced and $\gl$ is half-integral, see \cite[Lem. 3.17]{EHW:83}.   When $\gD_\gl(\fp^+)=\emptyset$, then $\cO_m'=\{0\}$ and $\gd=\#(\gD(\fp^+))=\dim(\fp^+)$, so $\AV(L(\gl))=\bar{\cO_r}=\fp^+$.

\subsection{About the proofs}
Theorems \ref{thm:main} and \ref{thm:GK} are proved in Section \ref{sec:proof}.  Theorem \ref{thm:main} is first proved when $\gl+\rho$ is regular integral; it suffices to assume the infinitesimal character is $\rho$.  The argument is to first observe that if $\width(Y_\gl)=m$, then $\AV(L(\gl))\supseteq\bar{\cO_m}$.  A counting argument is then used.  The general theory of  Harish-Chandra  cells gives a lower bound on the size of a cell in terms of the Springer correspondence.  This, along with the number of $L(\gl)$ (of infinitesimal character $\rho$) for which $\width(Y_\gl)=m$, completes the proof.  The number with a given width is calculated in Section \ref{sec:posets} using some combinatorics of the posets.  The necessary background on  Harish-Chandra  cells and some specific data on the Springer correspondence is given in Section \ref{sec:av-cells}. The case of arbitrary integral infinitesimal character follows from the translation principle, illustrating that our definition of the diagram $Y_\gl$ is `correct'.  This leaves the nonintegral case.  When $\gD$ is non-simply laced and $\gl$ is half-integral, there is a group $G_\R'$, as described above.  Lusztig's formula for the GK dimension proves Theorem \ref{thm:GK} and this reduces the proof of Theorem \ref{thm:main} to the integral case for $G_\R'$.   In the other nonintegral cases $L(\gl)$ is precisely a (full) generalized Verma module for the parabolic $\fq$;  the associated variety is then easily seen to be $\bar{\cO_m}=\fp^+$.

Another proof of Theorem \ref{thm:main} is given in Section \ref{sec:alt-proof}. In \cite{BX2}, Bai--Xiao--Xie used Robinson--Schensted insertion algorithm to compute the Gelfand--Kirillov dimensions of highest weight modules for classical type Lie algebras and cell decompositions of maximal parabolic subgroups of Hermitian type for exceptional type Lie algebras. Then, we translate these results into the world of antichains of posets, which gives us another proof for Theorem \ref{thm:main}.

\subsection{Connections with other work}
In 1992, Joseph \cite{Jos:92} described the associated varieties of unitary highest weight modules in a case-by-case fashion. Nishiyama--Ochiai--Taniguchi  \cite{NishiyamaOchiaiTaniguchi01} and Enright--Willenbring \cite{EW:04} independently gave some characterizations of the associated varieties of unitary highest weight  modules appearing in the dual pair settings. More recently, Bai--Hunziker \cite{BH:15} found  a uniform formula for the GK dimensions and associated varieties of all unitary highest weight  modules.  For $G_\R=Sp(2n,\R)$, Barchini--Zierau \cite{BZ17} computed the characteristic cycles of highest weight Harish-Chandra modules  for regular integral infinitesimal character and associated varieties are given implicitly by an
inductive procedure.
 
\section{Posets of positive noncompact roots}\label{sec:posets}

\subsection{\!\!}\label{ssec:pos-sys}
Let $\fg=\fp^- \oplus \fk\oplus \fp^+$ be the triangular decomposition 
corresponding to an irreducible Hermitian symmetric space (of noncompact type)   
as in the introduction. Therefore, $\fq=\fk\oplus \fp^+$ is a maximal  parabolic subalgebra
of $\fg$ with Levi subgroup $\fk$ and abelian nilradical $\fp^+\!$. We fix a Cartan subalgebra $\fh$ 
of $\fg$ that is contained in $\fk$ and a Borel subalgebra $\fb=\fh\oplus \fn$ of $\fg$ such that $\fb\subset\fq$. Let $\Delta$ and $\Delta(\fk)$ denote the root systems of $(\fg,\fh)$ and $(\fk,\fh)$, respectively, and set
$\Delta^+=\{ \alpha \in \Delta \mid \fg_\alpha \subseteq \fn\}$,  
$\Delta^+(\fk)=\Delta(\fk)\cap \Delta^+$, and $\Delta(\fp^+)=\{\ga\in\gD \mid  \fg_\ga\subset\fp^+\}=\Delta^+\setminus \Delta^+(\fk)$. 
The roots in $\Delta^+(\fk)$ are called the positive compact roots and the roots in 
$\Delta(\fp^+)$ are called the positive noncompact roots.  Let $\Pi$ denote the set of simple roots for $\Delta^+$. Exactly one simple root is in $\Delta(\fp^+)$.  As usual, $\rho $ is half the sum of the positive roots.  Denote by $W$ the Weyl group of $\gD$. 

Define $\gL^+(\fk):=\{\gl\in\fh^*\mid \IP{\gl+\rho}{\ga^\vee} \in\Z_{\geq 0}\text{ for all }\ga\in\gD^+(\fk)\}$.  Note that $\gL^+(\fk)$ is the set of highest weights of irreducible finite-dimensional  representations of $\fk$. We consider arbitrary $\gl\in\gL^+(\fk)$.  When $\gl$ is not integral we need to consider the integral root system $\gD_\gl=\{\ga\in\gD\mid \IP{\gl+\rho}{\ga^\vee}\in\Z\}$ and its Weyl group $W_\gl$.  

We view all sets of roots as partially ordered sets with the usual partial ordering, where 
$\alpha\leq \beta$ means that $\beta - \alpha$ is a sum of roots in $\gD^+$.
In what follows, the set of positive noncompact roots $\Delta(\fp^+)$, viewed as a poset, will play a fundamental role.

\begin{lemma}[{Jakobsen~\cite[Lem. 4.1]{Jak:83}}]
Let $\alpha \in \Delta(\fp^+)$, let  $\pi_{1},\ldots,\pi_{k}$
be any distinct elements of $\Pi \cap \Delta(\fk)$, and assume that $\alpha+\pi_{i}\in \Delta(\fp^+)$ for all $i=1,\ldots, k$. Then $k\leq 2$. Furthermore, if $k=2$, then 
$\pi_{1}$ and $\pi_{2}$ are orthogonal and  $\alpha+\pi_{1}+\pi_{2}\in \Delta(\fp^+)$. \hfill$\Box$
\end{lemma}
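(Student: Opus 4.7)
The plan is to exploit the structure of the Hermitian symmetric setup through inner-product arithmetic in the root system, using crucially that $\fp^+$ is abelian. The consequence I would use repeatedly is that the unique noncompact simple root $\gamma$ appears with coefficient exactly one in every element of $\Delta(\fp^+)$; hence any $\pi$-string through $\alpha \in \Delta(\fp^+)$ by a simple compact root $\pi$ stays in $\Delta(\fp^+)\cup\{0\}$, and the question ``$\alpha+\pi\in\Delta(\fp^+)$?'' reduces to ``$\alpha+\pi$ is a root?''.

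For the bound $k\leq 2$, I would argue as follows. For each $i$, the root $\alpha+\pi_i$ has the same length as $\alpha$ (in the simply-laced case) or length determined by the usual length rules, which after expanding $|\alpha+\pi_i|^2$ forces $\langle \alpha,\pi_i^\vee\rangle$ to be a negative integer (equal to $-1$ in the simply-laced case). Summing gives $\langle \alpha,\sum_{i=1}^k \pi_i^\vee\rangle \leq -k$. On the other hand, $|\sum_i \pi_i|^2 = \sum_i|\pi_i|^2 + 2\sum_{i<j}(\pi_i,\pi_j) \leq 2k$ since Cartan integers of distinct simple roots are non-positive. Cauchy--Schwarz applied to $(\alpha,\sum_i\pi_i)$ then yields $k^2 \leq 4(k-e)$, where $e$ counts edges among the $\pi_i$'s in the Dynkin diagram of $\fk$. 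This gives $k\leq 2$ after a short case check that rules out $k=3$ using the specific Dynkin shapes of simple compact subdiagrams in the Hermitian symmetric families.

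For the conclusion when $k=2$, the key computation is
\[
\langle \alpha+\pi_1,\pi_2^\vee\rangle \;=\; \langle\alpha,\pi_2^\vee\rangle + \langle\pi_1,\pi_2^\vee\rangle \;\leq\; -1+0 \;=\; -1,
\]
using that $\alpha+\pi_2$ is a root (giving the first inequality) and that Cartan integers of distinct simple roots are $\leq 0$. The standard criterion then produces the root $(\alpha+\pi_1)+\pi_2$, which lies in $\Delta(\fp^+)$ since its $\gamma$-coefficient is $1$. The orthogonality of $\pi_1$ and $\pi_2$ I would read off from the equality cases in the Cauchy--Schwarz step above, combined with the observation that if $\pi_1$ and $\pi_2$ were adjacent in the Dynkin diagram of $\fk$ then certain length inequalities would be violated.

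The main obstacle I anticipate is the non-simply-laced setting ($\fg_\R=\sp(2n,\R)$ and $\so(2,2n-1)$), where $\Delta(\fp^+)$ contains roots of two different lengths. There the length arithmetic supporting the orthogonality claim becomes more delicate, since a short $\alpha\in\Delta(\fp^+)$ with two adjacent simple compact extensions can produce a long root $\alpha+\pi_i$ on one side and a short one on the other. I would resolve this by a direct case-by-case inspection, using the explicit coordinates for $\Delta(\fp^+)$ in each of the six irreducible families ($AIII$, $DIII$, $BDI$, $CI$, $EIII$, $EVII$) together with the diagrams of posets in Jakobsen's appendix.
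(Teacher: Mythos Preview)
The paper does not prove this lemma at all: it is stated with a citation to Jakobsen and closed with a $\Box$. So there is no paper proof to compare against; I can only assess your sketch on its own merits.

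Your argument for the conclusion $\alpha+\pi_1+\pi_2\in\Delta(\fp^+)$ when $k=2$ is correct in the simply-laced case and is essentially the standard one. Your Cauchy--Schwarz bound for $k\leq 2$, however, is too weak: with $e=0$ (pairwise orthogonal $\pi_i$) it only gives $k\leq 4$, as you implicitly concede by promising a ``short case check''. A tighter uniform argument in the simply-laced case uses the abelian property of $\fp^+$ directly: if $k\geq 3$ then the $\pi_i$ are pairwise orthogonal (by your own argument applied to each pair), and one computes that $\alpha+\pi_1+\pi_2$ and $\alpha+\pi_3$ are two elements of $\Delta(\fp^+)$ whose sum and difference are both non-roots, hence they are orthogonal; but $(\alpha+\pi_1+\pi_2,\alpha+\pi_3)=|\alpha|^2+(\alpha,\pi_1)+(\alpha,\pi_2)+(\alpha,\pi_3)=2-1-1-1=-1$, a contradiction.

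There is a more serious issue in type $C$. Take $\fg_\R=\sp(2n,\R)$ with $n\geq 3$, $\alpha=\eps_2+\eps_3\in\Delta(\fp^+)$, and the simple compact roots $\pi_1=\eps_1-\eps_2$, $\pi_2=\eps_2-\eps_3$. Then $\alpha+\pi_1=\eps_1+\eps_3$ and $\alpha+\pi_2=2\eps_2$ are both in $\Delta(\fp^+)$, so $k=2$. Here $\IP{\alpha}{\pi_2^\vee}=0$, so your starting inequality $\IP{\alpha}{\pi_i^\vee}\leq -1$ fails (the $\pi_2$-string through $\alpha$ is $2\eps_3,\,\eps_2+\eps_3,\,2\eps_2$). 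More strikingly, $(\pi_1,\pi_2)=-1\neq 0$, so the orthogonality assertion in the lemma \emph{as stated in the paper} is false in this example. The conclusions $k\leq 2$ and $\alpha+\pi_1+\pi_2=\eps_1+\eps_2\in\Delta(\fp^+)$ do hold here, and these are the only facts the paper actually uses (to draw the Hasse diagram on a planar lattice). So either the paper has slightly mis-transcribed Jakobsen's Lemma~4.1, or the orthogonality clause is meant only for the simply-laced cases; in any event, your plan to handle $\sp(2n,\R)$ and $\so(2,2n-1)$ by direct inspection is the right one, but you should be aware that one of the three conclusions you are trying to prove does not survive that inspection.
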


In light of this lemma, the Hasse diagram of $\Delta(\fp^+)$ is an upward planar graph 
of order dimension two and hence can be drawn on a two-dimensional orthogonal lattice that has been rotated by a 45-degree angle. That is, we draw the Hasse diagram as a graph with vertices labelled by the positive noncompact roots and a directed edge from $\gb$ to $\gb'$ when $\gb'-\gb$ is a simple (necessarily compact) root; the edge is sometimes labelled by the simple compact root.  The unique minimal element of the poset is the simple noncompact root (drawn at the bottom) and the unique maximal element, which we denote by $\theta$, is the highest root (drawn at the top). These diagrams appear in the appendix of Jakobsen's paper \cite{Jak:83}.  We have included the Hasse diagrams for the simply-laced types in the appendix.  

\subsection{Lower-order ideals}\label{ssec:loi}
A subset $Y\subseteq \Delta(\fp^+)$ is called a \emph{lower-order ideal} if, for  $\alpha \in \Delta(\fp^+)$ and $\beta \in Y$, $\alpha \leq \beta$ implies that $\alpha \in Y$. 

For $\lambda \in \Lambda^+(\fk)$,
define the \emph{diagram} of $\lambda$ is the set
\begin{equation}
Y_{\lambda} :=\{\alpha\in \Delta(\fp^+)\mid  \IP{\lambda +\rho}{\alpha^\vee}\in \Z_{\leq 0}\},
\end{equation}
viewed as a subposet of $\Delta(\fp^+)$.

\begin{lemma}\label{L: lower order ideals}
When $\gl$ is integral, the poset $Y_{\lambda}$ is a lower order ideal of $\Delta(\fp^+)$. 
\end{lemma}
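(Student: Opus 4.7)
The plan is to reduce the claim to a single covering relation in the Hasse diagram of $\Delta(\fp^+)$ and then close that case with a one-line coroot identity. Suppose $\alpha\in\Delta(\fp^+)$ and $\beta\in Y_\lambda$ with $\alpha\leq\beta$. By the structure of the Hasse diagram recalled in \S\ref{ssec:pos-sys}, there is a saturated chain $\alpha=\gamma_0<\gamma_1<\cdots<\gamma_k=\beta$ in $\Delta(\fp^+)$ whose steps $\gamma_{j+1}-\gamma_j$ are simple compact roots. So it is enough to prove the one-step version: if $\beta=\alpha+\pi$ with $\pi\in\Pi\cap\Delta(\fk)$ and $\alpha,\beta\in\Delta(\fp^+)$, then $\beta\in Y_\lambda$ implies $\alpha\in Y_\lambda$.

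For this one-step claim I would use the elementary identity
\begin{equation*}
\alpha^\vee \;=\; \frac{|\beta|^2}{|\alpha|^2}\,\beta^\vee \;-\; \frac{|\pi|^2}{|\alpha|^2}\,\pi^\vee,
\end{equation*}
which is immediate from $\alpha=\beta-\pi$ by multiplying through by $2/|\alpha|^2$ with respect to a $W$-invariant form on $\fh^*$. Pairing with $\lambda+\rho$ yields
\begin{equation*}
\IP{\lambda+\rho}{\alpha^\vee}\;=\;\frac{|\beta|^2}{|\alpha|^2}\IP{\lambda+\rho}{\beta^\vee}\;-\;\frac{|\pi|^2}{|\alpha|^2}\IP{\lambda+\rho}{\pi^\vee},
\end{equation*}
with both length ratios strictly positive. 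The first term on the right is nonpositive since $\beta\in Y_\lambda$, and the second term is nonnegative since $\pi\in\Delta^+(\fk)$ and $\lambda\in\Lambda^+(\fk)$. Since $\lambda$ is integral, the left-hand side is an integer. Hence $\IP{\lambda+\rho}{\alpha^\vee}\in\Z_{\leq 0}$ and $\alpha\in Y_\lambda$.

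The one step worth pausing on is the reduction itself, namely that covers in $\Delta(\fp^+)$ are given by additions of simple compact roots and that the intermediate elements of a chain remain in $\Delta(\fp^+)$. Both facts follow from the cominuscule nature of the noncompact simple root $\alpha_0$: every element of $\Delta(\fp^+)$ has $\alpha_0$-coefficient exactly $1$, so the difference of any two comparable elements of $\Delta(\fp^+)$ is a sum of compact simple roots, and every partial sum still has $\alpha_0$-coefficient $1$ and hence lies in $\Delta(\fp^+)$. Combined with the standard fact that any two comparable roots are linked by a saturated chain of coverings, this puts us in the one-step situation treated above, and the resulting argument is uniform across all Hermitian types, simply laced or not.
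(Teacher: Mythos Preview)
Your proof is correct, but it takes a more elaborate route than the paper's. The paper dispenses with both the saturated-chain reduction and the explicit coroot identity: since $\alpha,\beta\in\Delta(\fp^+)$ with $\alpha\leq\beta$, the difference $\beta-\alpha$ is a nonnegative integral combination of simple compact roots, so $\langle\lambda+\rho,\beta-\alpha\rangle\geq 0$ directly from $\lambda\in\Lambda^+(\fk)$; hence $\langle\lambda+\rho,\alpha\rangle\leq\langle\lambda+\rho,\beta\rangle\leq 0$, and multiplying by the positive scalar $2/|\alpha|^2$ gives $\langle\lambda+\rho,\alpha^\vee\rangle\in\Z_{\leq 0}$. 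Your one-step reduction and coroot identity make the length ratios explicit, which is pedagogically reassuring in the non-simply-laced cases, but the paper's observation that the sign of $\langle\lambda+\rho,\alpha^\vee\rangle$ matches that of $\langle\lambda+\rho,\alpha\rangle$ collapses the whole argument to two lines with no chain decomposition needed.
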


\begin{proof}
Suppose $\alpha\in \Delta(\fp^+)$, $\beta\in Y_{\lambda}$ and $\alpha\leq \beta$.
Since $\alpha,\beta\in \Delta(\fp^+)$ and $\alpha\leq \beta$ it follows that
$\beta -\alpha$ is a sum of roots in $\Delta^+(\fk)$. Since $\lambda\in \Lambda^+(\fk)$,
it follows that $\alpha\in Y_{\lambda}$.
\end{proof}
It is easy to describe the lower order ideals in $\Delta(\fp^+)$. Define
\begin{equation}\label{eqn:script-W}
  \cW:=\{w\in W \mid -w\rho\text{ is $\Delta^+(\fk)$-dominant}\}.
\end{equation}
It follows that  $\cW=\{w\in W \mid \fn \cap \Ad(w) \fn \ \subseteq\ \fp^+\}$.   Note that the cardinality of $\cW$ is $\#\cW=\#(W/W(\fk))$, where $W(\fk)$ is the Weyl group of $\fk$.

\begin{prop}\label{prop:bijection}
There is a bijection 
\begin{equation}
\mathcal{W} \longleftrightarrow \{\mbox{lower order ideals of $\Delta(\fp^+)$}\} 
\end{equation}
given by $w\mapsto Y_{-w\rho-\rho}$. Furthermore, for any $w\in\mathcal{W}$, we have 
\begin{equation}
Y_{-w\rho-\rho}=\Delta(\fn\cap\Ad(w)\fn).
\end{equation}
\end{prop}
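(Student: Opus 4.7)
The plan is to verify the displayed equation $Y_{-w\rho-\rho}=\Delta(\fn\cap\Ad(w)\fn)$ by a direct computation, and then deduce that $w\mapsto Y_{-w\rho-\rho}$ is a bijection by invoking the fact that a Weyl group element is uniquely determined by its inversion set $\Delta(w):=\Delta^+\setminus w\Delta^+$.

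First I would compute, for $w\in\cW$ and $\alpha\in\Delta(\fp^+)$, the identity
\[
\IP{-w\rho}{\alpha^\vee}=-\IP{\rho}{(w^{-1}\alpha)^\vee},
\]
using that $w$ preserves the coroot system. Since $\IP{\rho}{\beta^\vee}$ is a positive integer when $\beta\in\Delta^+$ and a negative integer when $\beta\in\Delta^-$, the condition $\alpha\in Y_{-w\rho-\rho}$ is equivalent to $w^{-1}\alpha\in\Delta^+$, i.e.\ $\alpha\in\Delta(\fp^+)\cap w\Delta^+$. Since $w\in\cW$ gives $\Delta^+\cap w\Delta^+\subseteq\Delta(\fp^+)$, we then obtain
\[
Y_{-w\rho-\rho}=\Delta(\fp^+)\cap w\Delta^+=\Delta^+\cap w\Delta^+=\Delta(\fn\cap\Ad(w)\fn),
\]
which is the second displayed equation. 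The dominance of $-w\rho$ along $\Delta^+(\fk)$ also gives $-w\rho-\rho\in\Lambda^+(\fk)$, so by Lemma \ref{L: lower order ideals} this set is a lower order ideal and the map is well-defined.

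For injectivity, the condition $w\in\cW$ forces $\Delta^+(\fk)\subseteq\Delta(w)$, so
\[
\Delta(w)=\Delta^+(\fk)\sqcup\bigl(\Delta(\fp^+)\setminus Y_{-w\rho-\rho}\bigr);
\]
since $w$ is determined by $\Delta(w)$, distinct $w\in\cW$ yield distinct $Y_{-w\rho-\rho}$. For surjectivity, given a lower order ideal $Y\subseteq\Delta(\fp^+)$, I would form $S:=\Delta^+(\fk)\sqcup(\Delta(\fp^+)\setminus Y)$ and show it is the inversion set of some $w\in W$ by checking biconvexity: that both $S$ and $\Delta^+\setminus S=Y$ are closed under root addition within $\Delta^+$. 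Closure of $Y$ is automatic because $\fp^+$ is abelian. Closure of $S$ splits into three cases: compact+compact is immediate; compact+noncompact produces a noncompact root strictly above the noncompact summand in the partial order, hence outside the lower order ideal $Y$; and noncompact+noncompact is vacuous by abelianness. Thus $S=\Delta(w)$ for a unique $w$, the inclusion $\Delta^+(\fk)\subseteq S$ places $w\in\cW$, and reversing the opening computation gives $Y=Y_{-w\rho-\rho}$.

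The main obstacle I foresee is organizing the biconvexity verification cleanly, but each of the three cases is very short once one exploits the abelian nilradical $\fp^+$ and the definition of a lower order ideal.
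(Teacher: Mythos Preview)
Your proof is correct. The paper itself does not give a detailed argument for this proposition; it simply writes ``This is an easy consequence of \cite[\S3]{EHP:14}'' and moves on. Your approach is a direct, self-contained verification: you compute $Y_{-w\rho-\rho}=\Delta(\fp^+)\cap w\Delta^+=\Delta^+\cap w\Delta^+$ using the equivalence $\cW=\{w\mid \fn\cap\Ad(w)\fn\subseteq\fp^+\}$, and then establish the bijection by the biconvexity characterization of inversion sets, exploiting the abelianness of $\fp^+$ and the lower-order-ideal property in exactly the right places. This is presumably close in spirit to what lies behind the cited reference, but you have made the argument explicit rather than deferring it. The biconvexity check is indeed short once organized as you describe, so the obstacle you flag is not a real one.
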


\begin{proof}
This is an easy consequence of \cite[\S3]{EHP:14}.  
\end{proof}

\begin{example}\label{ex:loi} Suppose $\fg_\R=\su(4,3)$
and $\lambda+\rho=-w\rho = (2,1,-1,-2 \mid 3,0,-3)$. Then
\begin{center}
\begin{pspicture}(-7,-0.7)(7,3.2)
\uput[r](-7,1){$\fn \cap \Ad(w) \fn =\left\{\displaystyle \left(\begin{array}{cccc|ccc} 
\cdot &\cdot&\cdot&\cdot&  * & \cdot  &\cdot\\ 
\cdot &\cdot&\cdot&\cdot&  * & \cdot &\cdot\\ 
\cdot &\cdot&\cdot&\cdot&  * & * &\cdot\\ 
\cdot &\cdot&\cdot&\cdot&  * & * & \cdot \\ 
\hline
\cdot &\cdot&\cdot&\cdot&  \cdot & \cdot & \cdot\\ 
\cdot &\cdot&\cdot&\cdot&  \cdot & \cdot & \cdot\\ 
\cdot &\cdot&\cdot&\cdot&  \cdot & \cdot & \cdot
\end{array}
\right)\right\} 
$}
\uput[r](1.3,1){and}
\uput[r](2.5,1){$Y_\lambda=$}
\cnode*(5.5,0){.07}{a0}
\uput[d](6.3,0){$\alpha_4=\eps_4-\eps_5.$}
\cnode*(5,.5){.07}{a1}
\cnode*(6,.5){.07}{a2}
\cnode*(4.5,1){.07}{a3}
\cnode*(5.5,1){.07}{a4}
\cnode*(4,1.5){.07}{a6}
\psset{linecolor=mygray}
\cnode*(6.5,1){.07}{a5}
\cnode*(5,1.5){.07}{a7}
\cnode*(6,1.5){.07}{a8}
\cnode*(4.5,2){.07}{a9}
\cnode*(5.5,2){.07}{a10}
\cnode*(5,2.5){.07}{a11}
\psset{linecolor=black}
\ncline{->}{a0}{a1}
\ncline{->}{a0}{a2}
\ncline{->}{a1}{a3}
\ncline{->}{a1}{a4}
\ncline{->}{a2}{a4}
\ncline{->}{a3}{a6}
\psset{linecolor=mygray}
\ncline{->}{a2}{a5}
\ncline{->}{a3}{a7}
\ncline{->}{a4}{a7}
\ncline{->}{a4}{a8}
\ncline{->}{a5}{a8}
\ncline{->}{a6}{a9}
\ncline{->}{a7}{a9}
\ncline{->}{a7}{a10}
\ncline{->}{a8}{a10}
\ncline{->}{a9}{a11}
\ncline{->}{a10}{a11}
\psset{linecolor=black}
\uput[d](5.1,.4){\scriptsize{3}}
\uput[d](4.6,.9){\scriptsize{2}}
\uput[d](4.1,1.4){\scriptsize{1}}
\uput[d](5.9,.4){\scriptsize{5}}
\uput[d](6.4,.9){\scriptsize{6}}
\end{pspicture}
\end{center}
\end{example}

It is useful to know how to associate an element of $\cW$ to a diagram $Y_\gl$.  Given $\gl\in\gL^+(\fk)$ that is integral (for $\gD$), we show how to determine $w\in \cW$ so that $Y_\gl=Y_{-w\rho-\rho}$.  Let $C^-:=\{y\in\fh^*\mid \IP{y}{\ga}<0,\text{ for all }\ga\in\gD^+\}$, the negative Weyl chamber.  When $\gl+\rho$ is regular there is a unique $w\in W$ such that $w^{-1}(\gl+\rho)\in C^-$.  This is the correct $w$ (since $w^{-1}(\gl+\rho)$ and $-\rho$ lie in the same Weyl chamber, which implies $\gl+\rho$ and $-w\rho$ are also in the same chamber).  However, when $\gl+\rho$ is singular the situation is more subtle. The following proposition shows how to determine the correct $w$.  The importance lies in Proposition \ref{prop:red}.

\begin{prop}\label{prop:correct-w} Suppose that $\gl\in\gL^+(\fk)$ and $\gl$ is integral.  Let $w\in W$ be a minimal length element of $W$ such that $w^{-1}(\gl+\rho)$ is in the closure of $C^-$. Then the following hold.
\begin{enumerate}
\item $w\in \cW$.
\item If $\ga\in\gD^+$ and $\IP{w^{-1}(\gl+\rho)}{\ga}=0$, then $w\ga>0$.
\item $Y_\gl=Y_{-w\rho-\rho}$.
\end{enumerate}
Such $w$ is unique.
\end{prop}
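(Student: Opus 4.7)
The plan is to analyze the coset structure in $W$ that contains every candidate $w$. First, since every $W$-orbit meets $\overline{C^-}$ in a unique point, the element $\mu := v^{-1}(\lambda+\rho)$ is independent of the choice of $v$ in $\Omega := \{v \in W \mid v^{-1}(\lambda+\rho) \in \overline{C^-}\}$, and $\Omega$ is a single right coset $v_0 W_\mu$ of the stabilizer $W_\mu$. Because $\mu$ is antidominant, $W_\mu$ is a reflection subgroup of $W$ whose positive system is $\Delta_\mu^+ := \{\alpha \in \Delta^+ \mid \IP{\mu}{\alpha} = 0\}$.

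Next I would invoke the standard Coxeter-theoretic fact that each coset $v_0 W_\mu$ contains a unique minimal length element $w$, characterized by the property $w(\Delta_\mu^+) \subseteq \Delta^+$ (equivalently $w(\Delta_\mu^-) \subseteq \Delta^-$, since $w$ permutes $\Delta$). The uniqueness of $w$ in the proposition is then immediate, as is part (2): if $\alpha \in \Delta^+$ satisfies $\IP{w^{-1}(\lambda+\rho)}{\alpha} = \IP{\mu}{\alpha} = 0$, then $\alpha \in \Delta_\mu^+$, so $w\alpha \in \Delta^+$.

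For (1), take $\beta \in \Delta^+(\fk)$. Since $\lambda$ is $\Delta^+(\fk)$-dominant integral and $\IP{\rho}{\beta^\vee} \geq 1$ for every positive root, we have $\IP{\lambda+\rho}{\beta^\vee} > 0$, hence $\IP{\mu}{w^{-1}\beta} > 0$. But $\mu \in \overline{C^-}$ satisfies $\IP{\mu}{\gamma} \leq 0$ for every $\gamma \in \Delta^+$, so we must have $w^{-1}\beta \in \Delta^-$. Regularity of $\rho$ then gives $\IP{-w\rho}{\beta} > 0$ for all $\beta \in \Delta^+(\fk)$, which is precisely the condition $w \in \cW$.

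For (3), Proposition \ref{prop:bijection} combined with the regularity of $\rho$ yields $Y_{-w\rho-\rho} = \Delta(\fn\cap\Ad(w)\fn) = \{\alpha \in \Delta(\fp^+) \mid w^{-1}\alpha \in \Delta^+\}$, so I would verify $Y_\lambda = Y_{-w\rho-\rho}$ by a case analysis on the sign of $\IP{\lambda+\rho}{\alpha}$ for $\alpha \in \Delta(\fp^+)$. The strict cases $<0$ and $>0$ are forced by antidominance of $\mu$ just as in (1). The borderline case $\IP{\lambda+\rho}{\alpha} = 0$ is the main obstacle: here $w^{-1}\alpha \in \Delta_\mu$ and its sign is not determined by $\mu$ alone. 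The fix is statement (2): the inclusion $w(\Delta_\mu^+) \subseteq \Delta^+ \cap w(\Delta_\mu) = \Delta_{\lambda+\rho}^+$ is an equality by cardinality, so $\alpha \in \Delta_{\lambda+\rho}^+$ forces $w^{-1}\alpha \in \Delta_\mu^+ \subseteq \Delta^+$. All three cases align to give $Y_\lambda = Y_{-w\rho-\rho}$, completing the proof.
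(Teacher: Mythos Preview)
Your proof is correct and follows essentially the same route as the paper's. The only organizational difference is that you invoke the standard Coxeter fact about minimal coset representatives up front (packaging uniqueness and part~(2) together), whereas the paper re-derives part~(2) by the length inequality $\ell(w)<\ell(ws_\alpha)$ and then deduces uniqueness at the end from~(3) via the bijection of Proposition~\ref{prop:bijection}. For part~(3) you make explicit use of the description $Y_{-w\rho-\rho}=\{\alpha\in\Delta(\fp^+)\mid w^{-1}\alpha\in\Delta^+\}$ from Proposition~\ref{prop:bijection}, while the paper phrases the same case analysis in terms of $\lambda+\rho$ and $-w\rho$ lying in a common chamber closure; the borderline case $\IP{\lambda+\rho}{\alpha}=0$ is handled identically in both arguments via~(2).
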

\begin{proof} Since $w^{-1}(\gl+\rho)$ and $-\rho$ both lie in the closure of $C^-$, $\gl+\rho$ and $-w\rho$ lie in the same (closure of) Weyl chamber.  Therefore, $-w\rho\in\gL^+(\fk)$, so $w\in\cW$.  Suppose that $\IP{w^{-1}(\gl+\rho)}{\ga}=0$ for some positive root $\ga$.  Then $(ws_\ga)^{-1}(\gl+\rho)=s_\ga(w^{-1}(\gl+\rho))=w^{-1}(\gl+\rho)$ (where $s_\ga$ is the reflection in $\ga$).  By the minimal length hypothesis for $w$, $\length(w)<\length(ws_\ga)$, which implies $w\ga>0$.  

The inclusion $Y_\gl\supseteq Y_{-w\rho-\rho}$ is clear since $\gl+\rho$ and $-w\rho$ lie in the same closure of Weyl chamber.  Therefore, suppose that $\gb\in Y_\gl$.  If $\IP{\gl+\rho}{\gb}\neq 0$, then $\IP{-w\rho}{\gb}<0$.  If $\IP{\gl+\rho}{\gb}=0$ there are two possibilities. Case (i): $w^{-1}\gb>0$.  Then $\IP{-w\rho}{\gb}=-\IP{\rho}{w^{-1}\gb}<0$, therefore $\gb\in Y_{-w\rho-\rho}$.  Case (ii): $w^{-1}\gb<0$.  Then $\ga:=-w^{-1}\gb\in\gD^+$ is orthogonal to $w^{-1}(\gl+\rho)$,  However, part (2) says that $-\gb=w\ga>0$, which is a contradiction, so Case (ii) does not occur.  Part (3)  is now proved.

Uniqueness follows from (3).
\end{proof}

\subsection{Antichains and width of lower-order ideals}\label{ssec:chains}
An \emph{antichain} in any poset $Y$ is a subset of $Y$ consisting of pairwise noncomparable elements.  The \emph{width} of the poset, denoted by $\width(Y)$, is the greatest cardinality of  antichains in $Y$. 

\begin{rem} Example \ref{ex:loi} illustrates the fact that for $\f{su}(p,q)$ the width of $Y_\lambda=Y_{-w\rho-\rho}$ is the maximal rank of a matrix in $\fn \cap \Ad(w) \fn$.  See the proof of Lemma \ref{lem:rank} for the analogous statement when $\fg_\R=\f{sp}(2n,\R)$.
\end{rem}

Recall that two roots $\gg_1,\gg_2\in\gD$ are called \emph{strongly orthogonal} if neither $\gg_1\pm\gg_2$ is a root. When $\gg_1,\gg_2\in\gD(\fp^+)$, $\gg_1+\gg_2$ is never a root (as $\fp^+$ is abelian), therefore, they are strongly orthogonal if and only if $\gg_1-\gg_2$ is a root.   Thus, $\gg_1,\gg_2$ are not comparable implies they are strongly orthogonal.  (The converse does not hold; for example, in $\f{so}(2,2n-1)$, $(\eps_1+\eps_n)-(\eps_1-\eps_n)$ is a sum of roots, but is not a root.)  The following lemma therefore holds.

\begin{lemma}\label{lem:width-so}
Every antichain of roots in $\Delta(\fp^+)$ is a set of pairwise strongly orthogonal roots.\hfill{$\square$}
\end{lemma}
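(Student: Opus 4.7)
The plan is to observe that the lemma is essentially an immediate consequence of the discussion in the preceding paragraph, so the work is to turn that discussion into a clean proof.

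First I would reduce to the pairwise statement: it suffices to show that any two incomparable elements $\gg_1,\gg_2\in\gD(\fp^+)$ are strongly orthogonal. By the paragraph above the lemma, since $\fp^+$ is abelian we have $\gg_1+\gg_2\notin\gD$, so strong orthogonality reduces to showing that $\gg_1-\gg_2\notin\gD$.

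Next I would argue by contradiction. Assume $\gg_1-\gg_2\in\gD$; after swapping $\gg_1$ and $\gg_2$ if needed, I may assume $\gg_1-\gg_2\in\gD^+$. I split on whether this positive root is compact or noncompact. If $\gg_1-\gg_2\in\gD(\fp^+)$, then $\gg_1=\gg_2+(\gg_1-\gg_2)$ realizes an element of $\gD(\fp^+)$ as a sum of two elements of $\gD(\fp^+)$, which is impossible because $[\fp^+,\fp^+]=0$. Hence $\gg_1-\gg_2\in\gD^+(\fk)$, which by definition of the partial order gives $\gg_2<\gg_1$, contradicting incomparability. Either way we reach a contradiction, so $\gg_1-\gg_2\notin\gD$ and the two roots are strongly orthogonal.

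There is no real obstacle here; the only subtlety worth highlighting is the distinction that strong orthogonality and incomparability are not equivalent in general (the example with $(\eps_1+\eps_n)$ and $(\eps_1-\eps_n)$ in $\f{so}(2,2n-1)$ shows that the converse direction fails), so the proof must use incomparability, not just strong orthogonality, when splitting into the compact/noncompact cases above.
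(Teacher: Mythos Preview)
Your proof is correct and follows essentially the same line as the paper, which dispatches the lemma in the paragraph preceding it: since $\fp^+$ is abelian, $\gg_1+\gg_2$ is never a root, so strong orthogonality reduces to $\gg_1-\gg_2\notin\gD$, and if $\gg_1-\gg_2$ were a root it would force comparability. Your compact/noncompact case split is a small detour---the noncompact case is in fact vacuous, since each root in $\gD(\fp^+)$ contains the unique noncompact simple root with coefficient exactly one, so $\gg_1-\gg_2$ lies in the $\Z$-span of $\Pi\cap\gD(\fk)$ and, if a root, is automatically compact---but the argument is sound either way.
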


\begin{cor}
For every $w\in \cW$, $\operatorname{width}(Y_{-w\rho-\rho})\leq r$, where $r$ is the real rank of $\fg_\R$.
\end{cor}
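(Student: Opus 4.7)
The plan is to combine Lemma~\ref{lem:width-so} with the classical Harish--Chandra construction of strongly orthogonal roots in $\Delta(\fp^+)$. First I would observe that $Y_{-w\rho-\rho} \subseteq \Delta(\fp^+)$, so any antichain in $Y_{-w\rho-\rho}$ is in particular an antichain in $\Delta(\fp^+)$, and hence by Lemma~\ref{lem:width-so} it is a set of pairwise strongly orthogonal positive noncompact roots.

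Next I would appeal to the standard fact, due to Harish--Chandra, that the maximal cardinality of a set of pairwise strongly orthogonal roots in $\Delta(\fp^+)$ equals $r = \rank_\R(G_\R/K_\R)$. Concretely, one can produce such a maximal set $\{\gamma_1, \ldots, \gamma_r\}$ inductively by letting $\gamma_1 = \theta$ be the highest root and choosing $\gamma_{i+1}$ to be a maximal element in $\Delta(\fp^+)$ that is strongly orthogonal to $\gamma_1, \ldots, \gamma_i$. The vectors $X_{\gamma_i} + X_{-\gamma_i}$ (suitably normalized) then span a maximal abelian subspace of $\fp_\R$, whose dimension is by definition $r$. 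Combining these two steps gives
\begin{equation*}
\operatorname{width}(Y_{-w\rho-\rho}) \;\leq\; \operatorname{width}(\Delta(\fp^+)) \;\leq\; r.
\end{equation*}

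The only nontrivial ingredient is the second inequality, and it is not really an obstacle since it is a well-known fact about Hermitian symmetric pairs; if a self-contained argument is preferred, one can verify it case-by-case using the Hasse diagrams in the appendix (and in \cite{Jak:83}), where the Harish--Chandra strongly orthogonal roots form a visibly maximal antichain. I would briefly remark that equality $\operatorname{width}(\Delta(\fp^+)) = r$ in fact holds, realized by the Harish--Chandra set $\{\gamma_1,\ldots,\gamma_r\}$, which is itself an antichain, so the upper bound $r$ in the corollary is attained when $w$ is the longest element of $\cW$ (i.e.\ when $Y_{-w\rho-\rho} = \Delta(\fp^+)$).
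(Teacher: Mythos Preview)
Your main argument is correct and is essentially the same as the paper's: both combine Lemma~\ref{lem:width-so} with the well-known fact that a maximal family of pairwise strongly orthogonal roots in $\Delta(\fp^+)$ has cardinality $r$ (the paper cites \cite{Johnson80} for this).

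However, your closing remark is wrong. You claim that $\operatorname{width}(\Delta(\fp^+))=r$ always holds and that the Harish--Chandra set $\{\gamma_1,\ldots,\gamma_r\}$ is itself an antichain. This fails in the non-simply laced cases. For $\fg_\R=\so(2,2n-1)$ the poset $\Delta(\fp^+)$ is a chain, so its width is $1$, while $r=2$; the Harish--Chandra roots $\eps_1-\eps_2$ and $\eps_1+\eps_2$ are strongly orthogonal but comparable, since $2\eps_2$ is a sum of positive roots. Likewise for $\fg_\R=\sp(2n,\R)$ one has $r=n$ but $\operatorname{width}(\Delta(\fp^+))=\lfloor (n+1)/2\rfloor$ (see Table~\ref{tab:width}). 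This is exactly the phenomenon flagged just before Lemma~\ref{lem:width-so}: strong orthogonality does not imply incomparability. The remark is not needed for the corollary, so simply drop it.
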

\begin{proof} 
It is well-known (see \cite{Johnson80}) that $r$ is the cardinality of a maximal set of mutually strongly orthogonal roots in $\gD(\fp^+)$.
\end{proof}

\subsection{The number of lower-order ideals of a given width}
The number of lower order ideals of width $m$ is determined in this section.  This is important for our counting arguments in Section~\ref{sec:proof}.

\begin{prop}\label{prop:width}
For $0\leq m \leq r$, the number of lower-order ideals in $\Delta(\fp^+)$ of given width $m$ is given by the last column of Table~\ref{tab:width}. 
\end{prop}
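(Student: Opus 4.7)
The plan is to verify the proposition case-by-case across the Cartan classification of irreducible noncompact Hermitian symmetric pairs. In each case I will use Proposition~\ref{prop:bijection} to identify the lower-order ideals of $\Delta(\fp^+)$ with familiar combinatorial objects, characterize the width via incomparability of positive noncompact roots, and count the ideals of each fixed width; as a uniform sanity check, the totals will be compared with $\#\cW = \#(W/W(\fk))$.

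For the classical types the recipe is uniform. When $\fg_\R = \su(p,q)$, the poset $\Delta(\fp^+)$ is isomorphic to the product $[p] \times [q]$, so its lower-order ideals are Young diagrams inside a $p \times q$ rectangle. By Lemma~\ref{lem:width-so} and the incomparability description in type $A$, the width of such a diagram equals the side length of its Durfee square; selecting the $m$ rows and $m$ columns that meet the main diagonal of the Durfee square and then filling the remaining NE/SW regions with arbitrary partitions gives $\binom{p}{m}\binom{q}{m}$ ideals of width $m$. For $\so^*(2n)$, the poset of noncompact roots $\eps_i + \eps_j$ ($i<j$) is a strictly shifted staircase, whose lower-order ideals are strict partitions; width is a shifted Durfee-type statistic, and an analogous selection argument gives the count. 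For $\sp(2n,\R)$, the incomparability criterion for short roots spelled out in the example after Theorem~\ref{thm:main} reduces the count to nested index sequences $i_1 < \cdots < i_m < j_m < \cdots < j_1$ (with the long roots $2\eps_i$ sitting along the diagonal of the shifted shape); this again yields a binomial-type formula. For $\so(2,n)$ the real rank is $2$, so only $m \in \{0,1,2\}$ occur and a brief direct enumeration from the Hasse diagram suffices.

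For the two exceptional Hermitian pairs, $\Delta(\fp^+)$ has only $16$ elements ($E_6/\Spin(10)\U(1)$, $r=2$) and $27$ elements ($E_7/E_6\,\U(1)$, $r=3$) respectively, so I would enumerate all lower-order ideals directly from Jakobsen's Hasse diagrams and group them by width. The main obstacle is the uniformity of bookkeeping rather than any substantial combinatorial difficulty: the Durfee-square style counts are classical in each simply-laced classical type, the non-simply-laced types reduce to the same pattern via the shifted-staircase picture, and the two exceptional cases are small enough for direct enumeration. The row-sum check against $\#(W/W(\fk))$ eliminates most routes to error and locks in Table~\ref{tab:width}.
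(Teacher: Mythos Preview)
Your identification of lower-order ideals in $\Delta(\fp^+)$ with Young diagrams in a $p\times q$ rectangle is correct for $\su(p,q)$, but the key claim that the width of such a diagram equals the side of its Durfee square is false, and this breaks your count. Concretely, take $p=q=2$ and the hook $\mu=(2,1)$, i.e.\ $Y=\{(1,1),(1,2),(2,1)\}$. The pair $\{(1,2),(2,1)\}$ is an antichain, so $\width(Y)=2$, whereas the Durfee square of $(2,1)$ has side $1$. More generally, the pushing-down argument in the paper shows that $\width(Y)\geq m$ if and only if $Y$ contains the \emph{staircase} antichain $\{(m,1),(m-1,2),\ldots,(1,m)\}$, equivalently the staircase partition $(m,m-1,\ldots,1)$; this is a strictly weaker condition than containing the $m\times m$ box. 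Your formula $\binom{p}{m}\binom{q}{m}$ counts diagrams with Durfee square exactly $m$, and for $p=q=2$ gives $(1,4,1)$, whereas Table~\ref{tab:width} requires $(1,3,2)$. Note that both sequences sum to $\binom{4}{2}=6$, so your row-sum sanity check against $\#\cW$ does not detect the error.

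The same misidentification would propagate to your treatment of $\so^*(2n)$ and $\sp(2n,\R)$, where the width is again governed by a staircase-containment condition rather than a shifted Durfee square. The paper handles $\su(p,q)$ and $\so^*(2n)$ by first establishing the canonical staircase antichain and then setting up a recursion in $n$ (splitting on whether the largest coordinate of $-w\rho$ sits in the first or last block), which it solves by induction to obtain $\binom{p+q}{m}-\binom{p+q}{m-1}$ and $\binom{n}{m}$ respectively; $\sp(2n,\R)$ is then read off from the poset isomorphism with $\so^*(2n+2)$. Your overall case-by-case strategy and your direct enumeration for the exceptional and $\so(2,N)$ cases are fine, but for the three main classical families you need to replace the Durfee-square characterization of width by the correct staircase one and redo the count.
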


\begin{table}[H] 
\caption{The number of  lower-order ideals of $\Delta(\fp^+)$ of a given width}
\label{tab:width}
\centering
\renewcommand{\arraystretch}{1.5}
\setlength\tabcolsep{10pt}
\begin{tabular}{lll}
\toprule
$\fg_\R$ & $\card(\cW)$ & $\card\{w\in \cW \mid \operatorname{width}(Y_{-w\rho-\rho})=m\}$\\  
\midrule    
$\mathfrak{su}(p,q)$ &
${p+q \choose p}$& ${p+q \choose m} -{p+q \choose m-1}$\\[5pt] 
   $\mathfrak{sp}(2n,\R)$ & $2^{n}$ & ${n+1 \choose m}$ if $m<\frac{n+1}{2}$,\\
  & &  $\frac{1}{2}{n+1 \choose m}$ if $m=\frac{n+1}{2}$, and  \\
  & &  $0$ if $m>\frac{n+1}{2}$ \\[5pt]
   $\mathfrak{so}^*(2n)$ & $2^{n-1}$ & ${n \choose m}$ if $m<\frac{n}{2}$ and \\
  & &  $\frac{1}{2}{n \choose m}$ if $m=\frac{n}{2}$ \\
$\mathfrak{so}(2,2n-1)$ &  $2n$ & $1$, $2n-1$, $0$ \\[5pt]
$\mathfrak{so}(2,2n-2)$ & $2n$ & $1$, $n$, $n-1$\\ 
 $\mathfrak{e}_{6(-14)}$ & 27 & $1$, $6$, $20$\\[5pt] 
$\mathfrak{e}_{7(-25)}$ & 56 & $1$, $7$, $27$, $21$ \\ 
\bottomrule
\end{tabular}
\end{table}

The proof of this proposition is a  case-by-case argument.

\subsubsection{\it Proof for $\fg_\R=\su(p,q)$}  
Write $n=p+q$.  Take $\gD^+$ to be the usual positive system of roots for Type $A_{n-1}$, so $\gD^+=\{\eps_i-\eps_j\mid 1\leq i<j\leq n\}$ and $\gD(\fp^+)=\{\eps_i-\eps_j\mid 1\leq i\leq p<j\leq n\}$. 

Suppose that $\gb_1=\eps_{i_1}-\eps_{j_1}$ and $\gb_2=\eps_{i_2}-\eps_{j_2}$ are noncomparable roots in $\gD(\fp^+)$.  Then $i_1\leq p<j_1$ and $i_2\leq p< j_2$ and we may assume that $i_1<i_2$.  Then $\gb_1-\gb_2=(\eps_{i_1}-\eps_{i_2})+(\eps_{j_2}-\eps_{j_1})$.  The first is a positive root, so the second is necessarily a negative root (otherwise $\gb_1\geq\gb_2$), so $j_1<j_2$.  We may conclude that antichains take the form 
\begin{equation*}
\{\eps_{i_s}-\eps_{j_s}\mid s=1,2,\dots,m\}, \text{ where }i_1<i_2<\dots i_m\leq p<j_1<j_2<\dots<j_m.
\end{equation*}
If $Y$ has width $m$, then $Y$ contains a maximal antichain of the above form.  Since $Y$ is a lower order ideal we may subtract positive compact roots and stay in $Y$, therefore $Y$ contains the antichain
\begin{equation}\label{eqn:anti-supq}
\{\eps_{p-s+1}-\eps_{p+s}\mid s=1,2,\dots,m\},
\end{equation}
that is $i_1<i_2<\dots i_m= p<j_1<j_2<\dots<j_m$ are consecutive.  (It is useful to view this antichain in the Hasse diagram for $\gD(\fp^+)$: it is the horizontal row of nodes $m-1$ rows up from the minimal root. See the Appendix for the Hasse diagram for $\fs\fu(4,3)$.)  Note that $m$ is necessarily less than $r=\min\{p,q\}$.  We conclude that \emph{all} lower order ideals $Y$ of width $m$ have a maximal antichain in common, namely the one in (\ref{eqn:anti-supq}).

Let $\wp_{p,q}(m)=\#\{w\in\cW\mid \width(Y_{-w\rho-\rho})=m\}$.  (We interpret this to be $0$ if $p$ or $q$ is $0$.)  Notice that $\wp_{p,q}$ is symmetric in $p,q$. We assume $p\leq q$.

\noindent {\bf Claim}: $\wp_{p,q}(m)=\begin{cases}\wp_{p-1,q}(m)+\wp_{p,q-1}(m-1), &\text{if }m<p \\ \wp_{p,q-1}(p)+\wp_{p,q-1}(p-1), &\text{if }m=p. 
\end{cases}$

For our argument it is useful to keep in mind the Hasse diagram for $\gD(\fp^+)$.
Note that  $\rho=(\frac{n-1}{2},\frac{n-3}{2},\dots,-\frac{n-1}{2})$ and for $w\in \cW$ the entries of $-w\rho$ are the same as those in $\rho$ and the first $p$ entries decrease as do the last $q$ entries.  Therefore, $-w\rho$ takes the form 
\begin{center}
(a) $\,(\frac{n-1}{2},\dots\,|\,\dots) \text{ or (b) }\, (\dots\dots\,|\,\frac{n-1}{2},\dots).$
\end{center}
To prove the claim we consider the contributions to $\wp_{p,q}(m)$ from the two forms of $-w\rho$.    The contribution from (a) is $\wp_{p-1,q}(m)$, since no roots $\eps_1-\eps_j$ appear in $Y_{-w\rho-\rho}$.  The contribution from (b) is 
\begin{equation*}
\begin{cases}  \wp_{p,q-1}(p-1), &\text{ if }m<p  \\
\wp_{p,q-1}(p)+\wp_{p,q-1}(p-1), &\text{ if }m=p.
\end{cases}
\end{equation*}
This is easily understood from the Hasse diagram of $\gD(\fp^+)$.  The claim follows since $\wp_{p-1,q}(m)=0$ when $m=p$.

Now we show that for $p\leq q$
\begin{equation*}
\wp_{p,q}(m)=\binom{p+q}{m}-\binom{p+q}{m-1}, \text{ for }m=1,2,\dots,p,
\end{equation*}
using induction on $n=p+q$.  When $n=2$ this is immediate.  There are several cases.
\begin{enumerate}
\item[(i)] 
Case $m<p$:
\begin{align*}
\wp_{p,q}(m)&=\wp_{p-1,q}(m)+\wp_{p,q-1}(m-1), \text{ by the claim,} \\
  &=\left(\binom{n-1}{m}-\binom{n-1}{m-1}\right)+\left(\binom{n-1}{m-1}-\binom{n-1}{m-2}\right), \text{ by induction,}  \\
  &=\binom{n-1}{m}-\binom{n-1}{m-2}  \\
  &=\binom{n}{m}-\binom{n}{m-1}, \text{ an easily checked identity}.
\end{align*}
\item[(ii)]  Case $m=p<q$:
\begin{align*}
\wp_{p,q}(p)&=\wp_{p,q-1}(p)+\wp_{p,q-1}(p-1), \text{ by the claim,} \\
  &=\left(\binom{n-1}{p}-\binom{n-1}{p-1}\right)+\left(\binom{n-1}{p-1}-\binom{n-1}{p-2}\right), \text{ by induction,}  \\
  &=\binom{n-1}{p}-\binom{n-1}{p-2}  \\
  &=\binom{n}{p}-\binom{n}{p-1}, \text{ same identity}.
\end{align*}   
\item[(iii)]  Case $m=p=q$:
\begin{align*}
\wp_{p,p}(p)&=\wp_{p,p-1}(p)+\wp_{p,p-1}(p-1), \text{ by the claim,} \\
  &=\wp_{p,p-1}(p)+\wp_{p-1,p}(p-1), \text{ by symmetry,} \\
  &=\binom{2p-1}{p-1}-\binom{2p-1}{p-2}, \text{ first term is $0$,}  \\
 &=\binom{2p-1}{p}-\binom{2p-1}{p-2}  \\  
 &=\binom{2p}{p}-\binom{2p}{p-1}, \text{ same identity}.
\end{align*}  
\end{enumerate}

\subsubsection{\it Proof for $\fg_\R=\so^*(2n)$}\label{ssec:so*} This case is is quite similar to the $\f{su}(p,q)$ case above. The set of positive compact roots is $\gD^+(\fk)=\{\ep_i-\ep_j \mid 1\leq i< j\leq n\}$ and the poset of positive noncompact roots is $\gD(\fp^+)=\{\ep_i+\ep_j \mid 1\leq i< j\leq n\}$.  Suppose $\gg_1=\ep_{i_1}+\ep_{j_1}$ and $\gg_2=\ep_{i_2}+\ep_{j_2}$ are two noncomparable roots in $\gD(\fp^+)$.  We may assume that $i_1<i_2, i_1<j_1$ and $i_2<j_2$.  Then in the expression
$$\gg_1-\gg_2=(\ep_{i_1}-\ep_{i_2}) + (\ep_{j_1}-\ep_{j_2})
$$
the first term  is a positive root, therefore $\gg_1,\gg_2$ are noncomparable if and only if the second term is a negative root.  This holds if and only if $j_2<j_1$ (so $i_1<i_2<j_2<j_1$).  We may conclude from this that the antichains of length $m$ in any lower order ideal  are 
$$
\{\ep_{i_s}+\ep_{j_s}\mid s=1,\dots,m\}, \text{ where }  i_1<\dots<i_m<j_m<\dots,<j_1.
$$
For $w\in\cW$,  $-w\rho$ has decreasing entries, so if $\width(Y_{-w\rho-\rho})=m$ then we may assume that $i_1<\dots<i_m<j_m<\dots,j_1$ are \emph{consecutive}  ending in $j_1=n$:
$\{\ep_{n-2m+s} + \ep_{n+1-s}\mid  s=1,\dots,m\}.$
Therefore, whenever a lower order ideal has width $m$ it contains this maximal antichain, which appears in the Hasse diagram as the horizontal row of nodes appearing $2m-2$ rows up from the bottom.
For $n=6$, the Hasse diagram is shown in the appendix.
 
Let $\wp_n(m)$ be the number of $w\in \cW$ such that $\width(Y_{-w\rho-\rho})=m$.  Since $\rank_\R(\f{so}^*(2n))=\lfloor \frac{n}{2}\rfloor$, $\wp_n(m)=0$ when $m>\lfloor \frac{n}{2}\rfloor$.

\begin{claim} We have
$$
\wp_n(m)=
  \begin{cases}  \binom{n}{m}, &\text{if } m<\frac{n}{2}  \\
                 \frac12\binom{n}{n/2}, &\text{if }m= \frac{n}{2}.
  \end{cases}
$$  
\end{claim}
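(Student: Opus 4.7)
The plan is to prove the claim by induction on $n$, via a recursion derived by splitting the parameterizing subset $S\subseteq\{1,\ldots,n-1\}$ according to whether $n-1\in S$. Recall that $w\in \cW$ corresponds to $S$ via $-w\rho=(t_1,\ldots,t_n)$, the decreasing rearrangement of $\{0,1,\ldots,n-1\}$ with the values in $S$ negated; and that $\width(Y_{-w\rho-\rho})\geq m$ iff $t_{n-2m+s}+t_{n+1-s}\leq 0$ for $s=1,\ldots,m$.

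Case A ($n-1\notin S$): then $t_1=n-1$ and $t_1+t_j>0$ for every $j$, so no pair $(1,j)$ lies in $Y$; reindexing $i\mapsto i-1$, the ideal $Y$ becomes a lower-order ideal in $\gD(\fp^+)$ for $\so^*(2(n-1))$ associated to the same subset $S\subseteq\{1,\ldots,n-2\}$, with the same width, giving contribution $\wp_{n-1}(m)$. Case B ($n-1\in S$): write $S'=S\setminus\{n-1\}$; then $t_n=-(n-1)$, every pair $(i,n)$ lies in $Y$, and $Y\cap\binom{\{1,\ldots,n-1\}}{2}=Y_{S'}$. A nested antichain in $Y$ of size $m$ either uses $j_1=n$, reducing to a nested antichain of size $m-1$ in $Y_{S'}$ supported on indices $\geq 2$, or lies entirely in $Y_{S'}$. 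Thus $\width(Y)=\max\bigl(1+w_{[2,n-1]}(Y_{S'}),\,\width(Y_{S'})\bigr)$, where $w_{[2,n-1]}(Y_{S'})$ denotes the maximum size of a nested antichain in $Y_{S'}$ avoiding index $1$.

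The key observation is that $w_{[2,n-1]}(Y_{S'})\in\{\width(Y_{S'})-1,\width(Y_{S'})\}$, with strict drop only when the consecutive antichain realizing $\width(Y_{S'})$ is forced to begin at $i_1=1$ and the index set $\{2,\ldots,n-1\}$ is too small to hold any nested antichain of maximal size. Since this antichain begins at $i_1=(n-1)-2\width(Y_{S'})+1$, the obstruction arises precisely when $n$ is odd and $\width(Y_{S'})=(n-1)/2$, the rank of $\so^*(2(n-1))$; one then checks that $\{2,\ldots,n-1\}$ has only $n-2$ elements, too few to support a nested antichain of size $(n-1)/2$, forcing $w_{[2,n-1]}(Y_{S'})=(n-1)/2-1$. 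Combining Cases A and B yields the recursion
\begin{equation*}
\wp_n(m)=\wp_{n-1}(m)+\wp_{n-1}(m-1)+\epsilon_n(m),
\end{equation*}
with $\epsilon_n(m)=\wp_{n-1}((n-1)/2)$ if $n$ is odd and $m=(n-1)/2$, and $\epsilon_n(m)=0$ otherwise. A direct verification via Pascal's rule together with the identity $\tfrac12\binom{n}{n/2}=\binom{n-1}{n/2-1}$ (for even $n$) then confirms that the stated closed form satisfies this recursion and the trivial base cases at $n=1,2$.

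The main obstacle is the analysis in Case B: correctly expressing $\width(Y)$ via the auxiliary quantity $w_{[2,n-1]}(Y_{S'})$ and then identifying the parity-dependent regime in which it strictly drops below $\width(Y_{S'})$. Once this is isolated---and it turns out to contribute only at the single slot $n$ odd, $m=(n-1)/2$---all remaining steps are straightforward algebra.
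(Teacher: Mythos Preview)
Your proposal is correct and follows essentially the same approach as the paper. Both arguments split according to whether $-w\rho$ begins with $n-1$ (your Case~A) or ends with $-(n-1)$ (your Case~B), derive the identical recursion $\wp_n(m)=\wp_{n-1}(m)+\wp_{n-1}(m-1)$ with the extra term $\wp_{n-1}(m)$ exactly when $n$ is odd and $m=(n-1)/2$, and then verify the closed form by induction using Pascal's rule. Your treatment of Case~B via the auxiliary quantity $w_{[2,n-1]}(Y_{S'})$ makes explicit what the paper handles more tersely by appealing to the Hasse diagram; note also that since $w_{[2,n-1]}(Y_{S'})\geq \width(Y_{S'})-1$ always, your displayed maximum is in fact just $1+w_{[2,n-1]}(Y_{S'})$.
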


\begin{proof}[Proof of claim]
Since $\rho=(n-1,\dots 2,1,0)$ and $-w\rho$ has decreasing entries,  $-w\rho$ has the form   
\begin{equation*} 
\text{(a) }(n-1,\dots\dots)\quad \text{ or\quad  (b) } (\dots\dots,-(n-1)).
\end{equation*}
As in the $\f{su}(p,q)$ case we count the contributions to $\wp_n(m)$ from the two cases. It is again useful to consider the Hasse diagram.  The contribution from case (a) is $\wp_{n-1}(m)$, since $\eps_1+\eps_j$ is never in $Y_{-w\rho-\rho}$.  The contribution from case (b) is $\wp_{n-1}(m-1)$, when $m\neq\frac{n-1}{2}$, and is $\wp_{n-1}(m-1)+\wp_{n-1}(m)$, when $m=\frac{n-1}{2}$. Thus, we have
\begin{equation*}
\wp_n(m)=\begin{cases}
\wp_{n-1}(m)+\wp_{n-1}(m-1), & \text{if }m\neq\frac{n-1}{2}  \\
\wp_{n-1}(m)+\wp_{n-1}(m-1)+\wp_{n-1}(m), & \text{if }m=\frac{n-1}{2}.  
\end{cases}
\end{equation*}
We now prove the claim by induction on $n$.  There are three cases.
\begin{itemize}
\item[(i)] $\displaystyle m<\frac{n-1}{2}$:
$$
\wp_{n}(m)=\wp_{n-1}(m)+\wp_{n-1}(m-1)=\binom{n-1}{m}+\binom{n-1}{m-1}=\binom{n}{m}.
$$

\item[(ii)] $\displaystyle m=\frac{n-1}{2}$:
\begin{align*}
\wp_{n}(m)&=2\wp_{n-1}\left(\frac{n-1}{2}\right)+\wp_{n-1}\left(\frac{n-1}{2}-1\right)\\
&=\binom{n-1}{(n-1)/2}+\binom{n-1}{(n-3)/2}=\binom{n}{(n-1)/2}.
\end{align*}

\item[(iii)] 
$\displaystyle m=\frac{n}{2}$:
$$
\wp_{n}(m)=\wp_{n-1}\left(\frac{n}{2}\right)+\wp_{n-1}\left(\frac{n}{2}-1\right)=0+\wp_{n-1}\left(\frac{n}{2}-1\right)=\binom{n-1}{n/2-1}=\frac12\binom{n}{n/2}.
$$
\end{itemize}
\end{proof}

\subsubsection{\it Proof for $\fg_\R=\sp(2n,\R)$} \label{sec:width-sp}
The poset $\Delta(\fp^+)$ is isomorphic to $\Delta(\fp'^+)$,
where $\fg'_\R=\so^*(2n+2)$. Therefore, this case is already done.  Note that the real rank of $\sp(2n,\R)$ is $n$ whereas the real rank of $\so^*(2n+2)$ is $\lfloor \frac{n+1}{2}\rfloor$. This explains the entry $0$ for $m>\frac{n+1}{2}$
in the third column of the Table~\ref{tab:width}. 

Using the isomorphism with the poset $\Delta(\fp'^+)$ and the description of antichains given in \S\ref{ssec:so*}, we see that if $\width(Y)=m$, then there is an maximal antichain having one of  the forms
\begin{align*}
&\left\{\ep_{n-m+1-s}+\ep_{n-m+1+s}\mid s=1,\dots,m-1\right\}\cup\left\{2\ep_{n-m+1}\right\}\text{ or }\\
&\left\{\ep_{n-m+1-s}+\ep_{n-m+s}\mid s=1,\dots,m\right\}.
\end{align*}

\subsubsection{\it Proof for $\fg_\R=\so(2,2n-1)$}
The poset of $\Delta(\fp^+)$ is a linear graph.  The result is immediate because $\#\gD(\fp^+)=2n-1$.

\subsubsection{\it Proof for $\fg_\R=\so(2,2n-2)$} 
The poset $\gD(\fp^+)$ in this case is quite simple and is illustrated below. Noting that the only antichain of width $2$ is $\{\eps_1-\eps_2,\eps_1+\eps_2\}$, the lower order ideals of widths $0,1$ and $2$ are easily counted.  We get $\wp_n(0)=1, \wp_n(1)=n,$ and $\wp_n(2)=n-1$.
For $n=6$, the Hasse diagram of $\Delta(\fp^+)$ is shown in the appendix.

\subsubsection{\it Proof for 
 $\fg_\R=\mathfrak{e}_{6(-14)} $}
The poset $\Delta(\fp^+)$ for $\fg_\R=\mathfrak{e}_{6(-14)} $ is shown in the Appendix. A lower-order ideal $Y\subseteq \Delta(\fp^+)$ has width $\geq 1$ if and only if $\alpha_1\in \Delta(\fp^+)$
and width $\geq 2$ if and only if $\beta_1,\beta_2\in \Delta(\fp^+)$.
A quick inspection shows that there are exactly $6$ lower-order ideals of $\Delta(\fp^+)$ that contain $\alpha_1$, but not both $\beta_1$ and $\beta_2$.
Thus, there are exactly $6$ posets  of $\Delta(\fp^+)$ that have width $1$. Since the total number of lower-order ideals of $\Delta(\fp^+)$ is $\card(\cW)=27$,
the number of lower-order ideals of $\Delta(\fp^+)$ width $2$ is $27-1-6=20$. 

\subsubsection{\it Proof for 
 $\fg_\R=\mathfrak{e}_{7(-25)} $}
The poset $\Delta(\fp^+)$ for $\fg_\R=\mathfrak{e}_{7(-25)} $ is shown in the Appendix.   A lower-order ideal $Y\subseteq \Delta(\fp^+)$ has width $\geq 1$ if and only if $\alpha_7\in \Delta(\fp^+)$,
width $\geq 2$ if and only if $\beta_1,\beta_2\in \Delta(\fp^+)$,
and width $\geq 3$ if and only if $\gamma_1,\gamma_2,\gamma_3\in \Delta(\fp^+)$.
Again, a quick inspection shows that there are exactly $7$ lower-order ideals of $\Delta(\fp^+)$ that contain $\alpha_1$, but not both $\beta_1$ and $\beta_2$. A more tedious inspection shows that there are exactly $27$ lower-order ideals of $\Delta(\fp^+)$
that contain $\beta_1$ and $\beta_2$ but not all three roots $\gamma_1$, $\gamma_2$, and $\gamma_3$.
Thus, there are exactly $7$ lower-order ideals  of $\Delta(\fp^+)$ of width $1$,  exactly $27$ lower-order ideals  of width $2$,
and $56-1-7-27=21$ lower-order ideals of  width $3$.

This concludes the proof of Proposition~\ref{prop:width}. \hfill $\Box$

\subsection{Half-integral case}\label{ssec:half-2} We need to consider highest weight Harish-Chandra modules $L(\gl)$ when $\gl$ is   half-integral.  As we shall see, all the information needed is obtained by reducing to the integral case.

Let $\gl\in\gL^+(\fk)$ be half-integral.  This means that $\IP{\gl+\rho}{\gb^\vee}\in \frac12+\Z$, for some $\gb\in\gD(\fp^+)$.  We consider the integral root system
\begin{equation*}
\gD_\gl=\{\ga\in\gD\mid \IP{\gl+\rho}{\ga^\vee}\in\Z\}.
\end{equation*}
Let $W_\gl$ denote the Weyl group of $\gD_\gl$ and  fix the positive system $\gD_\gl^+:=\gD^+\cap\gD_\gl$.  Let $\fg_\gl$ be the complex semisimple Lie algebra with root system $\gD_\gl$.  Then there is a triangular decomposition
\begin{equation}\label{eqn:tri-lamda}
\fg_\gl=\fp_\gl^-+\fk+\fp_\gl^+,
\end{equation}
where $\gD(\fp_\gl^\pm):=\gD(\fp^\pm)\cap\gD_\gl$.  Note that $\gD(\fk)\subset\gD_\gl$, since $\gl$ is $\gD(\fk)$-integral, so $\fk_\gl\simeq\fk$.   Therefore, we may apply  \S\ref{ssec:loi} and \S\ref{ssec:chains} to $\gD(\fp_\gl^+)\subset\gD_\gl^+$ in place of $\gD(\fp^+)\subset\gD^+$.   In particular, $Y_\gl$ is a lower order ideal in $\gD_\gl$ and defining $\cW_\gl$ in $W_\gl$ there is a bijection $\cW_\gl\leftrightarrow\{\text{lower order ideals in }\gD(\fp_\gl^+)\}$.  Note that for $\ga,\gb\in\gD(\fp_\gl^+)$, $\ga\leq\gb$ is the \emph{same} in $\gD$ (with respect to $\gD^+$) as in $\gD_\gl$ (with respect to $\gD_\gl^+$), since $\gb-\ga$ is a sum of compact roots and $\gD(\fk)\subset \gD_\gl$.   Therefore, $Y_\gl$ is the same whether defined in $\gD$ or in $\gD_\gl$.

There are two examples we return to in \S\ref{ssec:half}.  
\begin{itemize}
\item[(a)] $\fg_\R=\f{sp}(2n,\R),n\geq 2$.  If $\gl=(\gl_1,\dots,\gl_n)$ is half-integral, then  $\gl_j\in\frac12+\Z$, all $j$.  Therefore, the integral root system is
\begin{equation*}
 \gD_\gl=\{\eps_j\pm\eps_k\mid 1\leq j<k\leq n\},
\end{equation*}
a root system of type $D_n$. Observe that the complex Lie algebra $\fg_\gl$ corresponding to $\gD_\gl$ is of type $D_n$ and has a real form $\fg_{\gl,\R}\simeq \f{so}^*(2n)$ of Hermitian type.   The set of positive noncompact roots is $\gD(\fp_\gl^+)=\gD(\fp^+)\cap\gD_\gl=\{\eps_j+\eps_k:1\leq j<k\leq n\}$.
\item[(b)] $\fg=\f{so}(2,2n-1), n\geq 2$.  Then 
\begin{equation*}
 \gD_\gl=\{\pm\eps_1\}\cup\gD(\fk),
\end{equation*}
a root system of type $A_1\times B_{n-1}$.  The corresponding Lie Algebra $\fg_\gl$ has a real form $\fg_{\gl,\R}\simeq \f{sl}(2,\R)\times\f{so}(2n-1)$ of Hermitian type.
\end{itemize}

\section{Associated varieties and highest weight Harish-Chandra  modules}\label{sec:av-cells}  In this section we review some background information  on associated varieties, cells and the Springer correspondence.  

\subsection{Associated varieties and cells for $(\fg,K)$-modules} \label{ssec:AV}

Assume that $G_\R$ is a connected semisimple Lie group with finite center.  Let $\fg_\R=\fk_\R\oplus\fp_\R$ be a Cartan decomposition; since $G_\R$ has finite center, there is a maximal compact subgroup $K_\R$ with Lie algebra $\fk_\R$.  Let $\fg=\fk\oplus\fp$ be the complexified Cartan decomposition and $K$ the connected complex algebraic group containing $K_\R$ with Lie algebra $\fk$.   Any irreducible $K_\R$ representation extends to an algebraic representation of $K$.  Therefore, $(\fg,K)$ is a pair as in \cite[\S 1]{Vogan91} and the Harish-Chandra module of an admissible representation of $G_\R$ is a $(\fg,K)$-module.  The action of the algebraic group $K$ is needed for definition of associated variety.  We follow the definition of associated variety of a finitely generated $(\fg,K)$-module as given in \cite{Vogan91}. Note that we are including finite covers of linear groups, such as the metaplectic cover of $Sp(2n,\R)$ which occurs for half integral highest weights.  However, groups with infinite center, such as the universal cover of $G_\R=Sp(2n,\R)$, are not included in our discussion.  

Suppose that $(\fg,K)$ is a pair as above and  $X$ is  a $(\fg,K)$-module of finite length.   The associated variety of $X$ is defined by starting with a $K$-invariant `good' filtration $\{X_n\}$ of $X$.  Then $\Gr X$ is a module for $\Gr \cU(\fg)\simeq S(\fg)\simeq P(\fg^*)$, and by $K$-invariance of the filtration, is a module for $S(\fg/\fk)$.  The associated variety of $X$ is defined as the support of $\Gr X$:
\begin{equation*}
\AV(X):=\supp(\Gr X)\subseteq (\fg/\fk)^*.
\end{equation*}
Identifying $(\fg/\fk)^*$ with $\fp$ by the Killing form, $\AV(X)$ is a $K$-stable subvariety of $\fp$.  In fact, $\AV(X)\subseteq\fp\cap\cN$, where $\cN$ is the nilpotent cone in $\fg$.  Therefore,
\begin{equation}\label{eqn:av}
 \AV(X)=\cup\, \bar{\cO_\ga},
\end{equation}
a union of closures of nilpotent $K$-orbits in $\fp$ and we write this union in a way that no $\bar{\cO}_\ga$ is contained in any other.

When $X$ is irreducible, the $\cO_\ga$ occurring in (\ref{eqn:av}) are all equidimensional and all have the same $G$-saturation, that is, there is a complex nilpotent orbit $\cO^\C$ such that 
\begin{equation}\label{eqn:av-ann}
 \cO^\C=G\cdot\cO_\ga, \text{ for all }\ga.
\end{equation}
It is a fact that  $\bar{\cO^\C}$ is the associated variety of the annihilator of $X$.  See \cite{Vogan91} for details.
 
 Suppose $X$ and $Y$ are irreducible Harish-Chandra modules having infinitesimal character of Harish-Chandra parameter $\gL\in\fh^*$.  We say that $X\lesssim Y$ when $X$ occurs as a subquotient of  $Y\otimes F$, with $F$ some finite-dimensional $G$-subrepresentation of the tensor algebra of $\fg$.  We say  $X\sim Y$ when  $X\lesssim  Y$ and $Y\lesssim X$; this generates an equivalence relation on the set of irreducible representations of infinitesimal character $\gL$.  The \emph{Harish-Chandra cells} are defined to be the equivalence classes.  Since $X\lesssim Y$ implies $\AV(X)\subseteq\AV(Y)$, we see that the associated variety is constant on any given cell.

Now assume that $\gL$ is regular and integral and assume also that $G_\R$ is linear.  Then the coherent continuation representation of the Weyl group $W$ of $\fg$ is defined on the Grothendieck group  of Harish-Chandra modules of infinitesimal character $\gL$.  This gives a representation of $W$ for each cell as follows.   Fix an irreducible $X_0$ (infinitesimal character $\gL$) and let $\fC$ be the cell of $X_0$, that is, the cell containing $X_0$.  Set
\begin{equation*}
W_{\fC}:=\spa_\C\{X\mid X\lesssim X_0\}\text{ and }
U_{\fC}:=\spa_\C\{X\mid X\lesssim X_0\text{ and }X\nsim X_0\}
\end{equation*}
Then both $W_\fC$ and $U_\fC$ are $W$-stable under coherent continuation.  The \emph{cell representation} for $\fC$ is the quotient $V_\fC:=W_\fC/U_\fC$.  We may view 
\begin{equation*}
 V_\fC=\sum_{X\in\fC}\C\cdot X
\end{equation*}
(but keeping in mind that it is really a quotient).  See \cite{BarbaschVogan83} for cells and \cite[Ch.~7]{Vogan81} for coherent continuation.

Much is known about cells and the cell representations.  We have already mentioned that the associated variety is constant on cells, as is the associated variety of annihilator.  An important property of the cell representations is that if $\bar{\cO^\C}$ is the associated variety of annihilator, as in (\ref{eqn:av-ann}), then the $W$ representation $\pi(\cO^\C,1)$ corresponding to $\cO^\C$ under the Springer correspondence occurs in $V_\fC$ (\cite[Cor.~14.11]{Vogan82-IC4}) and is a special representation (in the sense of Lusztig). 

What makes this useful for us is that there are easy combinatorial procedures known (for the classical groups) and tables (for the exceptional groups) that (1) give the Springer correspondence and (2) determine which $W$-representations are special.  See \cite[\S13.1-13.3]{Carter85}.  The Atlas of Lie Groups software (\cite{atlas}) also gives this information for Lie algebras of rank up to at least 8.  

Most of the relevant data for us is contained in Table \ref{tab:springer}.

\subsection{Highest weight Harish-Chandra  modules}\label{ssec:hwhc}
A \emph{highest weight Harish-Chandra module} for our group $G_\R$ (having finite center) is a $(\fg,K)$-module (for the pair described in    \ref{ssec:AV}) having a vector $x_\gl^+\in X$ satisfying
\begin{enumerate}
\item[(i)] $x_\gl^+$ is a weight vector for $\fh$, a compact Cartan subalgebra of $\fg$, of weight $\gl\in\fh^*$, 
\item[(ii)] $\fn\cdot x_\gl^+=0$, where $\fn$ is the nilradical of some Borel subalgebra $\fb=\fh\oplus\fn\subset\fg$, and
\item[(iii)] $X=\cU(\fg)\cdot x_\gl^+$.
\end{enumerate}
Suppose $X$ is an infinite-dimensional irreducible highest weight Harish-Chandra module for $G_\R$.  Then, by \cite[\S~1-2]{HC55}, $G_\R$ is of Hermitian type.  In this case, there is a triangular decomposition  $\fg=\fp^-+\fk+\fp^+$ as in  \S\ref{ssec:pos-sys}.  It is also shown in \cite{HC55} that $\fh$ and $\fb$ in (i) and (ii) are given by our choices in \S\ref{ssec:pos-sys}.  Therefore, $\gl\in\gL^+(\fk)$ and $X$ contains the $K$-type $F(\gl)$ having highest weight $\gl$.  Furthermore,
\begin{equation*}
 X=\cU(\fg)x_\gl^+=\cU(\fp^-)F(\gl). 
\end{equation*}
We conclude that $X$ is the unique irreducible quotient of the generalized Verma module $\cU(\fg)\!\!\underset{\;\fk+\fp^+}{\otimes}\!F(\gl)$.  Therefore, $X$ is also the irreducible quotient $L(\gl)$ of the full Verma module
\begin{equation}\label{eqn:verma}
  M(\gl)=\cU(\fg)\underset{\fb}{\otimes}\C_\gl.
\end{equation}
The infinitesimal character is $\gl+\rho$.

We recall that if $X=L(\gl)$ is the irreducible quotient of a Verma module for our choice of $\fb$ (as in (\ref{eqn:verma})), then 
$\gl\in\gL^+(\fk)$ and $\IP{\gl+\rho}{\gb}\in\R$, for all $\gb\in\gD$ (see \cite[p.~100]{EHW:83}).

Our condition that $G_\R$ has finite center implies, by \cite{Lepowsky73}, that $X=L(\lambda)$ is the Harish-Chandra module of a continuous representation of $G_\R$.

\subsection{Associated varieties of highest weight Harish-Chandra  modules}
Suppose $X$ is any irreducible highest weight Harish-Chandra  module for $G_\R$ of highest weight $\gl$. Then (as in \S\ref{ssec:hwhc}) $X=\cU(\fp^-)F(\gl)$.  Using the usual filtration of the enveloping algebra we obtain a filtration
\begin{equation}\label{eqn:filtration}
 X_n:=\cU_n(\fp^-)F(\gl).
\end{equation} 
One easily checks that it is a good filtration and is $K$-invariant.  It is also $\fk+\fp_+$-invariant (so also $\fb$-invariant).  We conclude that $\AV(X)\subseteq \left(\fg/\fk+\fp^+\right)^*\simeq\fp^+$.

It is well-known that the $K$-orbits in $\fp^+$ have a particularly nice form.  Letting $r=\rank_\R(\fg_\R)$, there are $r+1$ such orbits.  They may be written as follows.  Let $\{\gg_1,\dots,\gg_r\}$ be a maximal set of mutually strongly orthogonal long roots in $\fp^+$ (necessarily of cardinality $r$).  Any two sets of mutually strongly orthogonal long roots having the same cardinality are conjugate  under the Weyl group $W(\fk)$. The $K$-orbits in $\fp^+$ are
\begin{equation}\label{eqn:K-orbits}
\begin{split}  
  &\cO_0=\{0\}\text{ and}   \\
  &\cO_k=K\cdot(X_{\gg_1}+\cdots+X_{\gg_k}), \, k=1,2,\dots,r,
\end{split}
\end{equation}
where  $X_\gg$ is a root vector for $\gg$.  Note that the choice  (and numbering) of strongly orthogonal roots doesn't matter, by the $W(\fk)$-conjugacy mentioned above.  These orbits satisfy  the inclusions 
\begin{equation*}
  \bar{\cO_k}\subseteq \bar{\cO_{k+1}}.
\end{equation*}
We conclude that the associated variety of any highest weight Harish-Chandra  module is the closure of exactly one $\cO_k$.

The following table collects relevant data on the $K$-orbits in $\fp^+$.

\begin{table}[H] 
\caption{Data on orbits and the Springer correspondence}
\label{tab:springer}
\renewcommand{\arraystretch}{1.25}
\setlength\tabcolsep{6pt}
\begin{tabular}{lllll}
\toprule
$\fg_\R$ & $\dim(\O_k)$ & label of $\O_k^\C$ & special? & $\dim(\pi(\O_k^\C,1))$\\  
\midrule    
$\mathfrak{su}(p,q)$ & $k(p+q-k)$ & $[2^k,1^{p+q-2k}]$ & yes & ${p+q \choose p} - {p+q \choose k-1}$ \\[5pt]
\midrule  
$\mathfrak{sp}(2n,\R)$ & $\frac{1}{2}k(2n-k+1)$ & $[2^k,1^{2n-2k}]$ & yes  & ${n\choose \frac{k}{2}}$ if $k$ is even\\
 &   &   & no  & ${n\choose \frac{k-1}{2}}$ if $k$ is odd, $k<n$\\[5pt]
 &   &  & yes  & ${n\choose \frac{n-1}{2}}$ if $k=n$ is odd, $k=n$\\[5pt]
\midrule  
$\mathfrak{so}^*(2n)$ & $k(2n-2k-1)$ & $[2^{2k},1^{2n-4k}]$ & yes & ${n \choose k}$ if $k\not=\frac{n}{2}$\\ 
  &   &   & yes & $\frac{1}{2}{n \choose \frac{n}{2}}$ if $k=\frac{n}{2}$\\ 
\midrule  
$\mathfrak{so}(2,2n-1)$ &  $2n-2$ & $[2^2,1^{2n-3}]$ & no & $n-1$ \\ 
  &  $2n-1$ & $[3,1^{2n-2}]$ & yes & $n$ \\[5pt]
\midrule  
$\mathfrak{so}(2,2n-2)$ & $2n-3$ & $[2^2,1^{2n-4}]$ & yes & $n$\\ 
             & $2n-2$ & $[3,1^{2n-3}]$ & yes & $n-1$\\ 
\midrule  
 $\mathfrak{e}_{6(-14)}$ &  $0$ & $0$ & yes & $1$\\ 
    &  $11$ & $A_1$ & yes & $6$\\  
    &  $16$ & $2A_1$ & yes & $20$ \\[5pt]  
\midrule  
$\mathfrak{e}_{7(-25)}$ & $0$ & $0$ & yes & $1$\\
  & $17$ & $A_1$ & yes & $7$\\
  & $26$ & $2A_1$ & yes & $27$\\
  & $27$ & $(3A_1)''$ & yes & $21$\\
\bottomrule
\end{tabular}
\end{table}

\begin{rem}
By Jantzen's irreducibility criterion or by \cite[Lem. 3.17]{EHW:83}, the generalized Verma module $\cU(\fg)\!\underset{\;\fk+\fp^+}{\otimes}F(\gl)$ can be reducible only when $\gl$ is integral or half-integral.  In the integral case we may assume that $G_\R$ is linear and in the half-integral case we may assume that $G_\R$ is the double cover of a linear group.
\end{rem}

\begin{rem}\label{rem:ex-av}
If $X$ is an \emph{irreducible} generalized Verma module $\cU(\fg)\!\!\underset{\;\fk+\fp^+}{\otimes}\!F(\gl)$ (as in the previous remark as well as in the case of holomorphic discrete series representations), then it follows easily from the definitions that $\AV(X)=\bar{\cO_r}=\fp^+$.  One may also see from the definitions that if one member of a cell is a highest weight Harish-Chandra module, then the cell consists entirely of highest weight Harish-Chandra modules.  Also, if the associated variety of a Harish-Chandra module $X$ is contained in $\fp^+$, then $X$ is a highest weight module (\cite[Prop.~B1]{BarchiniZierau16}).  If $\AV(X)=\cO_0=\{0\}$, then $X$ is finite-dimensional, so $\fC_0=\{\C\}$ is the only cell (for infinitesimal character $\rho$) with associated variety $\cO_0$,
\end{rem}

See \cite{NishiyamaOchiaiTaniguchi01} for more on associated varieties of highest weight Harish-Chandra modules.

Now consider any highest weight module $L(\gl)$.  We may apply the the theory of associated varieties of highest weight modules (as in \cite{BorhoBrylinski85} and \cite{Jos:92}).  This tells us that the associated varieties are unions of orbital varieties, which we write as
\begin{equation*}
  \nu(w):=\bar{B\cdot(\fn\cap \Ad(w)\fn)},\quad w\in W.
\end{equation*}
As in \cite{BorhoBrylinski85} we write $L_w=L(-w\rho-\rho), w\in W$.  Then the support of $L_w$ is the Schubert variety $Z_w=\bar{B\cdot w\fb}$ in the flag variety $\fB$ of $G$.  By \cite[Prop. 6.11 ]{BorhoBrylinski85} the associated variety of $L_w$ contains the moment map image  $\mu(\bar{T_\fB^*Z_w})=\nu(w)$, so
\begin{equation}\label{eqn:avBB}
  \AV(L_w)\supset \nu(w).
\end{equation}
(However, the associated variety is sometimes larger and of greater dimension.)  

Our first goal is a formula for associated varieties when $L_w$ is a Harish-Chandra module, that is, when $w\in\cW$.  Our description of $\AV(L_w)$ may be given in terms of some $\cO_k$ \emph{or} in terms of orbital varieties.  Theorem \ref{thm:main} expresses the associated variety in terms of $K$-orbits in $\fp^+$, however the orbital variety picture will allow us to use the combinatorics of posets discussed in Section \ref{sec:posets}.  
The following lemma puts this into perspective.
\begin{lemma}[see \cite{BarchiniZierau16}] 
\begin{enumerate}
\item[(1)] For each $j=0,1,\dots,r,$  we have $\bar{\cO_j}=\nu(w)$ for some $w\in \cW$.  
\item[(2)] If $\nu(w)\subseteq\fp^+$, then $\nu(w)=\bar{\cO_j}$,  for some $j=0,1,\dots,r$, and $w$ is necessarily in $\cW$.
\end{enumerate}
\end{lemma}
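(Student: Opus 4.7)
The plan is to prove Part~(2) first, then deduce Part~(1) by an explicit construction and appeal to Part~(2).

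\textbf{Part~(2).} Since $\fn\cap\Ad(w)\fn$ is automatically contained in $\nu(w)$, the hypothesis $\nu(w)\subseteq\fp^+$ forces $\fn\cap\Ad(w)\fn\subseteq\fp^+$, and by the characterization of $\cW$ given immediately after (\ref{eqn:script-W}) this yields $w\in\cW$. For the identification with some $\bar{\cO_j}$, note that $\nu(w)$ is an irreducible $B$-stable closed subvariety of the finite stratified space $\fp^+\cap\cN=\cO_0\sqcup\cdots\sqcup\cO_r$; irreducibility selects a unique index $j$ with $\nu(w)\cap\cO_j$ Zariski-dense in $\nu(w)$, giving $\nu(w)\subseteq\bar{\cO_j}$. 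To upgrade to equality, I would apply the orbital-variety dimension formula of Spaltenstein--Borho--Brylinski, $\dim\nu(w)=\tfrac12\dim(G\cdot X)$ for generic $X\in\fn\cap\Ad(w)\fn$, together with the Hermitian-symmetric doubling identity $\dim_\C(G\cdot\cO_j)=2\dim_\C\cO_j$; the latter is readily verified from the complex Jordan labels and orbit dimensions in Table~\ref{tab:springer}. Combining these yields $\dim\nu(w)=\dim\cO_j$, hence $\nu(w)=\bar{\cO_j}$.

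\textbf{Part~(1).} For each $j\in\{0,1,\dots,r\}$ I would construct a lower-order ideal $Y_j\subseteq\gD(\fp^+)$ whose generic element belongs to $\cO_j$; Proposition~\ref{prop:bijection} then supplies a unique $w_j\in\cW$ with $\gD(\fn\cap\Ad(w_j)\fn)=Y_j$, and Part~(2) delivers $\nu(w_j)=\bar{\cO_j}$. In the matrix realizations of $\fp^+$, the choice of $Y_j$ is a concrete ``staircase'' of positive noncompact roots whose span contains a generic rank-$j$ matrix: rectangular blocks for $\su(p,q)$, symmetric blocks for $\sp(2n,\R)$, and skew-symmetric blocks for $\so^*(2n)$; the special orthogonal families $\so(2,2n-1)$ and $\so(2,2n-2)$ are handled by direct enumeration since their posets are very small. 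For the exceptional types $\fe_{6(-14)}$ and $\fe_{7(-25)}$ a suitable $Y_j$ can be produced directly from the Hasse diagrams in the appendix by reading off a sequence of lower-order ideals whose generic element traverses each $\cO_j$ in turn.

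\textbf{Main obstacle.} The delicate step is verifying that the generic element of $\bigoplus_{\alpha\in Y_j}\fg_\alpha$ lands in the intended orbit $\cO_j$, rather than in some $\cO_{j'}$ with $j'\neq j$. This appears to require case-by-case analysis: a K\"onig-type bipartite matching computation for $\su(p,q)$; generic-rank computations for symmetric and skew-symmetric matrices in the $\sp(2n,\R)$ and $\so^*(2n)$ cases; and explicit inspection in the two exceptional Hasse diagrams. The other nontrivial input is the orbital-variety dimension formula itself, whose justification rests on the Lagrangian nature of $\cO^G\cap\fn$ inside the Kostant--Kirillov symplectic variety $\cO^G$, together with the doubling identity $\dim_\C(G\cdot\cO_j)=2\dim_\C\cO_j$ that is special to the Hermitian symmetric setting.
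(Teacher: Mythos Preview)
The paper does not actually prove this lemma; it is stated with a citation to \cite{BarchiniZierau16} and no proof is given. So there is no ``paper's own proof'' to compare against, and your proposal should be judged on its own merits.

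Your argument for Part~(2) is correct and is essentially the standard one: $\nu(w)\subseteq\fp^+$ forces $\fn\cap\Ad(w)\fn\subseteq\fp^+$, hence $w\in\cW$; irreducibility of $\nu(w)$ picks out a single stratum $\cO_j$; and then the orbital-variety dimension formula $\dim\nu(w)=\tfrac12\dim(G\cdot\nu(w))$ together with the doubling identity $\dim(G\cdot\cO_j)=2\dim\cO_j$ (valid for nilpotent $K$-orbits in $\fp$ by Kostant--Rallis, or checked from Table~\ref{tab:springer} as you say) forces equality.

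For Part~(1), your case-by-case construction would work, but it is unnecessarily laborious given what you have already established in Part~(2). A much shorter uniform argument is available: the same two dimension facts you invoked show that $\bar{\cO_j}$ is an irreducible closed subvariety of $\overline{\cO_j^\C}\cap\fn$ of dimension $\tfrac12\dim\cO_j^\C$, which is exactly the dimension of every irreducible component of $\overline{\cO_j^\C}\cap\fn$ (Spaltenstein). Hence $\bar{\cO_j}$ is itself an orbital variety, so $\bar{\cO_j}=\nu(w)$ for some $w\in W$, and then Part~(2) gives $w\in\cW$. This eliminates the ``main obstacle'' you identified (verifying that the generic element of each hand-built $Y_j$ lands in the correct orbit) entirely.
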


The proof of Theorem \ref{thm:main} will be largely a counting argument using Tables \ref{tab:width} and \ref{tab:springer} along with the following important fact:
\begin{equation}\label{eqn:key}
  \width(\gD(\fn\cap \Ad(w)\fn))=m\implies \AV(L_w)\supset \bar{\cO_m}.
\end{equation}  
This fact follows from (\ref{eqn:avBB}), (\ref{lem:width-so}) and (\ref{eqn:K-orbits}).

\section{Proof of Theorems \ref{thm:main} and \ref{thm:GK}}\label{sec:proof}
Theorem \ref{thm:main} is proved first for integral infinitesimal character.   We begin by showing that it suffices to prove the theorem for infinitesimal character $\rho$.  The proof for $X=L_w  ~(w\in \cW)$ is then carried out separately for the simply laced  and non-simply laced cases.  Then we consider the half-integral case.  We first prove Theorem \ref{thm:GK}, then  reduce the half-integral case to the integral case.

\subsection{The translation principle}\label{sec:rho}  The reduction of the proof of Theorem \ref{thm:main} from integral infinitesimal character to infinitesimal character $\rho$ is accomplished using the translation principle.  This principle is well-known; we use \cite[Ch.~7]{Humphreys08} as a convenient reference; see also \cite{Jantzen79}.  A translation functor $T$ applied to $X$ is tensoring by a finite-dimensional $\fg$-representation followed by projection to the constituents of a generalized infinitesimal character.  Therefore, one has $\AV(T(X))\subseteq\AV(X)$.  The following proposition is essentially contained in \cite[Cor.~3.3]{BX}; we give the proof as it plays a key role for us.

\begin{prop}\label{prop:red}  Let $L(\gl)$ be a highest weight Harish-Chandra module having integral infinitesimal character $\gl+\rho$.  Choose $w\in \cW$ such that $Y_\gl=Y_{-w\rho-\rho}$ (as in Prop.~\ref{prop:bijection}).  The following hold.
\begin{enumerate}
\item $\gl+\rho$ and $-w\rho$ lie in the closure of the same Weyl chamber, namely, $C=\{y\in\fh^*:\IP{y}{w\ga}<0, \text{ for }\ga\in\gD^+\}$.
\item There is a translation functor $T$ such that $T(L_w)=L(\gl)$.
\item $\AV(L(\gl))=\AV(L_w)$.
\end{enumerate}
\end{prop}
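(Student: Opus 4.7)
For Part~(1), the plan is to apply Proposition~\ref{prop:correct-w} directly. That proposition characterizes $w$ as the unique minimal length element of $W$ satisfying $w^{-1}(\gl+\rho) \in \overline{C^-}$. Since $w^{-1}(-w\rho) = -\rho$ also lies in $\overline{C^-}$ (in fact in $C^-$ itself, because $\rho$ is strictly dominant), both $\gl+\rho$ and $-w\rho$ belong to $w\,\overline{C^-}$. The identity $wC^-=C$ follows by unwinding: $y \in wC^-$ iff $\IP{w^{-1}y}{\ga}<0$ for every $\ga\in\gD^+$, i.e., iff $\IP{y}{w\ga}<0$ for every $\ga\in\gD^+$.

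For Part~(2), I invoke Jantzen's translation principle as formulated in \cite[Ch.~7]{Humphreys08}. The three hypotheses must be checked: (a) the weight $-w\rho$ is regular, which is immediate since $\rho$ is strictly dominant and $w^{-1}$ permutes roots; (b) $-w\rho$ and $\gl+\rho$ lie in the closure of the same Weyl chamber, which is Part~(1); and (c) the difference $(\gl+\rho)-(-w\rho) = \gl + \rho + w\rho$ is an integral weight, which holds because $\gl$ is integral by hypothesis and $\rho+w\rho = 2\rho - (\rho-w\rho)$ lies in the root lattice (since $\rho-w\rho$ is a nonnegative integer combination of positive roots). With these in hand, the translation functor $T = T_{-w\rho}^{\gl+\rho}$ is defined and sends the irreducible module $L_w$ at the regular point $-w\rho$ to the irreducible $L(\gl)$ at $\gl+\rho$, by the standard formula: the identity element is trivially the shortest representative of its coset modulo the stabilizer of $\gl+\rho$.

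For Part~(3), one inclusion is essentially formal. Since $T$ is obtained by tensoring $L_w$ with a finite-dimensional $\fg$-module $F$ and then projecting to the summand with the appropriate generalized infinitesimal character, and since $\AV(L_w\otimes F)=\AV(L_w)$ while passing to a summand can only shrink the support, we get $\AV(L(\gl)) \subseteq \AV(L_w)$. For the reverse inclusion I would use that translation preserves Gelfand--Kirillov dimension: $T(L_w)=L(\gl)\neq 0$ implies $\GKD(L(\gl))=\GKD(L_w)$. Because $L(\gl)$ and $L_w$ are both highest weight Harish-Chandra modules, their associated varieties lie in the totally ordered chain~(\ref{eqn:chain}) of $K$-orbit closures in $\fp^+$, whose dimensions strictly increase; equality of GK dimensions therefore forces equality of the associated varieties.

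The main obstacle is the final step, namely invoking preservation of Gelfand--Kirillov dimension for translation between integral infinitesimal characters (regular to possibly singular). This is classical but not trivial. As an alternative, one could apply the ``reverse'' translation $T' = T_{\gl+\rho}^{-w\rho}$ to $L(\gl)$ and argue that $L_w$ appears as a composition factor of $T'(L(\gl))$, which would yield $\AV(L_w)\subseteq\AV(L(\gl))$ directly; this avoids citing the GK-dimension fact but requires a more careful composition series analysis on the singular side.
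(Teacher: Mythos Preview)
Your argument for Part~(1) is fine and essentially matches the paper's (the paper argues directly from $\gD^+(\fk)$-dominance together with $Y_\gl=Y_{-w\rho-\rho}$, while you route through Proposition~\ref{prop:correct-w}; both work).

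Part~(2) has a genuine gap. The delicate point is precisely the singular case: when $\gl+\rho$ lies on a wall, $T(L_w)$ could be zero, and one must verify it is not. Your justification, ``the identity element is trivially the shortest representative of its coset modulo the stabilizer of $\gl+\rho$,'' is not the right condition. In the standard formulation (e.g.\ \cite[Thm.~7.9]{Humphreys08}), what is needed is that $\gl+\rho$ lies in the \emph{upper closure} $\widehat{C}$ of the chamber $C$; equivalently, that $w$ (not the identity) is the shortest element in its coset $wW_{\gl+\rho}^\circ$, i.e.\ that $w\ga>0$ whenever $\ga\in\gD^+$ is orthogonal to $w^{-1}(\gl+\rho)$. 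This is exactly condition~(2) of Proposition~\ref{prop:correct-w}, and it is not automatic --- it is the reason the paper singles out the \emph{minimal length} $w$ among all elements carrying $\gl+\rho$ into $\overline{C^-}$. You have the right proposition in hand but have not invoked the relevant part of it.

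For Part~(3), your primary route through preservation of GK~dimension is correct but, as you note, imports a nontrivial fact. Your alternative~(b) is closer to what the paper does, but the paper avoids any composition-series analysis by using the adjointness of translation functors: from $\Hom_\fg(L_w,T'(L(\gl)))\simeq\Hom_\fg(T(L_w),L(\gl))=\Hom_\fg(L(\gl),L(\gl))\neq 0$ one gets an embedding $L_w\hookrightarrow T'(L(\gl))$, hence $\AV(L_w)\subseteq\AV(L(\gl))$ immediately. This is cleaner than either of your proposed routes.
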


\begin{proof}
(1) As $L(\gl)$ and $L_w$ are highest weight Harish-Chandra modules $\gl+\rho$ and $-w\rho$ are $\gD^+(\fk)$-dominant.  Since $Y_\gl=Y_{-w\rho-\rho}$, $\gl+\rho$ and $-w\rho$ are nonnegative on the same roots in $\gD(\fp)$.

(2) The translation functor $T$ is  tensoring by the finite-dimensional $\fg$-representation of extreme weight $\gl +\rho +w\rho$, then projecting to infinitesimal character $\gl+\rho$.  When $\gl +\rho$ is nonsingular the statement is easy.  In the singular case $T(L_w)$ is potentially $0$.  However, our choice of $w$ guarantees $T(L_w)=L(\gl)$.  This follows from  \cite[Thm. 7.9]{Humphreys08} as follows.  One needs $\gl +\rho\in\widehat{C}$, the upper closure of the chamber $C$.  Part (1) says that $\gl +\rho \in\bar{C}$, so suppose that $\IP{\gl +\rho}{\ga}=0$.  For $\gl +\rho \in \widehat{C}$ we need $w\ga>0$.  This is (2) of Proposition \ref{prop:correct-w}.

(3) The translation functor $T$ goes from infinitesimal character $\rho$ to infinitesimal character $\gl+\rho$.  There is also a translation functor $T'$ going the other way.  Since $T(L_w)=L(\gl)$, we have  $\AV(L(\gl))\subseteq\AV(L_w)$.  For the opposite inclusion, we use the adjointness property of translation functors \cite[\S 7.2]{Humphreys08}:
\begin{equation*}
\Hom_\fg(X,T'(Y))\simeq\Hom_\fg(T(X),Y).
\end{equation*}
With $X=L_w$ and $Y=L(\gl)$, we get $L_w\subseteq T'(L(\gl))$,  so $\AV(L_w)\subset\AV(L(\gl))$.
\end{proof}

To complete the reduction to infinitesimal character $\rho$, observe that we have $\AV(L_w)=\AV(L(\gl))$ and $\width(Y_\gl)=\width(Y_{-w\rho-\rho})$.

\subsection{Simply laced case}\label{sec:sl}  Assume $\gD$ is simply laced.  We assume the infinitesimal character is integral, otherwise the highest weight module is irreducible and the associated variety is $\bar{\cO_r}=\fp^+$.  Theorem \ref{thm:main} follows from the following Proposition.

\smallskip

\begin{prop}\label{prop:sl} Suppose $\gD$ is simply laced and let $w\in \cW$.
\begin{enumerate}
\item For $k=0,1,\dots,r$, the set $\{L_w\mid  w\in\cW \text{ and }\AV(L_w)=\bar{\cO_k}\}$ is a cell and the corresponding cell representation is irreducible. 
\item If $\width(\gD(\fn\cap \Ad(w)\fn))=k,$ then $\AV(L_w)=\bar{\cO_k}$.  
\item The cardinality of a maximal set of strongly orthogonal roots in $\gD(\fn\cap \Ad(w)\fn)$ is the width.
\item  $\AV(L_w)=\nu(w).$
\end{enumerate}
\end{prop}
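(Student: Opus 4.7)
The plan is to prove the four parts together by comparing the enumeration of lower-order ideals in Table \ref{tab:width} with the dimensions of the Springer special representations in Table \ref{tab:springer}. By Proposition \ref{prop:red} we may assume the infinitesimal character is $\rho$, so that the modules in question are exactly $\{L_w : w \in \cW\}$.

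For parts (1) and (2), set $\fC_k := \{w \in \cW : \AV(L_w) = \bar{\cO_k}\}$, $M_k := |\fC_k|$, and $N_k := \#\{w \in \cW : \width(Y_{-w\rho-\rho}) = k\}$. The lower bound (\ref{eqn:key}) yields $\{w : \width(Y_{-w\rho-\rho}) \geq m\} \subseteq \bigcup_{k \geq m} \fC_k$, hence the partial-sum inequalities
\[
\sum_{j \geq m} M_j \geq \sum_{j \geq m} N_j \quad \text{for all } m,
\]
with equality at $m = 0$ (both sums equal $|\cW|$). On the other hand, $\AV$ is constant on each Harish-Chandra cell $\fC$, and the cell representation $V_\fC$ contains the special Springer representation $\pi(\cO_k^\C,1)$ corresponding to the $k$ for which $\fC \subseteq \fC_k$, so $|\fC| \geq \dim\pi(\cO_k^\C,1)$. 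A side-by-side inspection of Tables \ref{tab:width} and \ref{tab:springer} shows that, in each simply laced Hermitian type, $\dim \pi(\cO_k^\C,1) = N_k$. Combining the cell bound with the partial-sum inequalities and the total equality $\sum_k M_k = \sum_k N_k$ forces each $\fC_k$ to consist of a single Harish-Chandra cell of size exactly $N_k$, with cell representation isomorphic to $\pi(\cO_k^\C,1)$ (hence irreducible); and the resulting equality of partial sums promotes the inclusion $\{w : \width(Y_{-w\rho-\rho}) = k\} \subseteq \fC_k$ to an equality. This establishes (1) and (2).

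The main obstacle will be part (3), namely, that every strongly orthogonal subset of $Y := \gD(\fn \cap \Ad(w)\fn)$ has cardinality at most $\width(Y)$. The reverse inequality is immediate from Lemma \ref{lem:width-so}, since every antichain is strongly orthogonal. For the forward direction I argue type by type. In type $A$, given a strongly orthogonal set $\{\eps_{a_i} - \eps_{b_i}\}_{i=1}^s \subseteq Y$ with $a_1 < \cdots < a_s$ and with $(b_1,\ldots,b_s)$ some permutation of its sorted list $b'_1 < \cdots < b'_s$, the roots $\gg'_i := \eps_{a_i} - \eps_{b'_i}$ are pairwise incomparable; the sets $\{j \leq i\}$ and $\{j : b_j \geq b'_i\}$ have total size $i + (s - i + 1) > s$, so for each $i$ there is an index $j$ with $a_j \leq a_i$ and $b_j \geq b'_i$, which gives $\gg_j \geq \gg'_i$ in $Y$ and places $\gg'_i \in Y$ because $Y$ is a lower-order ideal. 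An analogous sort-and-match argument handles type $D$, while the exceptional cases $\mathfrak{e}_{6(-14)}$ and $\mathfrak{e}_{7(-25)}$ are settled by inspection of the Hasse diagrams in the appendix.

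Part (4) follows. For $w \in \cW$, $\nu(w) \subseteq \AV(L_w) \subseteq \fp^+$ by (\ref{eqn:avBB}) and the discussion in Section \ref{sec:av-cells}, so by the second part of the lemma of \cite{BarchiniZierau16} recalled at the end of that section, $\nu(w) = \bar{\cO_j}$ for some $j$. Since in the simply laced case any maximal strongly orthogonal set $\{\gg_1,\dots,\gg_s\}$ in $\gD(\fn \cap \Ad(w)\fn)$ produces an element $X_{\gg_1} + \cdots + X_{\gg_s} \in \fn \cap \Ad(w)\fn \cap \cO_s$ by (\ref{eqn:K-orbits}), we read off that $j$ equals the maximal strongly orthogonal cardinality in $\gD(\fn \cap \Ad(w)\fn)$; by (3) this equals $\width(Y_{-w\rho-\rho})$, and by (2) this equals the index $k$ with $\AV(L_w) = \bar{\cO_k}$. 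Hence $\nu(w) = \AV(L_w)$.
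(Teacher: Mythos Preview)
Your overall architecture for parts (1), (2) and (4) matches the paper's: compare the width-counts $N_k$ from Table~\ref{tab:width} with the Springer dimensions from Table~\ref{tab:springer}, observe they agree, and use the cell lower bound $|\fC|\ge\dim\pi(\cO_k^\C,1)$ together with $\sum_k M_k=\sum_k N_k=\#\cW$. However, your counting argument for (1) has a genuine gap: the three constraints you list (partial-sum inequalities, total equality, and the per-cell lower bound) do \emph{not} by themselves force $M_k=N_k$. For instance, with the $\fe_{6(-14)}$ numbers $N_0=1,\,N_1=6,\,N_2=20$, the configuration $M_0=1,\,M_1=0,\,M_2=26$ (one cell of size $1$ in $\fC_0$, one cell of size $26$ in $\fC_2$, $\fC_1$ empty) satisfies every inequality you wrote down. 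The same phenomenon occurs for $\su(2,2)$, where $N_0=1,\,N_1=3,\,N_2=2$ and $M_0=1,\,M_1=0,\,M_2=5$ is not excluded. What is missing is precisely the input that each $\fC_k$ is \emph{nonempty}; once you know there is at least one cell for every $k$, the bound $|\fC_k|\ge N_k$ for each $k$ together with $\sum_k|\fC_k|=\sum_k N_k$ forces all the equalities and uniqueness of cells. The paper supplies this nonemptiness as a separate lemma immediately preceding the proof (via $A_\fq(\lambda)$-modules for most $k$, and ladder representations or translation for $k=1$ in the exceptional cases). You need either to invoke that lemma or to give another reason why every $\bar{\cO_k}$ actually occurs as an associated variety.

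Your treatment of part (3) is genuinely different from the paper's. The paper deduces (3) from (2) by a second counting/induction argument parallel to the one for (2). You instead give a direct, constructive ``sort-and-match'' proof that every strongly orthogonal subset of a lower-order ideal can be replaced by an antichain of the same size inside the ideal; your type~$A$ argument is correct and pleasant. This route has the advantage of being independent of the cell machinery, but you should spell out the type~$D$ cases ($\so^*(2n)$ and $\so(2,2n-2)$) rather than asserting they are analogous, since the incomparability condition there is a \emph{nesting} condition rather than a sorting condition, and the pigeonhole step needs to be adapted accordingly. Your argument for (4) is essentially the paper's.
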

 
\begin{rem}  Each of these statements is often false in the non-simply laced cases.
\end{rem}

We shall use the following lemma without proof.   

\begin{lemma} Suppose $\gD$ is simply laced. For each $k=0,1,\dots,r$ there exists a highest weight Harish-Chandra  module $L_w$ having $\bar{\cO_k}$ as associated variety.
\end{lemma}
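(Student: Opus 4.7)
The plan is to exhibit, for each $k\in\{0,1,\dots,r\}$, a specific $w\in \cW$ realizing $\AV(L_w)=\bar{\cO_k}$. By Proposition~\ref{prop:red} it suffices to work at infinitesimal character $\rho$, so the task reduces to showing that the map $w\mapsto \AV(L_w)$ from $\cW$ to the chain $\{\bar{\cO_0}\subsetneq \cdots \subsetneq \bar{\cO_r}\}$ is surjective.

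The two endpoints are immediate from what has already been set up. For $k=0$, taking $w$ to be the longest element of $W$ gives $-w\rho=\rho$, so $Y_{-w\rho-\rho}=\emptyset$ and $L_w=L(0)$ is the trivial representation with $\AV(L_w)=\{0\}=\bar{\cO_0}$. For $k=r$, Proposition~\ref{prop:bijection} supplies $w\in \cW$ corresponding to the full lower-order ideal $Y_{-w\rho-\rho}=\Delta(\fp^+)$; then $\width(Y_{-w\rho-\rho})=r$, so \eqref{eqn:key} yields $\AV(L_w)\supseteq \bar{\cO_r}=\fp^+$, while the general containment $\AV(L_w)\subseteq \fp^+$ for any highest weight Harish-Chandra module forces equality.

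For intermediate $k$ with $0<k<r$, the cleanest route is to invoke the \emph{Wallach modules}. Enright--Howe--Wallach \cite{EHW:83} construct a one-parameter family of unitarizable highest weight modules along the first reduction line, and at the integer Wallach points one obtains modules $A_1,\dots,A_{r-1}$ whose associated varieties equal $\bar{\cO_1},\dots,\bar{\cO_{r-1}}$, respectively. This computation was performed case-by-case by Joseph \cite{Jos:92} and Nishiyama--Ochiai--Taniguchi \cite{NishiyamaOchiaiTaniguchi01}, and uniformly by Bai--Hunziker \cite{BH:15} (see also Enright--Willenbring \cite{EW:04}). Each $A_k$ is a highest weight Harish-Chandra module at some integral infinitesimal character, and Proposition~\ref{prop:red} then exhibits it as having the same associated variety as some $L_{w_k}$ with $w_k\in \cW$ at infinitesimal character $\rho$.

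The main obstacle lies precisely in this intermediate range: upgrading the lower bound $\AV(L_w)\supseteq \bar{\cO_k}$ supplied by \eqref{eqn:key} (whenever $\width(Y_{-w\rho-\rho})=k$) to a sharp equality for at least one choice of $w$. Sidestepping this by appealing to the classical Wallach family, whose associated varieties have been computed independently, is the most economical approach. A more self-contained alternative would be to produce, for each type, a ``staircase'' element $w\in \cW$ whose orbital variety $\nu(w)=\bar{\cO_k}$ is forced to equal $\AV(L_w)$ via the chain structure \eqref{eqn:chain} and the orbital-variety description from \cite{BorhoBrylinski85}, but this requires a type-by-type verification and is not as clean.
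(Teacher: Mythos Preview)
Your argument is correct. The endpoint cases $k=0$ and $k=r$ are handled exactly as one would expect, and for the intermediate range you invoke the Wallach modules, whose associated varieties are known from \cite{Jos:92}, \cite{NishiyamaOchiaiTaniguchi01}, and \cite{BH:15} to be precisely $\bar{\cO_1},\dots,\bar{\cO_{r-1}}$; in the simply laced case these have integral infinitesimal character, so Proposition~\ref{prop:red} transports each to some $L_{w_k}$ at infinitesimal character $\rho$ with the same associated variety.

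The paper takes a related but organizationally different route. For classical $\fg_\R$ it appeals to $A_\fq(\gl)$ modules that happen to be highest weight, computing $\AV=K\cdot(\fu\cap\fp)$ directly from the Levi decomposition; this is more constructive but type-by-type. In the exceptional cases the paper also uses $A_\fq(\gl)$ modules for $k\neq 1$, but observes that no such module with regular integral infinitesimal character has associated variety $\bar{\cO_1}$, and so patches in the unitary ladder representations of \cite{Wallach79} (essentially your Wallach module for $k=1$), again followed by translation. The paper also mentions two alternatives: \cite[Cor.~4.7]{Vogan80} and direct inspection via the Atlas software. Your approach has the virtue of being uniform across all simply laced types and all intermediate $k$, at the cost of relying on the somewhat heavier prior computation of the Wallach associated varieties; the paper's approach is more hands-on in the classical range but requires the same kind of external input precisely where $A_\fq(\gl)$ modules fail.
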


\begin{proof}
For classical $\fg_\R$, a case-by-case argument shows that  all $\bar{\cO_k}, k=0,1,\dots,r$, occur as associated variety of some $A_\fq(\gl)$ representation that is a highest weight module.  In this case, writing the Levi decomposition of $\fq$ as $\fl+\fu$, the associated variety is $K\cdot\fu\cap\fp$, which is easily computed.  For the exceptional cases we may argue as follows.  There exist $A_\fq(\gl)$ modules with associated varieties $\bar{\cO_k}$ for $k\neq 1$ (e.g., the holomorphic discrete series for $k=r$).  When $k=1$ this is  not the case; there exists no unitary representation of regular integral infinitesimal character having associated variety $\bar{\cO_1}$.  It is known (see  \cite{Wallach79}) that there exist unitary ladder representations.  These have associated varieties $\bar{\cO_1}$ (by computation of GK dimension directly from the $K$-types) and have integral infinitesimal character.  The infinitesimal character is however singular.  The translation principle produces a $(\fg,K)$-module  of infinitesimal character $\rho$ having the same associated variety (as in the proof of Proposition~\ref{prop:red}).  Alternatively, Corollary~4.7 of \cite{Vogan80} may be applied to find $w\in\cW$ so that $\AV(L_w)=\bar{\cO_1}$. Yet another alternative is to use the atlas software to view all cell decompositions (as $W$-representations) for cells having associated variety of annihilator $\bar{\cO_k^\C}$.
\end{proof}

The lemma implies there exists a cell $\fC_k$ of associated variety $\bar{\cO_k}$, for each $k=0,1,\dots,r$.  We know by \S\ref{ssec:AV}  that $\#\fC_k\geq \dim(\pi(\cO_k^\C,1))$.   Let $M_k:=\{w\in\cW\mid \width(\gD(\fn\cap \Ad(w)\fn))=k\}$ and let $m_k$ be the cardinality of $M_k$.  Our Tables \ref{tab:width} and \ref{tab:springer} show that 
\begin{equation}\label{eqn:spr-width}
 m_k=\dim(\pi(\cO_k^\C,1)),\quad k=0,1,\dots,r.
\end{equation} 

The following example illustrates the argument of our proof of the proposition.

\begin{example}\label{ex:e6} Consider  $\fe_{6(-14)}$.  The cardinality of $\cW$ is 27 and $r=2$.  So there are three $K$-orbits $\cO_0,\cO_1$ and $\cO_2$ in $\fp^+$.  Here is the data for $\fe_6$ (using the notation of \cite{Carter85} where $\phi_{a,b}$ has dimension $a$ and degree $b$).

\begin{table}[H] 
\centering
\renewcommand{\arraystretch}{1.5}
\setlength\tabcolsep{10pt}
\begin{tabular}{cccc}
\toprule
$i$  &  $\dim(\cO_i)$    & $\pi(\cO_i^\C,1)$  &  $\cO_i^\C$  \\
\midrule    
$0$  & $0$    & $\phi_{1,36}$  & $0$   \\
$1$  & $11$ &   $\phi_{6,25}$  &  $A_1$  \\
$2$  & $16$ &   $\phi_{20,20}$  &  $2A_1$\\
\bottomrule
\end{tabular}
\end{table}
There are three cells $\fC_0=\{\C\}, \fC_1$ and $\fC_2$.  We have $\#\fC_0=1$ and by the table  $\#\fC_1\geq 6$ and $\#\fC_2\geq 20$, but $\#\cW=27$, so equality holds and we conclude (a) there are no other cells and (b) each cell representation is irreducible.  In Table \ref{tab:width} we have $m_0=1, m_1=6$ and $m_2=20$.  By (\ref{eqn:key}), the only candidate for $\fC_0$ is $L_w$ with $\width(w)=0$ (so, of course, $L_w=\C$ and $w$ is the long element of $W$).  The only candidates for $\fC_1$ correspond to width $0$ or $1$ (by (\ref{eqn:key})), but width $0$ is already accounted for.  Therefore $\fC_1=\{L_w\mid \width(\gD(\fn\cap \Ad(w)\fn))=1\}$.  The remaining $20$ elements of $\cW$ have $\width(\gD(\fn\cap \Ad(w)\fn))=2$ and  must give us all of $\fC_2$.  
\end{example}

\begin{proof}[Proof of the proposition] Let $\fC_k$ be a cell with associated variety $\bar{\cO_k}$ for each $k$.  We know that $m_k=\dim(\pi(\cO_k^\C,1))$, by our tables, and $m_k\leq\#\fC_k$.  Since $\sum m_k=\#\cW=\sum \#\fC_k$ (obviously), we conclude
\begin{equation*}
m_k=\#\fC_k.
\end{equation*}
By (\ref{eqn:spr-width}), part (1) is proved.  Now (\ref{eqn:key}) implies
\begin{equation}\label{eqn:11}
\{L_w\mid \width(\gD(\fn\cap \Ad(w)\fn))\geq k\}\subseteq\bigcup_{j\geq k}\fC_j, \text{ all }k.
\end{equation}
Then
\begin{equation}\label{eqn:22}
\{L_w\mid \width(\gD(\fn\cap \Ad(w)\fn))=k\}=\fC_k, \text{ for all }k=0,1,\dots,r,
\end{equation}
by the following induction argument.  Use downward induction on $k$.   Take $k=r$ in (\ref{eqn:11}) to get $\{L_w\mid \width(\gD(\fn\cap \Ad(w)\fn))=r\}\subseteq\fC_r$, but the two sets have the same cardinality, so are equal.  For the inductive hypothesis assume $\{L_w\mid \width(\gD(\fn\cap \Ad(w)\fn))=j\}=\fC_j$ holds for $j\geq k+1$ and suppose $\width(\gD(\fn\cap \Ad(w)\fn))=k$.  Then $L_w\in\fC_j$ for some $j\geq k$.  By induction, $L_w\notin \fC_j$ for any $j\geq k+1$, so $L_w\in\fC_k$.  Again we get an inclusion $\{L_w\mid \width(\gD(\fn\cap \Ad(w)\fn))=k\}\subseteq\fC_k$ for which both sides have the same cardinality, so equality holds and (2) is proved.

For part (3), let
\begin{align*}
S_k:=\{w\in&\cW\mid \gD(\fn\cap \Ad(w)\fn)\text{ contains a maximal set of}\\ &\text{ strongly orthogonal roots having cardinality }k\}
\end{align*}
and let 
$$
M_k:=\{w\in\cW\mid \width(\gD(\fn\cap \Ad(w)\fn))=k\}.
$$
Then $M_k\subseteq \cup_{j\geq k}S_j$.  An argument very similar to the induction above  shows that $M_k=S_k$, for all $k$.

If $\width(\gD(\fn\cap \Ad(w)\fn))=k$, then, by part (2), Lemma \ref{lem:width-so} and (\ref{eqn:avBB}) we have
\begin{equation*}
\AV(L_w)=\bar{\cO_k}\subseteq\nu(w)\subset\AV(L_w),
\end{equation*}
and part (4) follows.
\end{proof}

\subsection{Non-simply laced cases: integral infinitesimal character}  The combinatorics in the simply vs. non-simply laced cases has a difference that is worth pointing out.  As an example, consider $\fg$ of type $B_3$.  The Hasse diagram for $\gD(\fp^+)=\{\ep_1\}\cup\{\ep_1\pm\ep_2,\ep_1\pm\ep_3\}$ a linear chain. 
There are five lower order ideals of width $1$.  Two contain a pair of strongly orthogonal long roots and three contain just one; the width is not the number of strongly orthogonal roots.  So (3) of Proposition \ref{prop:sl} fails.  It turns out that for all five $\AV(L_w)=K\cdot(X_{\ep_1-\ep_2}+X_{\ep_1+\ep_2})=\bar{\cO_2}$.  So part (2) fails in type $B_3$ for some $w\in\cW$. We shall see that (1) also fails.  Similarly, these statements fail for types $B_n$ and $C_n$, $n\geq 2$.

We prove the following for $\fg$ of type $B_n$  and $C_n$, $n\geq 2$.  Theorem \ref{thm:main} will follow (for all integral infinitesimal characters). 

\begin{claim}
Let $w\in \cW$ and $\width(\gD(\fn\cap \Ad(w)\fn))=k$.  Then the following hold.
\begin{enumerate}
\item If $\fg$ is of type $B_n$, then $\AV(L_w)=\bar{\cO_{2k}}$.
\item If $\fg$ is of type $C_n$, then 
\begin{equation*}
\AV(L_w)=\begin{cases} \bar{\cO_{2k}}, &2k\neq n+1  \\  \bar{\cO_n}, & 2k=n+1.
\end{cases}
\end{equation*}
\end{enumerate}    
\end{claim}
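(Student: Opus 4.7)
The plan is to mimic the counting scheme of Proposition~\ref{prop:sl}, with the twist that in types $B_n$ and $C_n$ the naive lower bound $\AV(L_w) \supseteq \bar{\cO_{\width(Y_w)}}$ from (\ref{eqn:key}) must often be sharpened using the non-specialness of certain $K$-orbits recorded in Table~\ref{tab:springer}. First, by Proposition~\ref{prop:red} I would reduce to integral infinitesimal character $\rho$ and write $\gl = -w\rho - \rho$ for $w \in \cW$.

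For type $C_n$ I would exploit the explicit maximal-antichain dichotomy from Section~\ref{sec:width-sp}: for $w$ with $\width(Y_w) = k$, the maximal antichain takes one of two forms. In ``form 2'' (all $k$ short roots $\ep_{i_s}+\ep_{j_s}$ with $2k$ pairwise distinct indices) summing the corresponding root vectors gives a rank-$2k$ symmetric matrix in $\fn \cap \Ad(w)\fn$, hence $\AV(L_w) \supseteq \bar{\cO_{2k}}$ via (\ref{eqn:avBB}). In ``form 1'' (one long root $2\ep_i$ plus $k-1$ short roots, $2k-1$ distinct indices) one only obtains a rank-$(2k-1)$ element and so the a priori weaker bound $\AV(L_w) \supseteq \bar{\cO_{2k-1}}$. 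When $2k-1<n$ I would then invoke Table~\ref{tab:springer} to observe that $\pi(\cO_{2k-1}^\C,1)$ is not special, and since the Springer representation attached to the associated variety of annihilator must be special, conclude $\AV(L_w) \supsetneq \bar{\cO_{2k-1}}$ and hence $\AV(L_w) \supseteq \bar{\cO_{2k}}$. When $2k-1=n$ (only possible for $n$ odd and $k=(n+1)/2$) the bound $\AV(L_w) \supseteq \bar{\cO_n}=\fp^+$ is already the statement of the claim.

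For type $B_n$ the structure is much simpler: $\gD(\fp^+)$ is a chain, so $\width(Y_w)\in\{0,1\}$. Width $0$ gives $L_w=\C$ with $\AV=\bar{\cO_0}$. For width $1$, the bound $\AV(L_w) \supseteq \bar{\cO_1}$ from (\ref{eqn:key}), the non-specialness of $\cO_1^\C$ (Table~\ref{tab:springer}), and the chain $\cO_0 \subset \cO_1 \subset \cO_2=\fp^+$ together force $\AV(L_w)=\bar{\cO_2}$.

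To obtain the reverse inclusion in type $C_n$ I would run a counting argument comparing the width distribution of Table~\ref{tab:width} with the cell sizes. The hard part will be that the Springer dimension $\binom{n}{k}=\dim\pi(\cO_{2k}^\C,1)$ is strictly smaller than the count $\binom{n+1}{k}$ of width-$k$ elements in $\cW$, so the simply-laced telescoping argument of Proposition~\ref{prop:sl} does not close on its own. One has to know that the cell of associated variety $\bar{\cO_{2k}}$ has size exactly $\binom{n}{k}+\binom{n}{k-1}=\binom{n+1}{k}$, which requires Lusztig's classification of families in type $C$ pairing the special Springer representation $\pi(\cO_{2k}^\C,1)$ with a non-special Springer representation attached to $\cO_{2k-1}^\C$. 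Granting this refined cell-size, the identity $\sum_k \binom{n+1}{k}=2^n=|\cW|$ together with the lower bounds above forces equality cell by cell.
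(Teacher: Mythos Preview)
Your outline is correct and follows the paper's overall counting strategy, but the execution differs from the paper's proof of~(2) in two places worth noting.

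\textbf{The lower bound in type $C_n$.} You establish $\AV(L_w)\supseteq\bar{\cO_{2k}}$ by first getting a rank-$(2k-1)$ or rank-$2k$ element from the antichain dichotomy and then, in the form-1 case, invoking the non-specialness of $\pi(\cO_{2k-1}^\C,1)$ to bump $\bar{\cO_{2k-1}}$ up to $\bar{\cO_{2k}}$. This is exactly the trick the paper uses for $B_n$, but \emph{not} what it does for $C_n$. Instead, the paper proves a two-sided rank lemma (Lemma~\ref{lem:rank}): realizing $\fp^+$ as $\sym(n,\C)$, it shows that width~$k$ forces $\fn\cap\Ad(w)\fn$ to sit inside matrices $\bigl(\begin{smallmatrix}0&B\\B^t&D\end{smallmatrix}\bigr)$ with $D\in\sym(k,\C)$, giving the \emph{upper} bound $\nu(w)\subseteq\bar{\cO_{2k}}$ by elementary linear algebra. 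The paper then runs the counting in the contrapositive direction, from $\AV(L_w)\subseteq\bar{\cO_{2k}}\Rightarrow\nu(w)\subseteq\bar{\cO_{2k}}\Rightarrow\width\leq k$. Your route avoids this matrix computation at the price of invoking the deeper fact that associated varieties of annihilators are special; the paper's route is more elementary and also pins down $\nu(w)$ itself to one of two orbit closures, which your argument does not.

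\textbf{The cell structure.} Both approaches need to know that (i)~there is exactly one Harish-Chandra cell for each $\bar{\cO_{2k}}$ (and $\bar{\cO_n}$ when $n$ is odd) and (ii)~its cardinality is $\binom{n}{k}+\binom{n}{k-1}=\binom{n+1}{k}$. The paper obtains both by citing \cite{BZ17}. Your proposal to extract this from Lusztig's families is correct in spirit---the family of $\pi(\cO_{2k}^\C,1)$ does contain $\pi(\cO_{2k-1}^\C,1)$ and the dimensions add up---but knowing the family does not by itself give the uniqueness of the cell or that the cell representation equals the full family rather than a proper sub-sum. That translation from Lusztig's $W$-cell picture to the Harish-Chandra cell picture for $\cW\subset W$ is precisely what \cite{BZ17} carries out; in practice you would end up citing it or reproving it.

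(A minor point: the reduction via Proposition~\ref{prop:red} is unnecessary here, since the claim is already stated for $L_w$ at infinitesimal character~$\rho$.)
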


\begin{proof}[Proof of {\rm (1)}]
In this case $\#\cW=2n$ and $r=2$.  Table \ref{tab:springer} says that $\pi(\cO_0^\C,1)$  and $\pi(\cO_2^\C,1)$ are special and $\pi(\cO_1^\C,1)$ is not special.  Therefore, only $\bar{\cO_0}$ and $\bar{\cO_2}$ can occur as associated varieties.  We know that $\fC_0=\{\C\}$ , so the remaining $2n-1$ $L_w$'s have associated variety   $\bar{\cO_2}$.  For each of these $w$'s the width is $1$ (by Table \ref{tab:width}).  This proves (1) of the claim.  We also conclude there is just one cell of associated variety $\bar{\cO_2}$, since such a cell has size at least $\dim(\pi(\cO_k^\C,1))=n$.
\end{proof}

\begin{proof}[Proof of {\rm (2)}] This case is more involved; it requires more than the counting arguments.   Here $\#\cW=2^n$ and $r=n$.  The $\pi(\cO_k^\C,1)$ are special when either $k$ is even or $k=n$.  We begin with a lemma about the width of lower order ideals in $\gD(\fp^+)$ for type $C_n$. 

Consider the usual realization of $\fg=\f{sp}(2n,\C)$.  Then $\fp^+$ is identified with the symmetric $n\times n$ matrices $\sym(n,\C)$.  From the description of the $K$-orbits in $\fp^+$ (given  in (\ref{eqn:K-orbits})) it follows that 
\begin{align}\notag
\cO_j=\{x\in\sym(n,\C)\mid \rank(x)=j\},
\intertext{therefore}
\bar{\cO_j}=\{x\in\sym(n,\C)\mid \rank(x)\leq j\}.\label{eqn:rank}
\end{align}
\end{proof}

\begin{lemma}\label{lem:rank}  For type $C_n$, if $\width(\gD(\fn\cap \Ad(w)\fn))=k,$ then $\nu(w)= \bar{\cO_{2k}}$ or $\bar{\cO_{2k-1}}$, for $k=0,1,\dots,\lfloor\frac{n}{2}\rfloor$.  
\end{lemma}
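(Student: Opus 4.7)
The plan is to identify $\nu(w)$ as a rank-stratum closure in $\fp^+\cong \sym(n,\C)$. Since the $K=GL(n,\C)$-orbits in $\fp^+$ are exactly the rank strata $\cO_j=\{M\mid \rank M=j\}$, and since $B$ acts on $\fp^+$ through its projection to $K$ (the abelian nilradical $\exp\fp^+$ acts trivially on $\fp^+$) by $M\mapsto kMk^T$ with $k$ upper triangular---an action which preserves rank---the preceding lemma implies $\nu(w)=\bar\cO_j$ where $j$ is the generic (equivalently, maximum) rank of matrices in $V:=\fn\cap\Ad(w)\fn = \spa\{X_\ga\mid \ga\in Y\}$, with $Y=\gD(V)$ of width $k$.

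For the lower bound $j\geq 2k-1$, observe that any maximal antichain in $Y$ has size $k$ and is either of Form~B (all short roots $\ep_{i_s}+\ep_{j_s}$ with $i_1<\cdots<i_k<j_k<\cdots<j_1$) or Form~A (one long root $2\ep_c$ together with $k-1$ short roots nested around $c$). The sum of the corresponding root vectors in $V$ is a symmetric matrix with pairwise disjoint row-column supports, hence of rank $2k$ (Form~B, from $k$ off-diagonal blocks $E_{i_s j_s}+E_{j_s i_s}$) or $2k-1$ (Form~A, from the diagonal entry $E_{cc}$ together with $k-1$ off-diagonal blocks).

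The upper bound $j\leq 2k$ is the main obstacle. My plan is to invoke the poset isomorphism $\gD(\fp^+)_{C_n}\cong\gD(\fp'^+)_{D_{n+1}}$ of \S\ref{sec:width-sp}, which sends $\ep_i+\ep_j$ (with $i\leq j$) to $\ep'_i+\ep'_{j+1}$, carrying $Y$ to a width-$k$ lower order ideal $Y'$ in $D_{n+1}$. The matching of root vectors gives a linear map
\[
 \iota:\sym(n,\C)\longrightarrow \operatorname{skew}(n+1,\C),\qquad \iota(M)_{a,b}=M_{a,b-1}\text{ for }a<b,
\]
extended antisymmetrically, with $\iota(V)=V':=\spa\{X'_{\ga'}\mid \ga'\in Y'\}\subset \fp'^+$. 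Applying Proposition~\ref{prop:sl} to the simply-laced case $D_{n+1}$ gives $\nu(w')=\bar{\cO'_k}$ and therefore $\max_{M'\in V'}\rank M'=2k$.

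It then suffices to prove the rank estimate $\rank \iota(M)\geq \rank M$ for every $M\in\sym(n,\C)$, which yields $\max_{M\in V}\rank M\leq 2k$ and hence $j\in\{2k-1,2k\}$. My proposed approach is to write $M=\sum_{s=1}^r \lambda_s v_s v_s^T$ with $r=\rank M$ and $\{v_s\}$ linearly independent, compute $\iota(M)=\sum_s \lambda_s (u_s w_s^T - w_s u_s^T)$ with $u_s=(v_s,0)$ and $w_s=(0,v_s)$ in $\C^{n+1}$, and then bound $\dim\ker\iota(M)\leq n+1-r$ by analyzing the kernel equations, which couple the first and last $n$ coordinates of a putative kernel vector via $M$ and a shift. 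This rank comparison is the technical heart of the argument.
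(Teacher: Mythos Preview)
Your lower bound argument is correct and matches the paper's: a maximal antichain of the two types described in \S\ref{sec:width-sp} gives an element of $V$ of rank $2k$ or $2k-1$, so $\nu(w)\supseteq\bar{\cO_{2k-1}}$.

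The upper bound argument, however, has a genuine gap. The claimed inequality $\rank\iota(M)\geq\rank M$ for all $M\in\sym(n,\C)$ is \emph{false}. Take $n=4$ and $M_{ab}=|a-b|+1$; then $\det M=-20\neq0$ so $\rank M=4$, while the entries of $\iota(M)$ are $\iota(M)_{a,b}=M_{a,b-1}=b-a$, i.e.\ $\iota(M)=xy^T-yx^T$ with $x=(1,1,1,1,1)$ and $y=(1,2,3,4,5)$, so $\rank\iota(M)=2$. (Your intermediate formula $\iota(vv^T)=uw^T-wu^T$ is also wrong: for $n=3$ and $v=(1,1,1)$ one gets $(uw^T-wu^T)_{2,3}=0$ but $\iota(vv^T)_{2,3}=1$.) The map $\iota$ matches root vectors combinatorially but is not the kind of natural linear-algebraic construction that would control ranks; in particular $\iota(vv^T)$ is typically of rank much larger than $2$, so there is no hope of summing such terms to bound ranks from below on the skew side.

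The paper's route to the upper bound is far more direct and avoids this detour entirely. One argues purely inside $\sym(n,\C)$: the condition $\width(Y)=k$ forces the support of $Y$ to miss all roots $\ep_i+\ep_j$ with both indices small, so every $M\in V$ lies in a subspace of block shape
\[
\begin{pmatrix}0&B\\ B^t&D\end{pmatrix}
\]
whose rank is visibly $\leq 2k$. This is an elementary combinatorial constraint on the entries and requires no comparison with the $D_{n+1}$ picture; you should abandon the transfer and argue directly that if $Y$ contained a root with both indices $\leq n-k$ then, being a lower order ideal, it would contain an antichain of size $k+1$.
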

\begin{proof}
Suppose $\width(\gD(\fn\cap \Ad(w)\fn))=k$, then $\gD(\fn\cap \Ad(w)\fn)$ contains $2\ep_{n},2\ep_{n-1},\dots,$ $2\ep_{n-k+1}$ and does not contain any roots $\ep_i+\ep_j$ with $i\leq k\leq j$.  Therefore, $\fn\cap \Ad(w)\fn$ is contained in the space of matrices  
\begin{equation*}
\left[\begin{array}{c|c}
0 & B \\ \hline B^t & D\end{array}\right]\subseteq\sym(n,\C), \text{ with }D\in \sym(k,\C).
\end{equation*}
These matrices all have rank at most $2k$, so $\nu(w)\subseteq \bar{\cO_{2k}}$.

In \S\ref{sec:width-sp} we gave the form of a maximal chain when the width is $k$.  Therefore, $\fn\cap\Ad(w)\fn$ contains either
\begin{equation*}
  \sum_{s=1}^{k-1}  X_{\ep_{n-k+1-s}+\ep_{n-k+1+s}}+X_{2\ep_{n-k+1}}\quad\text{ or }  \quad
  \sum_{s=1}^k X_{\ep_{n-k+1-s}+\ep_{n-k+s}},
\end{equation*}
matrices of rank either $2k-1$ or $2k$.  So $\nu(w)\supseteq\bar{\cO_{2k-1}}$.

Therefore $\bar{\cO_{2k-1}}\subseteq\nu(w)\subseteq \bar{\cO_{2k}}$.
\end{proof}
We need several known facts contained in \cite{BZ17} about the cell structure for type $C_n$, which we collect here.
\begin{lemma} Let $\fg$ be of type $C_n$.
\begin{itemize}
\item[(a)] The highest weight Harish-Chandra  modules of infinitesimal character $\rho$ are partitioned into cells: 
\begin{align*}
 &\fC_{2k}, \text{ having associated variety } \bar{\cO_{2k}},\ k=0,1,\dots,\lfloor n/2\rfloor \text{ and }  \\
 &\fC_n,  \text{ having associated variety }
 \bar{\cO_{n}},\text{ when $n$ is odd.} 
 \end{align*}
\item[(b)]These cells have cardinalities
 \begin{align*}
&\#\fC_{2k}=\dim{\pi(\cO_{2k}^\C,1)}+\dim{\pi(\cO_{2k-1}^\C,1)} \text{ and } \#\fC_n=\dim(\pi(\cO_{\frac{n+1}{2}}^\C,1)).
\end{align*}
\end{itemize}
\end{lemma}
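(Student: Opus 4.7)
The plan is to establish the cell structure by combining the width distribution in Table~\ref{tab:width} with lower bounds on cell cardinalities from the Springer correspondence (Table~\ref{tab:springer}), using a tight counting argument to force equalities.

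First, I would constrain the possible associated varieties of $L_w$ for $w\in\cW$. Since the complex saturation of $\AV(L_w)$ is the associated variety of the annihilator of $L_w$, it corresponds under the Springer correspondence to a \emph{special} orbit (the Springer representation occurring in the cell representation must be special). For $\sp(2n,\C)$, the orbits $\cO_k^\C$ of Jordan type $[2^k,1^{2n-2k}]$ are special exactly when $k$ is even or $k=n$ is odd (Table~\ref{tab:springer}). Hence each $\AV(L_w)$ is one of $\bar{\cO_{2k}}$ for $0\leq k\leq\lfloor n/2\rfloor$ or $\bar{\cO_n}$ when $n$ is odd, and the remaining candidates $\bar{\cO_{2k-1}}$ with $2k-1<n$ are excluded. (For widths $k\leq \lfloor n/2\rfloor$, Lemma~\ref{lem:rank} already shows $\bar{\cO_{2k-1}}\subseteq\nu(w)\subseteq\bar{\cO_{2k}}$, and for $n$ odd and width $(n+1)/2$ the same rank analysis forces $\nu(w)=\bar{\cO_n}=\fp^+$.)

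For part (b), I would establish lower bounds on each cell cardinality. The cell representation contains the Springer representation of its associated variety of annihilator, so $\#\fC_{2k}\geq\dim\pi(\cO_{2k}^\C,1)$ and analogously for $\fC_n$. The non-special representations $\pi(\cO_{2k-1}^\C,1)$ (for $2k-1<n$) must still appear in the coherent continuation representation on the span of the $L_w$; by Lusztig's theory of special pieces (equivalently, families) in type $C_n$, such a non-special representation is absorbed into the cell whose associated variety is the smallest special orbit dominating the corresponding non-special orbit, namely $\fC_{2k}$. This yields
\begin{equation*}
\#\fC_{2k}\ \geq\ \dim\pi(\cO_{2k}^\C,1)+\dim\pi(\cO_{2k-1}^\C,1)\qquad\text{and}\qquad \#\fC_n\ \geq\ \dim\pi(\cO_n^\C,1).
\end{equation*}
Using $\binom{n}{k}+\binom{n}{k-1}=\binom{n+1}{k}$ together with the truncated binomial identity $\sum_{k=0}^{\lfloor n/2\rfloor}\binom{n+1}{k}=2^n$ (with the middle binomial halved when $n$ is odd), the sum of the lower bounds saturates $|\cW|=2^n$ exactly. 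Hence every inequality is an equality, proving both the cardinality formula (b) and the partition claim (a): each $w$ of width $k\leq\lfloor n/2\rfloor$ lies in $\fC_{2k}$, and the remaining $w$ of width $(n+1)/2$ (when $n$ is odd) lie in $\fC_n$, since the block sizes on the two sides match Table~\ref{tab:width}.

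The main obstacle is justifying that the non-special Springer representation $\pi(\cO_{2k-1}^\C,1)$ actually contributes to the cell representation $V_{\fC_{2k}}$, rather than to some different cell or failing to contribute at all to the subrepresentation generated by highest weight Harish-Chandra modules. A rigorous argument demands either explicit Kazhdan--Lusztig cell calculations for the subset $\cW\subset W(C_n)$, an appeal to Lusztig's classification of families in type $C_n$ (where each family, consisting of one special together with several non-special representations, comprises the representations attached to a single two-sided cell), or the microlocal approach of Barchini--Zierau in \cite{BZ17}, where the characteristic cycle of each $L_w$ is computed directly on the flag variety of $Sp(2n,\C)$ and the cell structure follows from the resulting geometry.
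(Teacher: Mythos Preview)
The paper does not give an independent proof of this lemma: it is stated as ``known facts contained in \cite{BZ17}'' and the proof consists of the single line ``For part (b), see \cite[Prop.~3.7]{BZ17} and the preceding discussion in \cite{BZ17}.'' Part (a) is not argued separately; it too is imported from \cite{BZ17}, where the cell structure is established via explicit characteristic-cycle computations on the flag variety of $Sp(2n,\C)$.

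Your counting outline is coherent and would work \emph{if} step~3 were justified, but you have correctly identified the gap yourself. The assertion that the non-special Springer representation $\pi(\cO_{2k-1}^\C,1)$ occurs in the cell representation $V_{\fC_{2k}}$ (rather than in some other cell, or with multiplicity zero in the highest-weight block) is exactly what cannot be read off from the elementary bounds you invoke. Lusztig's theory of families tells you which representations belong to the same two-sided cell of $W$, but the Harish-Chandra cells here are left cells restricted to $\cW$, and the passage from the $W$-picture to the $\cW$-picture is not automatic. There is also a hidden assumption in your argument that each admissible associated variety is realized by exactly one cell; without step~3 you cannot rule out, say, two cells sharing $\bar{\cO_{2k}}$ or some $\bar{\cO_{2k}}$ not occurring at all.

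In short: the paper sidesteps the obstacle by citation to \cite{BZ17}, and your proposal ends at precisely the point where that citation becomes necessary. Your final paragraph is an honest diagnosis---any of the three routes you list would close the gap, and the paper chooses the third.
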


\begin{proof} For part (b), see \cite[Prop.~3.7]{BZ17} and the preceding discussion in \cite{BZ17}.

Now we prove the claim by showing 
\begin{equation*}
\{L_w:\width(\gD(\fn\cap \Ad(w)\fn))=k\}=\begin{cases}\fC_{2k},& k=0,1,\dots,\lfloor \frac{n}{2}\rfloor\\ \fC_n,&k=\frac{n+1}{2}.\end{cases}
\end{equation*}
Table \ref{tab:width} along with part (a) of the lemma imply
\begin{equation}
\begin{split}\label{eqn:numbers-cn}
 &m_k=\dim{\pi(\cO_{2k}^\C,1)}+\dim{\pi(\cO_{2k-1}^\C,1)}=\#\fC_{2k} \text{ and}  \\
 &m_{\frac{n+1}{2}}=\dim(\pi(\cO_{\frac{n+1}{2}}^\C,1))=\#\fC_n, \  n \text{ odd}.
\end{split}
\end{equation}

If $\AV(L_w)\subset\bar{\cO_{2k}}$, then $\nu(w)\subset\bar{\cO_{2k}}$.  Therefore, by Lemma \ref{lem:rank}, $\width(\gD(\fn\cap\Ad(w)\fn))\leq k$.  It follows that 
\begin{equation*}
\bigcup_{j\leq k}\fC_{2j} \subset \{L_w\mid \width(\gD(\fn\cap\Ad(w)\fn))\leq k\}=\bigcup_{j\leq k}\{L_w\mid \width(\gD(\fn\cap\Ad(w)\fn))=j\},
\end{equation*}
for $k=0,1,\dots,\lfloor\frac{n}{2}\rfloor$.  Equality holds by (\ref{eqn:numbers-cn}).
Part (2) of the claim now follows by induction on $k$ (for $k=0,1,\dots,\lfloor\frac{n}{2}\rfloor).$  The $k=0$ case is clear.  If 
\begin{equation*}
 \{L_w\mid \width(\gD(\fn\cap \Ad(w)\fn))=j\}=\fC_{2j}, \text{ for }j< k
\end{equation*}
then 
\begin{equation*}
 \{L_w\mid \width(\gD(\fn\cap \Ad(w)\fn))=k\}=\bigcup_{j\leq k}\fC_{2j}\smallsetminus 
 \bigcup_{j< k}\fC_{2j}=\fC_{2k}.
\end{equation*}

Finally, when $n$ is odd, for the remaining $L_w$'s we have $\width(\gD(\fn\cap \Ad(w)\fn))=\frac{n+1}{2}$ and $\AV(L_w)=\bar{\cO_n}$.
\end{proof}

\subsection{Half-integral case}\label{ssec:half}
Theorem \ref{thm:main} will  now be proved for highest weight Harish-Chandra modules having half-integral highest weight.    As mentioned in the introduction, when $\gl$ is half-integral then either $L(\gl)$ is irreducible (in which case the associated variety is $\bar{\cO_r}=\fp^+$) \emph{or} $\gD=\gD(\fg,\fh)$ is non-simply laced.  We assume for the remainder of this section that $\gD$ is non-simply laced and $\gl$ is half-integral.

The two cases to consider are $\f{sp}(2n,\R)$ and $\f{so}(2,2n-1)$.  The integral root systems $\gD_\gl$ and corresponding Lie algebras $\fg_\gl$ are described in \S\ref{ssec:half-2}.  Recall that $W_\gl$ denotes the Weyl group of $\gD_\gl$ and we have fixed the positive system $\gD_\gl^+=\gD^+\cap\gD_\gl$.  Let $\Pi_\gl$ be the simple roots for this positive system.  The Lie algebra $\fg_\gl$ has a real form of Hermitian type and we my choose corresponding group $G_{\gl,\R}$ to be either $SO^*(2n)$ or $SL(2,\R)$.  Then, since $\gl$ is integral for $\gD_\gl$, $L(\gl)$ is an  Harish-Chandra  module for $G_{\gl,\R}$.  Furthermore, letting $K_\gl$ be the complexification of a maximal compact subgroup, the associated varieties of highest weight Harish-Chandra modules are the closures of the $K_\gl$-orbits in $\fp_\gl^+$, which we denote by $\cO_{\gl,0},\cO_{\gl,1},\dots$.

In our proof of Theorem \ref{thm:main} we first determine the Gelfand--Kirillov dimension of $L(\gl)$. This is done by applying  Lusztig's formula for the Gelfand--Kirillov dimension given in terms of his $a$-function.  Since the possible associated varieties are the $\bar{\cO_k}$, $k=0,1,\dots,r$, which form an increasing sequence, the Gelfand--Kirillov dimension determines the associated variety.  The key fact is that the $a$-function depends only on the Coxeter system $(W,\Pi)$, therefore we are able to relate the Gelfand--Kirillov dimension of $L(\gl)$ to the Gelfand--Kirillov dimension of a highest weight Harish-Chandra module for $\fg_{\gl,\R}$, then  we may apply the integral case of  Theorem \ref{thm:main} proved in the preceding sections.

 For Lusztig's $a$-function see \cite{Lusztig85a}, \cite{Lusztig85b}, \cite{BV83} and \cite{BX}.  However, details about the $a$-function are not needed here.  Lusztig's formula for the Gelfand--Kirillov dimension may be stated as follows.  If $\gl+\rho$ is  regular and $w\in W_\gl$ is chosen so that $w^{-1}(\gl+\rho)$ is antidominant, then 
\begin{subequations}
\begin{equation*}
 \GKD(L(\gl))=\#\gD^+ - a_\gl(w), 
\end{equation*}
where $a_\gl$ is Lusztig's $a$-function for $(W_\gl,\Pi_\gl)$.  In particular
\begin{equation}\label{eqn:LF2}
 \GKD(L_w)=\#\gD^+ - a(w), 
\end{equation}
where $a$ is the $a$-function for $(W,\Pi)$.
 
When $\gl+\rho$ is singular, then choose $w$ to be the minimal length element of $W_\gl$ for which $w^{-1}(\gl+\rho)$ is antidominant, then 
\begin{equation}\label{eqn:LF3}
 \GKD(L(\gl))=\#\gD^+ - a_\gl(w).
\end{equation}
\end{subequations}
See \cite[Prop.~1.1]{BX} for this formula.
  
\begin{prop}\label{prop:gkdim}
 Suppose $L(\gl)$ is a highest weight Harish-Chandra module and $\gl$ is half-integral.  If $\width(Y_\gl)=m$, then
\begin{equation*}
 \GKD(L(\gl))=(\#(\gD(\fp^+))-\#\gD(\fp_\gl^+)) + \dim(\cO_{\gl,m}). 
\end{equation*}
\end{prop}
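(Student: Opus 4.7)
The plan is to apply Lusztig's formula (\ref{eqn:LF3}) twice — once to $L(\gl)$ as a $\fg$-module and once to a parallel highest weight module $L'(\gl)$ for the integral subalgebra $\fg_\gl$ — and subtract, reducing to the already-proved integral case of Theorem \ref{thm:main}. The structural input enabling this is that Lusztig's $a$-function $a_\gl$ depends only on the Coxeter system $(W_\gl, \Pi_\gl)$, which is shared by both settings.

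Concretely, I would consider the irreducible highest weight $\fg_\gl$-module $L'(\gl)$ of highest weight $\gl$. Since $\gD(\fk) \subset \gD_\gl$ and $\fk_\gl \simeq \fk$ (as in \S\ref{ssec:half-2}), $\gl$ is $\gD_\gl(\fk)$-dominant and $\gD_\gl$-integral by construction, so $L'(\gl)$ is a highest weight Harish-Chandra module with integral infinitesimal character for the Hermitian real form $\fg_{\gl,\R}$ of \S\ref{ssec:half-2}. In the two cases of interest, $\fg_{\gl,\R}$ is either $\f{so}^*(2n)$ (when $\fg_\R = \f{sp}(2n,\R)$) or $\f{sl}(2,\R) \times \f{so}(2n-1)$ (when $\fg_\R = \f{so}(2,2n-1)$); in the second case the compact $\f{so}(2n-1)$ factor contributes nothing to $\fp_\gl^+$, so in both cases the noncompact part of $\fg_\gl$ is simply laced. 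Theorem \ref{thm:main}(a) then yields $\AV(L'(\gl)) = \overline{\cO_{\gl,m'}}$ with $m' = \width(Y_\gl)$ computed inside $\gD_\gl$; by the observation closing \S\ref{ssec:half-2}, $m' = m$, so $\GKD(L'(\gl)) = \dim\cO_{\gl,m}$. Applying Lusztig's formula (\ref{eqn:LF3}) to both $L(\gl)$ and $L'(\gl)$ gives
$$\GKD(L(\gl)) = \#\gD^+ - a_\gl(w), \qquad \GKD(L'(\gl)) = \#\gD_\gl^+ - a_\gl(w'),$$
for respective minimal-length antidominant $w,w' \in W_\gl$. Subtracting and using $\#\gD^+ - \#\gD_\gl^+ = \#\gD(\fp^+) - \#\gD(\fp_\gl^+)$ (a direct consequence of $\gD(\fk) \subset \gD_\gl$) yields the proposition provided $a_\gl(w) = a_\gl(w')$.

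The main obstacle I anticipate is the identification $a_\gl(w) = a_\gl(w')$. The two elements satisfy antidominance conditions on $\gl+\rho$ and $\gl+\rho_\gl$ respectively, which differ by the shift $\rho-\rho_\gl$; this shift need not be $W_\gl$-invariant in general (for instance in the $C_n \to D_n$ reduction when $\fg_\R = \f{sp}(2n,\R)$, $\rho - \rho_\gl = (1,\ldots,1)$ is flipped by $s_{\eps_{n-1}+\eps_n}$), so the two elements need not coincide \emph{a priori}. I would resolve this either by a direct chamber analysis within $W_\gl$, exploiting the half-integral structure of $\gl$ to show that the minimal-length antidominant representatives agree, or by invoking the translation-principle equivalence between the $\fg$-block at non-integral infinitesimal character $\gl+\rho$ and the integral $\fg_\gl$-block at $\rho_\gl$, under which Gelfand--Kirillov dimension shifts by exactly $\#\gD^+ - \#\gD_\gl^+$.
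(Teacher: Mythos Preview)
Your overall strategy---apply Lusztig's formula twice and subtract, reducing to the integral case---is exactly the paper's. The difference is in the choice of comparison module for $\fg_\gl$, and your choice creates both the obstacle you flag and a second one you do not see.

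The paper compares $L(\gl)$ not to $L'(\gl)$ (highest weight $\gl$) but to $L_w' := L(-w\rho_\gl - \rho_\gl)$ for the \emph{same} $w\in W_\gl$ already chosen to make $w^{-1}(\gl+\rho)$ antidominant. Since $-w\rho_\gl$ is $\gD_\gl$-regular, formula~(\ref{eqn:LF2}) applied to $\fg_\gl$ gives $\GKD(L_w') = \#\gD_\gl^+ - a_\gl(w)$ directly; no second element $w'$ enters, so your ``main obstacle'' $a_\gl(w)=a_\gl(w')$ never arises.

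There is also a hidden gap in your version. The diagram relevant to $L'(\gl)$ as a $\fg_\gl$-module is $\{\alpha\in\gD(\fp_\gl^+) : \langle \gl+\rho_\gl,\alpha^\vee\rangle\le 0\}$, computed with $\rho_\gl$, not $\rho$. This is generally \emph{not} $Y_\gl$: for $\fg_\R=\f{sp}(2n,\R)$ one has $\rho-\rho_\gl=(1,\dots,1)$, so $\langle\rho-\rho_\gl,(\eps_i+\eps_j)^\vee\rangle=2$ and the two membership conditions differ. The observation at the end of \S\ref{ssec:half-2} that you cite only says $Y_\gl$ (defined with $\rho$) is the same poset in $\gD$ and $\gD_\gl$; it does not identify it with the $\rho_\gl$-diagram of $L'(\gl)$. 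So your appeal to Theorem~\ref{thm:main}(a) does not immediately give width $m$.

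The paper's choice of $L_w'$ resolves this cleanly as well: Proposition~\ref{prop:correct-w}, applied inside $\gD_\gl$ to the $\gD_\gl$-integral weight $\gl+\rho$, shows that for this minimal-length $w$ one has $Y_\gl = Y_{-w\rho_\gl-\rho_\gl}$. Hence $L_w'$ has diagram of width exactly $m$, the integral case gives $\AV(L_w')=\overline{\cO_{\gl,m}}$, and the subtraction goes through with no further work.
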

\begin{proof}  Choose $w\in W_\gl$ to be the element of minimal length such that $w^{-1}(\gl+\rho)$ is antidominant. Then, by (\ref{eqn:LF3}),
 \begin{equation*}
   \GKD(L(\gl))=\#(\gD^+)-a_\gl(w), w\in W_\gl.
 \end{equation*}
 
Write $L_w'$ for the irreducible highest weight module for $\fg_\gl$ of highest weight $-w\rho_\gl-\rho_\gl$, $w\in W_\gl$.  Then formula (\ref{eqn:LF2}) (applied to $\fg_\gl$) gives
\begin{equation*}
 \GKD(L_w')=\#(\gD_\gl^+)-a_\gl(w).
\end{equation*}
 
Therefore, 
\begin{equation}\label{eqn:LF}
 \begin{split}
  \GKD(L(\gl))&=(\#(\gD^+)-\#(\gD_\gl^+))+\GKD(L_w')  \\
    &=(\#(\gD(\fp^+))-\#(\gD(\fp_\gl^+)))+\GKD(L_w'), \text{as }\gD(\fk)\subset\gD_\gl. 
 \end{split}
\end{equation}

Our choice of $w\in W_\gl$ guarantees that $Y_\gl=Y_{-w\rho_\gl-\rho_\gl}$ (by Proposition \ref{prop:correct-w} applied to $\gD_\gl$).  
Therefore, we have that $\width(Y_{-w\rho_\gl-\rho_\gl})=\width(Y_\gl)=m$.  By the integral case we know that $\AV(L_w')=\bar{\cO_{\gl,m}}$ and the proposition follows from (\ref{eqn:LF}). 
\end{proof}

Theorem \ref{thm:main} now follows easily for the half-integral case by comparing dimensions of  orbits using Table \ref{tab:springer}.

Assume $\width(Y_\gl)=m$.

\noindent (a) $\fg_\R=\f{sp}(2n,\R), n\geq 2$.  The real rank is $r=n$.
\begin{align*}
  \GKD(L(\gl))&=(n(n+1)-n(n-1))+\dim(\cO_{\gl,m})  \\
     &=2n+m(2n-2m-1), \text{ by Table \ref{tab:springer}}, \\
     &=(n-m)(2m+1)  \\
     &=\begin{cases} \dim(\cO_{2m+1}), &m<\frac{n}{2}  \\ 
        \dim(\cO_n), &m=\frac{n}{2}.
       \end{cases}
\end{align*}

\noindent (b) $\fg_\R=\f{so}(2,2n-1), n\geq 2$.  The real rank is $r=2$.
\begin{align*}
 \GKD(L(\gl))&=((2n-1)-1)+\dim(\cO_{\gl,m'})  \\
     &=2n-2+m  \\
     &=\begin{cases} \dim(\cO_1), &m=0 \\
                     \dim(\cO_2), &m=1 (=\frac{r}{2}).
       \end{cases}
\end{align*}

Therefore,
\begin{equation*}
 \AV(L(\gl))=\begin{cases} \cO_{2m+1}, & m<\frac{r}{2} \\
                          \cO_{r}, &m=\frac{r}{2}
             \end{cases}
\end{equation*}
in both cases.


\section{Another approach}\label{sec:alt-proof}


In this section, we will give another proof for Theorem \ref{thm:main}.
In \cite{BX} and  \cite{BX2}, Bai--Xie and Bai--Xiao--Xie have found an algorithm to compute the  Gelfand--Kirillov dimensions of highest weight modules of classical Lie algebras. We recall their results and will give another proof for our Theorem \ref{thm:main}.

For  a totally ordered set $ \Gamma $, we  denote by $ \mathrm{Seq}_n (\Gamma)$ the set of sequences $ x=(x_1,x_2,\cdots, x_n) $   of length $ n $ with $ x_i\in\Gamma $.
We say $q=(q_1, \cdots, q_N)$ is the \textit{dual partition} of a partition $p=(p_1, \cdots, p_N)$ and write $q=p^t$ if $q_i$ is the length of $i$-th column of the Young diagram $p$. 
Let $p(x)$ be the shape of the Young tableau $Y(x)$ obtained by applying Robinson--Schensted algorithm to $x\in \mathrm{Seq}_n (\Gamma)$. For convenience, we set $q(x)=p(x)^t$.

For a Young diagram $p$, use $ (k,l) $ to denote the box in the $ k $-th row and the $ l $-th column.
We say the box $ (k,l) $ is \textit{even} (resp. \textit{odd}) if $ k+l $ is even (resp. odd). Let $ p_i \ev$ (resp. $ p_i\od $) be the number of even (resp. odd) boxes in the $ i $-th row of the Young diagram $ p $. 
One can easily check that \begin{equation}\label{eq:ev-od}
p_i\ev=\begin{cases}
\left\lceil \frac{p_i}{2} \right\rceil&\text{ if } i \text{ is odd},\\
\left\lfloor \frac{p_i}{2} \right\rfloor&\text{ if } i \text{ is even},
\end{cases}
\quad p_i\od=\begin{cases}
\left\lfloor \frac{p_i}{2} \right\rfloor&\text{ if } i \text{ is odd},\\
\left\lceil \frac{p_i}{2} \right\rceil&\text{ if } i \text{ is even}.
\end{cases}
\end{equation}
Here for $ a\in \mathbb{R} $, $ \lfloor a \rfloor $ is the largest integer $ n $ such that $ n\leq a $, and $ \lceil a \rceil$ is the smallest integer $n$ such that $ n\geq a $. For convenience, we set
\begin{equation*}
p\ev=(p_1\ev,p_2\ev,\cdots)\quad\mbox{and}\quad p\od=(p_1\od,p_2\od,\cdots).
\end{equation*}

For $ x=(x_1,x_2,\cdots,x_n)\in \mathrm{Seq}_n (\Gamma) $, set
\begin{equation*}
\begin{aligned}
{x}^-=&(x_1,x_2,\cdots,x_{n-1}, x_n,-x_n,-x_{n-1},\cdots,-x_2,-x_1).
\end{aligned}
\end{equation*}

\begin{prop}[{\cite[Thm. 6.4]{BX}}]\label{thm:associatedA}
	Let $L(\lambda)$ be a highest weight Harish-Chandra module of $G_{\mathbb{R}}$ with $\mathfrak{g}_{\mathbb{R}}=\mathfrak{su}(p,q)$  and $\lambda+\rho=(t_1,\cdots,t_n)\in\mathfrak{h}^*$. Set $q=q(\lambda)=(q_1, \cdots, q_{n})$. Then  $AV(L(\lambda))=\overline{\mathcal{O}_{k'(\lambda)}}$ with
	\[
	k'(\lambda)=\begin{cases}
	q_2 & \text{ if } \lambda\  \text{is integral},\\
	\min\{p, q\} & \text{ otherwise}.
	\end{cases}
	\]
\end{prop}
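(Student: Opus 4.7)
The plan is to derive Proposition~\ref{thm:associatedA} by combining Theorem~\ref{thm:main} with an identity from Robinson--Schensted theory. The argument splits by integrality.

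For nonintegral $\lambda$, since $\fg_\R=\su(p,q)$ has simply laced root system (type $A_{n-1}$), case (d) of Theorem~\ref{thm:main} yields $k(\lambda)=r=\min\{p,q\}$, matching $k'(\lambda)$ in this case.

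For integral $\lambda$, Theorem~\ref{thm:main}(a) gives $k(\lambda)=\width(Y_\lambda)$, so it suffices to prove the combinatorial identity
\begin{equation*}
\width(Y_\lambda)=q_2(\lambda).
\end{equation*}
By $\Delta(\fk)$-dominance, the sequence $\lambda+\rho=(t_1,\ldots,t_n)$ decomposes into two strictly decreasing blocks of lengths $p$ and $q$. Hence no weakly increasing subsequence of $(t_i)$ exceeds length $2$, so by Greene's theorem the RSK shape $p(\lambda)$ satisfies $p_1(\lambda)\leq 2$. Writing $p(\lambda)=(2^k,1^{n-2k})$ for some $k\geq 0$ yields $q(\lambda)=(n-k,k)$ and $q_2(\lambda)=k$. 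Greene's theorem also gives $q_1+q_2=n$, realized concretely by the two blocks as a partition of $(t_i)$ into two strictly decreasing subsequences. Thus the identity reduces to $\width(Y_\lambda)=n-q_1(\lambda)$, i.e., the width of the diagram equals the complement of the longest strictly decreasing subsequence of $(t_i)$.

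To prove this last identity, I would describe antichains of size $m$ in $Y_\lambda$ as non-crossing matchings $(i_s,j_s)_{s=1}^m$ in the bipartite graph with edge $i\sim j$ ($i\leq p<j$) whenever $t_i\leq t_j$, and longest strictly decreasing subsequences as determined by a transition index between the two blocks. A greedy uncrossing argument should then pair these extremal quantities into a duality $m+L=n$. The main obstacle is making this duality precise, especially pinpointing the role of the transition index that ties the two extrema together. An alternative route would bypass the combinatorial identity altogether by applying Lusztig's $a$-function formula on $S_n$ (as in \cite{BX}) to compute $\GKD(L(\lambda))$ directly in terms of $p(\lambda)$ and then match against $\dim(\cO_k)$ via Table~\ref{tab:springer}, reading off $k = q_2(\lambda)$.
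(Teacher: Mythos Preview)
The paper does not prove this proposition; it is quoted from \cite{BX} and then \emph{used} in \S\ref{sec:alt-proof}, together with Lemma~\ref{Minimax}, to give the second proof of Theorem~\ref{thm:main} for $\su(p,q)$. Your argument runs the logic in the opposite direction, deducing Proposition~\ref{thm:associatedA} from Theorem~\ref{thm:main}. That is legitimate only if you invoke the \S\ref{sec:proof} proof of Theorem~\ref{thm:main}, which is independent of \cite{BX}; you should make this explicit, since relative to the \S\ref{sec:alt-proof} proof your argument is circular.

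On the combinatorial identity $\width(Y_\lambda)=q_2(\lambda)$: the paper already contains it. Lemma~\ref{Minimax}, as applied in the $\su(p,q)$ subsection of \S\ref{sec:alt-proof}, identifies $q_2$ with the largest $m$ admitting indices $i_1<\cdots<i_m\le p<j_1<\cdots<j_m$ with $t_{i_s}\le t_{j_s}$, which is exactly the description of a maximal antichain in $Y_\lambda$ given in \S2.4.1. Your Greene-theorem route is also viable, but you have mis-located the missing step. Since $p_1(\lambda)\le 2$, Greene's theorem already gives that $q_2$ equals the maximum number of pairwise \emph{disjoint} pairs $(i,j)$ with $i\le p<j$ and $t_i\le t_j$ (this is the statement $p_1+\cdots+p_m=2m$ for $m\le q_2$, each weakly increasing subsequence of length~$2$ being such a pair). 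What remains is not a vague ``duality $m+L=n$'' but the short observation that any disjoint family of such pairs can be sorted into antichain form: relabel so that $i_1<\cdots<i_m$, sort the second coordinates as $j_{(1)}<\cdots<j_{(m)}$, and note that if $t_{i_s}>t_{j_{(s)}}$ for some $s$ then (since both blocks are strictly decreasing) each of $i_1,\ldots,i_s$ could only have been matched with one of $j_{(1)},\ldots,j_{(s-1)}$, contradicting disjointness. This pigeonhole step is essentially the content of the proof of Lemma~\ref{Minimax}, so your alternative route does not ultimately bypass that lemma.
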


\begin{prop}[{\cite[Thm. 6.2]{BX2}}]\label{thm:associated}
	Let $L(\lambda)$ be a highest weight Harish-Chandra module of $G_\R$ (classical Hermitian type) with $  \lambda+\rho=(t_1,\cdots,t_n)\in\mathfrak{h}^*$. Set $q=q(\lambda^-)=(q_1, \cdots, q_{2n})$. Then $AV(L(\lambda))=\overline{\mathcal{O}_{k'(\lambda)}}$ with $ k'(\lambda) $ given as follows.

	\begin{itemize}
		\item [(1)] $ \fg_\R=\sp(2n,\mathbb{R}) $ with $ n\geq 2 $. Then 
		\[
		k'(\lambda)=\begin{cases}
		2q_2\od & \text{ if } \lambda_1\in\mathbb{Z},\\
		2q_2\ev+1 & \text{ if } \lambda_1\in\frac12+\mathbb{Z},\\
		n & \text{ otherwise}.
		\end{cases}
		\]
		Note that $q_2\od=[\frac{q_2+1}{2}]$ and $q_2\ev=[\frac{q_2}{2}]$.
		\item [(2)] $\fg_\R=\so^*(2n) $ with $ n\geq 4 $. Then 
		\[
		k'(\lambda)=\begin{cases}
		q_2\ev & \text{ if } \lambda_1\in\frac12\mathbb{Z},\\
		\left\lfloor \frac{n}{2} \right \rfloor & \text{ otherwise}.
		\end{cases}
		\]
		\item [(3)] $\fg_\R=\so(2,2n-1) $ with $ n\geq 3 $.   Then\[
		k'(\lambda)=\begin{cases}
		0& \text{ if } \lambda_1-\lambda_2\in\mathbb{Z}, \lambda_1>\lambda_2,\\
		1 & \text{ if } \lambda_1-\lambda_2\in\frac12+\mathbb{Z}, \lambda_1>0,\\
		2 & \text{ otherwise}.
		\end{cases}
		\]
		\item [(4)] $\fg_\R=\so(2,2n-2)$ with $ n\geq 4 $.   Then\[
		k'(\lambda)=\begin{cases}
		0& \text{ if } \lambda_1-\lambda_2\in\mathbb{Z}, \lambda_1>\lambda_2,\\
		1 &  \text{ if } \lambda_1-\lambda_2\in \mathbb{Z}, -|\lambda_n|<  \lambda_1\leq \lambda_2\\
		2 & \text{ otherwise}.
		\end{cases}
		\]
	\end{itemize}
\end{prop}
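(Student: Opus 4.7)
My plan is to derive Proposition \ref{thm:associated} from Theorem \ref{thm:main} by identifying the second column length $q_2$ of the Robinson--Schensted shape $p(\lambda^-)$ (together with its $\ev/\od$ refinement) with the width $m = \width(Y_\lambda)$.

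First I would re-examine the combinatorial description of antichains in $Y_\lambda$ outlined in the example following Theorem \ref{thm:main}. For $\fg_\R = \sp(2n,\R)$ with $\lambda + \rho = (t_1, \ldots, t_n)$, an antichain of size $m$ in $Y_\lambda$ corresponds to a choice of nested index pairs $i_1 < \cdots < i_m \leq j_m < \cdots < j_1$ with $t_{i_k} + t_{j_k} \leq 0$, and analogous descriptions hold for $\so^*(2n)$ and for types $B$, $D$. In the symmetrized sequence $\lambda^- = (t_1, \ldots, t_n, -t_n, \ldots, -t_1)$, the inequality $t_i + t_j \leq 0$ is precisely $t_i \leq -t_j$, so antichains in $Y_\lambda$ correspond to pairs of mutually compatible entries drawn from the two halves of $\lambda^-$. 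This is the combinatorial dictionary I intend to make rigorous.

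Next, I would invoke Greene's theorem on the RS correspondence: the partial sum $q_1 + \cdots + q_k$ equals the maximum total length of $k$ disjoint weakly decreasing subsequences of $\lambda^-$. Because $\lambda^-$ is strictly decreasing on each half, the trivial subsequence $\lambda^-$ restricted to $(t_1,\ldots,t_n)$ gives $q_1 = n$, and the extra length captured by $q_2$ is governed by the longest decreasing subsequence that meaningfully uses entries from both halves---exactly an antichain structure in $Y_\lambda$. I expect the identification $q_2 = 2\width(Y_\lambda)$ in type $C_n$, with the $\ev/\od$ split in $q_2$ encoding whether the witnessing antichain uses a short root $2\eps_i$ (integral case) or is built entirely from long roots $\eps_i + \eps_j$ with $i < j$ (half-integral case). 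Parallel but simpler identifications should hold for $\so^*(2n)$, where only long roots appear, and for types $B$ and $D$, where the small real rank forces $m \in \{0,1,2\}$ and the classification collapses to a finite case check.

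Finally, I would translate these column-length identifications into the explicit integers $k'(\lambda)$ using the orbit dimension formulas of Table~\ref{tab:springer}, comparing them side-by-side with the case analysis of Theorem \ref{thm:main}. The main obstacle will be the careful Greene-type bookkeeping on the symmetrized sequence $\lambda^-$: specifically, proving that the symmetry $x_i \leftrightarrow -x_{2n+1-i}$ forces the shape $p(\lambda^-)$ to be self-conjugate, and then tracking the parity of the lowest box in the second column so as to distinguish short-root from long-root antichains. The nonintegral cases, where $Y_\lambda$ lives in a proper subposet $\Delta(\fp_\lambda^+)$, will require a separate reduction analogous to the half-integral treatment in Section~\ref{ssec:half}; however, since passage from $\Delta$ to $\Delta_\lambda$ merely discards entries of $\lambda^-$ that cannot participate in any valid nested pair, the RS shape formulas should transfer cleanly to those cases.
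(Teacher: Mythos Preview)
The proposition you are asked to prove is not proved in the paper at all: it is quoted verbatim from \cite[Thm.~6.2]{BX2} and used as \emph{input}. In Section~\ref{sec:alt-proof} the paper runs the implication in the opposite direction to yours: starting from Proposition~\ref{thm:associated} (and its companions Propositions~\ref{thm:associatedA} and~\ref{67}), it establishes the combinatorial identification between $q_2$ and $\width(Y_\lambda)$ via Lemma~\ref{Minimax} and Corollary~\ref{pair}, and then reads off Theorem~\ref{thm:main}. Your plan is the reverse: take Theorem~\ref{thm:main} (independently proved in Section~\ref{sec:proof}) and the same combinatorial bridge to recover the formulas for $k'(\lambda)$. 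This is logically sound and not circular, but it is a different route from the paper, which simply cites \cite{BX2}.

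Two remarks on the details of your sketch. First, the Greene--theorem/self-conjugacy route is not what is needed: for a $(p,q)$-dominant sequence the tableau $Y(\lambda^-)$ has at most \emph{two} columns, not a self-conjugate shape, and the precise link between $q_2$ and nested systems of inequalities is exactly Lemma~\ref{Minimax} and Corollary~\ref{pair}; you should use those rather than appeal to Greene. Second, your expected identity ``$q_2 = 2\,\width(Y_\lambda)$'' in type $C_n$ is off by a parity correction: the paper's Section~5.2 analysis shows that in the integral case $2q_2^{\mathrm{od}} = 2m(\lambda)$ (with $m(\lambda)=t'+1$ when $q_2=2t'+1$ because the short root $2\eps_l$ contributes), and in the half-integral case $2q_2^{\mathrm{ev}}+1 = 2m(\lambda)+1$. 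Getting these parity bookkeeping details right, and checking the boundary case $m=(r+1)/2$ in Theorem~\ref{thm:main}(b), is the real work; once that is done your reversal goes through.
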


Let $\Delta$ be the root system  of $E_6$ or $E_7$. As usual $\Delta$ can be realized as a subset of $\mathbb{R}^8$. The simple roots are 
\begin{align*}
\alpha_1&=\frac{1}{2}(\ep_1-\ep_2-\ep_3-\ep_4-\ep_5-\ep_6-\ep_7+\ep_8) ,\\
\alpha_2&=\ep_1+\ep_2,\\
\alpha_k&=\ep_{k-1}-\ep_{k-2} \text{ with }3\leq k\leq n,
\end{align*}
where $n=6, 7$. 
Here we use notations in \cite{EHW:83}. Let $\Pi$ denote the set of simple roots in $\Delta^+$. Exactly one root in $\Pi$ is in $\Delta(\fp^+)$.
In particular, for $\mathfrak{e}_{6(-14)}$, this simple root is $\alpha_1$; for $\mathfrak{e}_{7(-25)}$, this simple root is  $\alpha_7$. 


For an integral weight $ \lambda\in \mathfrak{h}^*$, recall that $ \Delta_\lambda^+=\{\alpha\in\Delta^+\mid\IP{\lambda+\rho}{\alpha^\vee}>0\} $.
\begin{prop}[{\cite[Thm. 7.1]{BX}}]\label{67}
	Let $L(\lambda)$ be a highest weight Harish-Chandra module of  $G_{\mathbb{R}}$. If $\lambda\in\mathfrak{h}^*$ is not integral, then $AV(L(\lambda))=\overline{\mathcal{O}_{k'(\lambda)}}$ with $k'(\lambda)=2$ (for $\mathfrak{g}_{\mathbb{R}}=\mathfrak{e}_{6(-14)}$) or $3$ (for $\mathfrak{g}_{\mathbb{R}}=\mathfrak{e}_{7(-25)}$). In the case when $\lambda$ is integral, $ k'(\lambda) $ is given as follows.
	\begin{itemize}
		\item [(1)] For $\mathfrak{g}_{\mathbb{R}}=\mathfrak{e}_{6(-14)}$, we have
		\[
		k'(\lambda)=\begin{cases}
		0& \text{ if } \Delta_\lambda^+\cap A_1\neq\emptyset,\\
		1& \text{ if } \Delta_\lambda^+\cap A_2\neq\emptyset \text{ and } \Delta_\lambda^+\cap A_1=\emptyset ,\\
		2& \text{ if } \Delta_\lambda^+\cap A_2=\emptyset,
		\end{cases}
		\]
		where $A_2=\{\frac{1}{2}(\pm(\ep_1+\ep_2+\ep_3-\ep_4)-\ep_5-\ep_6-\ep_7+\ep_8)\}$ and $A_1=\{\alpha_1\}$.
		
		\item [(2)] For $\mathfrak{g}_{\mathbb{R}}=\mathfrak{e}_{7(-25)}$, we have
		\[
		k'(\lambda)=\begin{cases}
		0& \text{ if } \Delta_\lambda^+\cap A_1\neq\emptyset,\\
		1& \text{ if } \Delta_\lambda^+\cap A_2\neq\emptyset \text{ and } \Delta_\lambda^+\cap A_1=\emptyset ,\\
		2& \text{ if } \Delta_\lambda^+\cap A_3\neq\emptyset \text{ and } \Delta_\lambda^+\cap A_2=\emptyset ,\\
		3& \text{ if } \Delta_\lambda^+\cap A_3=\emptyset,
		\end{cases}
		\]
		where $A_3=\{\frac{1}{2}(\pm(\ep_1-\ep_2-\ep_3+\ep_4)-\ep_5+\ep_6-\ep_7+\ep_8), \ep_5+\ep_6\}$, $A_2=\{\pm\ep_1+\ep_6\}$ and $A_1=\{-\ep_5+\ep_6\}$.
	\end{itemize}
	
\end{prop}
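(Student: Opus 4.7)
The plan is to deduce this proposition as a direct specialization of Theorem~\ref{thm:main} to the two exceptional Hermitian types, combined with the combinatorial analysis of the posets $\Delta(\fp^+)$ carried out in the proofs of Proposition~\ref{prop:width} for $\mathfrak{e}_{6(-14)}$ and $\mathfrak{e}_{7(-25)}$.

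First handle the nonintegral case. Since $E_6$ and $E_7$ are simply laced, any nonintegral $\lambda \in \Lambda^+(\fk)$ falls into case~(d) of Theorem~\ref{thm:main}, yielding $k'(\lambda)=r$. This gives $k'(\lambda)=2$ for $\mathfrak{e}_{6(-14)}$ and $k'(\lambda)=3$ for $\mathfrak{e}_{7(-25)}$. Alternatively, as observed in Remark~\ref{rem:ex-av}, for nonintegral $\lambda$ in the simply laced setting the generalized Verma module is already irreducible, so $\AV(L(\lambda))=\fp^+=\overline{\mathcal{O}_r}$.

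Now treat the integral case. By Theorem~\ref{thm:main}(a) we have $k'(\lambda)=\width(Y_\lambda)$, so the task reduces to translating each condition on $\Delta_\lambda^+ \cap A_i$ into a condition on $\width(Y_\lambda)$. For integral $\lambda$ and $\alpha \in \Delta(\fp^+)$ one has the direct equivalence $\alpha \in Y_\lambda \iff \alpha \notin \Delta_\lambda^+$, so $A_i \cap \Delta_\lambda^+=\emptyset$ is the same as $A_i \subset Y_\lambda$. Since $Y_\lambda$ is a lower-order ideal by Lemma~\ref{L: lower order ideals}, everything reduces to identifying each $A_i$ with the critical antichain that controls width $\geq i$ in the Hasse diagram.

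The combinatorial input from the proofs of Proposition~\ref{prop:width} for exceptional types is precisely what is needed: a lower-order ideal $Y \subseteq \Delta(\fp^+)$ satisfies $\width(Y)\geq 1$ iff the unique minimal root of the poset lies in $Y$; $\width(Y)\geq 2$ iff a specific two-element antichain $\{\beta_1,\beta_2\}$ lies in $Y$; and in the $E_7$ case, $\width(Y)\geq 3$ iff a specific three-element antichain $\{\gamma_1,\gamma_2,\gamma_3\}$ lies in $Y$. Combining this with the equivalence from the previous paragraph yields the trichotomy for $\mathfrak{e}_{6(-14)}$ and the tetrachotomy for $\mathfrak{e}_{7(-25)}$ exactly as stated. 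The main obstacle will be the concrete root-system verification that the explicit expressions given for $A_1$, $A_2$, $A_3$ in the proposition indeed coincide with the distinguished minimum, pair, and triple read off from the Hasse diagrams in the Appendix; this is a direct calculation in the coordinate realizations of $E_6$ and $E_7$, where one checks that the listed vectors are roots lying in $\Delta(\fp^+)$, form antichains of the stated cardinality, and occupy the correct critical positions. Once this identification is verified, the proposition follows.
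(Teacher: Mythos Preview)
Your argument is mathematically sound, and the combinatorial core---identifying the sets $A_i$ with the distinguished antichains $\{\alpha_*\}$, $\{\beta_1,\beta_2\}$, $\{\gamma_1,\gamma_2,\gamma_3\}$ in the Hasse diagram and translating $A_i\cap\Delta_\lambda^+=\emptyset$ into $A_i\subset Y_\lambda$---is exactly what the paper does in \S5.6--5.7. The difference is the logical direction. In the paper, Proposition~\ref{67} is not proved at all: it is imported from \cite{BX} as an external input, and then the identification you describe is used to show that the $k'(\lambda)$ of Proposition~\ref{67} equals the $k(\lambda)$ of Theorem~\ref{thm:main}, thereby giving a second, independent proof of Theorem~\ref{thm:main}. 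You run the implication the other way, deducing Proposition~\ref{67} from Theorem~\ref{thm:main}.

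This is harmless provided you invoke the proof of Theorem~\ref{thm:main} from Section~\ref{sec:proof}, which is independent of \cite{BX}. But be aware that in the context where Proposition~\ref{67} sits in the paper (Section~\ref{sec:alt-proof}), your derivation would be circular: the whole purpose of that section is to reprove Theorem~\ref{thm:main} \emph{from} Proposition~\ref{67} (and its classical analogues). So your write-up should make explicit that you are appealing to the Section~\ref{sec:proof} proof, not the Section~\ref{sec:alt-proof} one. With that caveat, your route and the paper's are essentially the same equivalence read in opposite directions; what the paper's arrangement buys is a genuinely independent second proof of the main theorem, while your arrangement buys a self-contained derivation of the cited result without appeal to \cite{BX}.
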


\begin{lemma}\label{Minimax}
	Suppose $\mu=(t_1,...,t_n, s_1,...,s_m)$ is a $(n,m)$-dominant sequence with $t_n\leq s_1$.   Then applying the Schensted insertion algorithm to $\mu$. We can get a Young tableau  $Y(\mu)$  with two columns. And $c_2(Y(\mu))$  is precisely the largest integer $k$ for which we have
	$$t_{n-k+1}\leq s_{1},...,t_{n-1}\leq s_{k-1},t_{n}\leq s_{k}.$$	
	The integer $k $ is also equal to the maximum integer $k_1$ for which  there exists a sequence of indices
	$$1\leq i_1<i_2<...<i_{k_1}\leq n,  1\leq l_1<l_2<...<l_{k_1}\leq m,$$
	such that
	\begin{equation}  t_{i_1}\leq s_{l_1},...,t_{i_{k_1}}\leq s_{l_{k_1}}.
	\end{equation}
\end{lemma}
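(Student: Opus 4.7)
The plan is to establish the lemma in three stages: show that $Y(\mu)$ has exactly two columns, identify $c_2(Y(\mu))$ with the integer $k^{*} := \max\{k : t_{n-k+j}\le s_j,\ j=1,\ldots,k\}$, and verify that $k^{*}$ equals the monotone-matching number $k_1^{*}$.

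For the first stage, I would observe that $\mu$ is the concatenation of two strictly decreasing blocks (the $t$-block and the $s$-block), and therefore $\mu$ itself is a union of two strictly decreasing subsequences. By the dual form of Greene's theorem, $c_1(Y(\mu))+c_2(Y(\mu))$ equals the maximum total length of a union of two strictly decreasing subsequences, which is both $\ge n+m$ (using all of $\mu$) and $\le n+m$ (the total length). Hence $c_1+c_2=n+m$ and the tableau has exactly two columns.

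For the second stage, it suffices (since $c_2=n+m-c_1$) to prove that the longest strictly decreasing subsequence of $\mu$ has length $n+m-k^{*}$. Any such subsequence takes the form $t_{i_1}>\cdots>t_{i_a}>s_{j_1}>\cdots>s_{j_b}$, and by monotonicity of the two blocks I can reparametrize by $(p,q)=(i_a,j_1)$, giving length $p+m-q+1$ subject to $t_p>s_q$. For the upper bound, if $t_p>s_q$ then $k^{*}\ge n-p+q$ would force $t_p\le t_{n-k^{*}+q}\le s_q$ by the defining inequalities of $k^{*}$ (taking $j=q$) combined with the decreasing nature of the $t$-block, a contradiction; hence $k^{*}\le n-p+q-1$, proving $c_1\le n+m-k^{*}$. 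For the matching lower bound, when $k^{*}=\min(n,m)$ the pure $t$- or $s$-subsequence already has length $n+m-k^{*}$; otherwise the defining condition of $k^{*}$ fails at level $k^{*}+1$, producing an index $j\in[1,k^{*}+1]$ with $t_{n-k^{*}-1+j}>s_j$, and $(p,q)=(n-k^{*}-1+j,\,j)$ yields an explicit strictly decreasing subsequence of length exactly $n+m-k^{*}$. Combining yields $c_2=k^{*}$.

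For the third stage, $k^{*}=k_1^{*}$ is a pure monotonicity argument. The right-aligned choice $i_j=n-k^{*}+j$, $l_j=j$ shows $k_1^{*}\ge k^{*}$; conversely, given any monotone matching of length $k_1$, the elementary bounds $i_{k_1-j+1}\le n-j+1$ and $l_{k_1-j+1}\ge k_1-j+1$ together with the decreasing nature of the two blocks yield the chain $t_{n-j+1}\le t_{i_{k_1-j+1}}\le s_{l_{k_1-j+1}}\le s_{k_1-j+1}$ for $j=1,\ldots,k_1$, matching the defining condition of $k^{*}$ at level $k_1$ and giving $k_1\le k^{*}$. The main technical subtlety will be the lower bound on $c_1$ in the intermediate case $k^{*}<\min(n,m)$, where translating the failure of the defining inequalities at level $k^{*}+1$ into an explicit long strictly decreasing subsequence is a careful indexing argument in which off-by-one errors are easy to make; the remaining steps are clean applications of Greene's theorem and monotonicity.
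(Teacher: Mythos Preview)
Your proposal is correct and takes a genuinely different route from the paper's proof. The paper does not use Greene's theorem at all; instead it invokes the ``white--black pair model'' developed in the companion reference \cite{BX} and cites \cite[Prop.~5.3 and Thm.~5.6]{BX} to identify $c_2(Y(\mu))$ directly with a certain greedy matching number between the $t$-block and the $s$-block. The equality $k=k_1$ is then obtained by a reindexing-plus-contradiction argument (assume $k<k_1$, push the $i$'s to the rightmost possible positions, then replay the greedy pairing to contradict maximality of $k$), and the ``aligned'' form $t_{n-k+1}\le s_1,\dots,t_n\le s_k$ is deduced last by a further monotone reindexing.

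Your approach, by contrast, is entirely self-contained within classical RSK theory: Greene's theorem handles the shape constraint ($q_1+q_2=n+m$, hence at most two columns) and reduces the computation of $c_2$ to determining the longest strictly decreasing subsequence of $\mu$, which you compute to be $n+m-k^{*}$ by an explicit two-sided bound. The final identity $k^{*}=k_1^{*}$ is the same monotone reindexing that the paper uses, just packaged more cleanly. What your approach buys is independence from the machinery of \cite{BX} and a transparent reason why the answer is what it is (it is the complement of the longest decreasing run); what the paper's approach buys is consistency with the framework already set up in \cite{BX}, which is reused throughout Section~\ref{sec:alt-proof}. One minor point: in your Stage~1, $c_1+c_2=n+m$ only gives \emph{at most} two columns; ``exactly two'' needs $c_2\ge 1$, which follows immediately from $t_n\le s_1$ (giving $k^{*}\ge 1$) but should be said.
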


\begin{proof}In  \cite{BX},  a  white-black pair model was  constructed to compute $c_2(Y(\mu))$. From this model and  \cite[Prop. 5.3 and Thm. 5.6]{BX},
	we know $k(\mu):=c_2(Y(\mu))$ is equal to the maximum integer $k$ for which we can divide the sequence $(s_1,...,s_m)$ into several parts such that $t_n\leq s_{j_k}$ and $t_n> s_{j_k+1}$,...,$t_{n-k+1}\leq s_{j_{1}}$ and $t_{n-k+1}> s_{j_1+1}$, with $1\leq j_1<j_2<...<j_k\leq m$. Note that $j_k$ is the first index sucht $t_n\leq s_j$.
	
	We denote a  maximum integer $k_1$ for which  there exists a sequence of indices
	$$1\leq i_1<i_2<...<i_{k_1}\leq n,  1\leq l_1<l_2<...<l_{k_1}\leq m,$$
	such that
	\begin{equation}\label{mn}  t_{i_1}\leq s_{l_1},...,t_{i_{k_1}}\leq s_{l_{k_1}}.
	\end{equation}
	
	So we have $k\leq k_1$. We suppose $k<k_1$.
	
	Since $t_i-t_j \in \mathbb{Z}_{> 0}$ for $1\leq i<j\leq n$,  we can make a new choice for these indices such that $$i_1=n-k_{1}+1, i_2=n-k_1,...,i_{k_1}=n.$$
	Then we still have the condition (\ref{mn}): $$ t_{n-k_{1}+1}\leq s_{l_1},...,t_{n}\leq s_{l_{k_1}}.$$
	From our white-black pair model, we can choose $l_{k_1}=j_k$ since  $t_n\leq s_{j_k}$ and $t_n> s_{j_k+1}$ (i.e., $j_k$ is the first index such that $t_n\leq s_j$). Similarly we can choose
	$l_{k_1-1}=j_{k-1}$,...,$l_{k_1-k+1}=j_1$. Then we choose $l_{k_1-k}=j'_1$ such that $t_{n-k}\leq s_{j'_1}=s_{l_{k_1-k}}$ and $t_{n-k}>s_{j'_1+1}$. We continue this process until we choose $l_1=j'_{k_1-k}$ such that
	$t_{n-k_1+1}\leq s_{j'_{k_1-k}}=s_{l_1}$ and $t_{n-k_1+1}> s_{j'_{k_1-k+1}}$.
	Then from the definition of $k$, we must have $k_1\leq k$, which is a contradiction. So we have $k=k_1$.

	Since $s_i-s_j \in \mathbb{Z}_{> 0}$ for $1\leq i<j\leq m$, we make a new choice such that  $$ j_1=1,...,j_k=k.$$
	Then we get $$t_{n-k+1}\leq s_{1},...,t_{n-1}\leq s_{k-1},t_{n}\leq s_{k}.$$	
	
	Since this process is invertible, we can see that $k$ is precisely the largest number satisfying the above condition.

\end{proof}

\begin{cor}\label{pair}
	Suppose $\lambda=(t_1,...,t_n, -t_n,...,-t_1)$ is a $(n,n)$-dominant sequence with $t_n\leq -t_n$.   Then applying the Schensted insertion algorithm to $\lambda$. We can get a Young tableau  $Y_{\lambda}$  with two columns. And $c_2(Y_{\lambda})$  is precisely the largest integer $t$ for which we have
	$$t_{n-t+1}\leq -t_{n},t_{n-t+2}\leq -t_{n-1}...,t_{n-1}\leq -t_{n-t+2},t_{n}\leq -t_{n-t+1}.$$
	
\end{cor}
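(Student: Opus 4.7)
The plan is to recognize the corollary as an immediate specialization of Lemma \ref{Minimax}, after making the obvious substitution that turns the ``folded'' sequence
$\lambda=(t_1,\ldots,t_n,-t_n,\ldots,-t_1)$ into the two-block form $\mu=(t_1,\ldots,t_n,s_1,\ldots,s_m)$ that the lemma is stated for. Concretely, set $m=n$ and
$$
s_i := -t_{n-i+1},\qquad i=1,2,\ldots,n,
$$
so that $s_1=-t_n$, $s_2=-t_{n-1}$, \ldots, $s_n=-t_1$, and hence $\mu=\lambda$.

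Next I would check that the hypotheses of Lemma \ref{Minimax} are in force. The $(n,n)$-dominance of $\lambda$ means (by the convention in force in \S\ref{sec:alt-proof}) that $t_1>t_2>\cdots>t_n$ and $-t_n>-t_{n-1}>\cdots>-t_1$; the latter is the same statement as the former, so both dominance requirements hold for the pair $(t_i)$, $(s_i)$. The remaining hypothesis $t_n\leq s_1$ of the lemma is precisely the standing assumption $t_n\leq -t_n$ of the corollary. Therefore Lemma \ref{Minimax} applies verbatim: the Young tableau $Y(\mu)=Y_{\lambda}$ has at most two columns, and $c_2(Y_{\lambda})$ equals the largest integer $t$ such that
$$
t_{n-t+1}\leq s_{1},\quad t_{n-t+2}\leq s_{2},\quad \ldots,\quad t_{n-1}\leq s_{t-1},\quad t_{n}\leq s_{t}.
$$
Substituting back $s_i=-t_{n-i+1}$ yields exactly the chain of inequalities
$$
t_{n-t+1}\leq -t_n,\ t_{n-t+2}\leq -t_{n-1},\ \ldots,\ t_{n-1}\leq -t_{n-t+2},\ t_n\leq -t_{n-t+1},
$$
which is the conclusion of the corollary.

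There is essentially no obstacle here beyond bookkeeping: the work has already been done in Lemma \ref{Minimax} (including the nontrivial white/black pair model argument used to identify $c_2$ with the maximal length of such a matching chain of inequalities). The only points that warrant a brief verification in the write-up are (i) that the reversal $s_i=-t_{n-i+1}$ correctly encodes the ``$-t_n,\ldots,-t_1$'' tail of $\lambda$, and (ii) that $(n,n)$-dominance together with $t_n\leq -t_n$ supplies both dominance conditions and the linking inequality $t_n\leq s_1$ needed by the lemma. Once these are noted, the corollary is immediate.
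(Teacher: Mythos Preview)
Your proposal is correct and is exactly the intended argument: the paper states this as a corollary of Lemma~\ref{Minimax} without proof, and the only content is the substitution $m=n$, $s_i=-t_{n-i+1}$ that you carry out. Your verification of the $(n,n)$-dominance and the linking inequality $t_n\le s_1$ is the right bookkeeping, and the back-substitution recovers the displayed chain of inequalities verbatim.
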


Now we will give our proof for Theorem \ref{thm:main}  by the algorithms in \cites{BX,BX2}. In the following we denote $m(\lambda):=\operatorname{width}(Y_\lambda)$. We will show that the values of $k'(\lambda)$ in Proposition \ref{thm:associatedA}, Proposition \ref{thm:associated} and Proposition \ref{67} will equal to the values of $k(\lambda)$ given  in Theorem \ref{thm:main}.

\subsection{$\mathfrak{g}_{\mathbb{R}}=\mathfrak{su}(p,q)$}
We use the notations from \cite{EHW:83}. Suppose $L(\lambda)$ is a  highest weight Harish-Chandra ($\mathfrak{g}$,$K$)-module with highest weight $\lambda$. We denote $\lambda+\rho=(t_1,...,t_n)$. Suppose $\lambda$ is integral. Then from \cite{EHW:83}, we have $t_i-t_j \in \mathbb{Z}_{> 0}$ for $1\leq i\leq p$, $p+1\leq j\leq n$. From \cite{BX}, we say that $(t_1,...,t_n)$ is $(p,q)$-dominant.

By using Schensted insertion  algorithm for $(t_1,...,t_n)$, we can get  a Young tableau $Y_{\lambda}$ which consists of
at most two columns. Now we suppose the number of entries in the second column of  $Y_{\lambda}$ is $c_2(Y_{\lambda})=q_2$, then $c_1(Y_{\lambda})=n-q_2$. From Proposition \ref{thm:associatedA}, we know  
$ AV(L(\lambda))= \overline{\mathcal{O}_{q_2}}. $

Then from Lemma \ref{Minimax}, we know $q_2$ is equal to the maximum integer $t$ for which there exists a sequence of indices
$$1\leq i_1<i_2<...<i_t\leq p, ~p+1\leq j_1<j_2<...<j_t\leq n,$$
such that
$$t_{i_1}\leq t_{j_1},...,t_{i_t}\leq t_{j_t}.$$

For $\mathfrak{g}_{\mathbb{R}}=\mathfrak{su}(p,q)$, we know $\Delta(\fp^+)=\{\eps_i-\eps_j| 1\leq i\leq p, ~p+1\leq j\leq n\}$.  So  $m=q_2$ is just our $m(\lambda)$ in Theorem \ref{thm:main}.

Thus  $AV(L(\lambda))=\overline{\mathcal{O}_{q_2}}=\overline{\mathcal{O}_{m}}=\overline{\mathcal{O}_{m(\lambda)}}=\overline{\mathcal{O}_{k(\lambda)}}$.

When $\lambda$ is non-integral, we will have $N(\lambda)=L(\lambda)$ by \cite[Lem. 3.17]{EHW:83}. So $AV(L(\lambda))=\overline{\mathcal{O}_{r}}$.

\subsection{$\mathfrak{g}_{\mathbb{R}}=\mathfrak{sp}(2n,\mathbb{R})$}
 Suppose $L(\lambda)$ is a  highest weight Harish-Chandra module  with highest weight $\lambda$. We denote $\lambda+\rho=(t_1,...,t_n)$. Suppose $\lambda$ is integral. Then from \cite{EHW:83}, we have $t_i-t_j \in \mathbb{Z}_{> 0}$ for $1\leq i<j\leq n$. From \cite{BX}, we say that $(t_1,...,t_n,-t_n,-t_{n-1},...,-t_2,-t_1)$ is $(n,n)$-dominant.

By using Schensted insertion  algorithm for $(t_1,...,t_n,-t_n,-t_{n-1},...,-t_2,-t_1)$, we can get  a Young tableau $Y(\lambda^-)$ which consists of
at most two columns. Now we suppose the number of entries in the second column of  $Y(\lambda^-)$ is $c_2(Y(\lambda^-))=q_2$, then $c_1(Y(\lambda^-))=2n-q_2$. From Proposition \ref{thm:associated}, we know  
$$ AV(L(\lambda))= \bar{\mathcal{O}_{2q_2\od}}=\left\{
\begin{array}{ll}
\bar{\mathcal{O}_{q_2+1}}, & \hbox{if $q_2$ is odd,} \\
\bar{\mathcal{O}_{q_2}}, & \hbox{if $q_2$ is even.}
\end{array}
\right. $$

Then from Lemma \ref{Minimax}, we know $q_2$ is equal to the maximum integer $t$ for which there exists a sequence of indices
$$1\leq i_1<i_2<...<i_t\leq n, n\geq j_1>j_2>...>j_t\geq 1,$$
such that
$$t_{i_1}\leq -t_{j_1},...,t_{i_t}\leq -t_{j_t}.$$
Since $t_i-t_j \in \mathbb{Z}_{> 0}$ for $1\leq i<j\leq n$, we can make a new choice for these indices such that $$i_1=j_t, i_2=j_{t-1},...,i_t=j_1. $$
Thus we have
$$t_{i_1}\leq -t_{i_t}, t_{i_2}\leq -t_{i_{t-1}}, ..., t_{i_t}\leq -t_{i_1}.$$

When $q_2=t=2t'+1$ is an odd number, we will have 
$$t_{i_1}+t_{i_t}\leq 0, t_{i_2}+t_{i_{t-1}}\leq 0, ..., t_{i_{t'}}+t_{i_{t'+2}}\leq 0, t_{i_{t'+1}}\leq 0.$$

From from Lemma \ref{Minimax}, we know $m=t'$ is the maximum integer  for which there exists a sequence of indices
$$1\leq i_1<i_2<...<i_{m} < l<j_{m}<...<j_1\leq n,$$
such that
$$t_{i_1}+t_{j_1}\leq 0,...,t_{i_{m}}+t_{j_{m}}\leq 0, t_l\leq 0.$$
For $\mathfrak{g}_{\mathbb{R}}=\mathfrak{sp}(2n,\mathbb{R})$, we know $\Delta(\fp^+)=\{\eps_i+\eps_j| 1\leq i< j\leq n\}\cup \{2\eps_l| 1\leq l \leq n\}$.  So  $m+1$ is  just our $m(\lambda)$ in Theorem \ref{thm:main}.

Thus $q_2+1=t+1=2(m+1)$  and $AV(L(\lambda))=\overline{\mathcal{O}_{q_2+1}}=\overline{\mathcal{O}_{2(m+1)}}=\overline{\mathcal{O}_{2m(\lambda)}}==\overline{\mathcal{O}_{k(\lambda)}}$.

When $q_2=t=2t'$ is an even number, we will have 
$$t_{i_1}+t_{i_t}\leq 0, t_{i_2}+t_{i_{t-1}}\leq 0, ..., t_{i_{t'}}+t_{i_{t'+1}}\leq 0.$$

From the definition of $t$, we know $m=t'$ is the maximum integer  for which there exists a sequence of indices
$$1\leq i_1<i_2<...<i_{m} < l<j_{m}<...<j_1\leq n,$$
such that
$$t_{i_1}+t_{j_1}\leq 0,...,t_{i_{m}}+t_{j_{m}}\leq 0, t_l\leq 0.$$
So  $m$ is  just our $m(\lambda)$ in Theorem \ref{thm:main}.

Thus $q_2=t=2m$  and $AV(L(\lambda))=\overline{\mathcal{O}_{q_2}}=\overline{\mathcal{O}_{2m}}=\overline{\mathcal{O}_{2m(\lambda)}}=\overline{\mathcal{O}_{k(\lambda)}}$.

Now we suppose $\lambda$ is half-integral. Then from \cite{EHW:83}, we have $t_i-t_j \in \mathbb{Z}_{> 0}$ for $1\leq i<j\leq n$ and $t_k-\frac{1}{2} \in \mathbb{Z}$ for $1\leq k\leq n$.
By using Schensted insertion  algorithm for $(t_1,...,t_n,-t_n,-t_{n-1},...,-t_2,-t_1)$, we can get  a Young tableau $Y(\lambda^-)$ which consists of
at most two columns. Now we suppose the number of entries in the second column of  $Y(\lambda^-)$ is $c_2(Y(\lambda^-))=q_2$, then $c_1(Y(\lambda^-))=2n-q_2$. 
From Proposition \ref{thm:associated}, we know  
$$ AV(L(\lambda))= \bar{\mathcal{O}_{2q_2\ev+1}}=\left\{
\begin{array}{ll}
\bar{\mathcal{O}_{q_2}}, & \hbox{if $q_2$ is odd,} \\
\overline{\mathcal{O}_{q_2+1}}, & \hbox{if $q_2$ is even.}
\end{array}
\right. $$

When $q_2=t=2t'+1$ is an odd number, similarly to the integral case, we will have 
$$t_{i_1}+t_{i_t}\leq 0, t_{i_2}+t_{i_{t-1}}\leq 0, ..., t_{i_{t'}}+t_{i_{t'+2}}\leq 0, t_{i_{t'+1}}\leq 0,$$
and $m=t'$ is the maximum integer  for which there exists a sequence of indices
$$1\leq i_1<i_2<...<i_{m} < l<j_{m}<...<j_1\leq n,$$
such that
$$t_{i_1}+t_{j_1}\leq 0,...,t_{i_{m}}+t_{j_{m}}\leq 0, t_l\leq 0.$$
For $\mathfrak{g}_{\mathbb{R}}=\mathfrak{sp}(2n,\mathbb{R})$, we know $\Delta(\fp^+)=\{\eps_i+\eps_j| 1\leq i< j\leq n\}\cup \{2\eps_l| 1\leq l \leq n\}$.  Since $t_l\notin \mathbb{Z}$,  $m$ is  just our $m(\lambda)$ in Theorem \ref{thm:main}.

Thus $q_2=t=2m+1$  and $AV(L(\lambda))=\overline{\mathcal{O}_{q_2}}=\overline{\mathcal{O}_{2m+1}}=\overline{\mathcal{O}_{2m(\lambda)+1}}=\overline{\mathcal{O}_{k(\lambda)}}$.

When $q_2=t=2t'$ is an even number, similarly to above case, we will have $m=m(\lambda)=t'$ and $q_2+1=2m+1$. Thus $AV(L(\lambda))=\overline{\mathcal{O}_{q_2+1}}=\overline{\mathcal{O}_{2m+1}}=\overline{\mathcal{O}_{2m(\lambda)+1}}=\overline{\mathcal{O}_{k(\lambda)}}$.

\subsection{$\mathfrak{g}_{\mathbb{R}}=\mathfrak{so}^*(2n)$}

Suppose $L(\lambda)$ is a  highest weight Harish-Chandra module  with highest weight $\lambda$. We denote $\lambda+\rho=(t_1,...,t_n)$. Then from \cite[Lem. 3.17]{EHW:83}, we may suppose $\lambda$ is integral (otherwise $N(\lambda)$ will be irreducible). Also we have   $t_1+t_2\in \mathbb{Z}$  and $t_i-t_j \in \mathbb{Z}_{> 0}$ for $1\leq i<j\leq n$. 
Then we have $$t_1>t_2>...>t_{n-1}>t_n, -t_n>-t_{n-1}>...>-t_1. $$ From \cite{BX}, we say that $(t_1,...,t_n,-t_n,-t_{n-1},...,-t_2,-t_1)$ is $(n,n)$-dominant.
When $t_n\geq 0$, $L(\lambda)$ will be finite-dimensional. We only consider infinite-dimensional modules. Thus we suppose $t_n<0$.

By using Schensted insertion  algorithm for $(t_1,\dots,t_n,-t_n,-t_{n-1},\ldots,-t_2,-t_1)$, we can get  a Young tableau $Y(\lambda^-)$ which consists of
at most two columns. Now we suppose the number of entries in the second column of  $Y(\lambda^-)$ is $c_2(Y(\lambda^-))=q_2$, then $c_1(Y(\lambda^-))=2n-q_2$. 
From Proposition \ref{thm:associated}, we know  
$$ AV(L(\lambda))= \bar{\mathcal{O}_{[\frac{q_2}{2}]}}=\left\{
\begin{array}{ll}
\bar{\mathcal{O}_{\frac{q_2-1}{2}}}, & \hbox{if $q_2$ is odd,} \\
\bar{\mathcal{O}_{\frac{q_2}{2}}}, & \hbox{if $q_2$ is even.}
\end{array}
\right. $$

Then similar to the case of $\sp(2n,\R)$,
we know $q_2$ is equal to the maximum integer $t$ for which there exists a sequence of indices
$$1\leq i_1<i_2<...<i_t\leq n,$$
such that
$$t_{i_1}\leq -t_{i_t}, t_{i_2}\leq -t_{i_{t-1}}, ..., t_{i_t}\leq -t_{i_1}.$$

When $q_2=t=2t'+1$ is an odd number, we will have 
$$t_{i_1}+t_{i_t}\leq 0, t_{i_2}+t_{i_{t-1}}\leq 0, ..., t_{i_{t'}}+t_{i_{t'+2}}\leq 0, t_{i_{t'+1}}\leq 0.$$

From the definition of $t$, we know $m=t'$ is the maximum integer  for which there exists a sequence of indices
$$1\leq i_1<i_2<...<i_{m} <j_{m}<...<j_1\leq n,$$
such that
$$t_{i_1}+t_{j_1}\leq 0,...,t_{i_{m}}+t_{j_{m}}\leq 0.$$

For $\mathfrak{g}_{\mathbb{R}}=\mathfrak{so}^*(2n)$, we know $\Delta(\fp^+)=\{\eps_i+\eps_j| 1\leq i< j\leq n\}$.  So  $m=t'=\frac{q_2-1}{2}$ is  just our $m(\lambda)$ in Theorem \ref{thm:main}.
Thus  $AV(L(\lambda))=\overline{\mathcal{O}_{\frac{q_{2}-1}{2}}}=\overline{\mathcal{O}_{m}}=\overline{\mathcal{O}_{m(\lambda)}}=\overline{\mathcal{O}_{k(\lambda)}}$.

When $q_2=t=2t'$ is an even number, similarly we will have 
$AV(L(\lambda))=\overline{\mathcal{O}_{\frac{q_2}{2}}}=\overline{\mathcal{O}_{m}}=\overline{\mathcal{O}_{m(\lambda)}}=\overline{\mathcal{O}_{k(\lambda)}}$.


\subsection{$\mathfrak{g}_{\mathbb{R}}=\mathfrak{so}(2,2n-1)$}
 Suppose $L(\lambda)$ is a  highest weight Harish-Chandra module with highest weight $\lambda$. We denote $\lambda+\rho=(t_1,...,t_n)$. Suppose $\lambda$ is integral. Then from \cite{EHW:83}, we have $t_n> 0$, $t_1-t_2\in \mathbb{Z}$, $2t_k\in  \mathbb{Z}$ for $1\leq k\leq n $ and $t_i-t_j \in \mathbb{Z}_{> 0}$ for $2\leq i<j\leq n$. Then we have $t_2>t_3>...>t_n>-t_n>...>-t_2 $.
 When $t_1>t_2$, $L(\lambda)$ will be finite-dimensional and $V(L(\lambda))=\overline{\mathcal{O}}_{0}$. We only consider infinite-dimensional modules. Thus we suppose $t_1\leq t_2$.
 
By using Schensted insertion  algorithm for $(t_1,...,t_n,-t_n,-t_{n-1},...,-t_2,-t_1)$, we can get  a Young tableau $Y(\lambda^-)$ which consists of
at most three columns. From the construction process, we can see that $Y(\lambda^-)$ will be a Young tableau consisting of two columns with $c_1(Y(\lambda^-))=2n-2$ and $c_2(Y(\lambda^-))=2$ when $t_2\geq t_1>-t_n$, and it will be a Young tableau consisting of three columns with $c_1(Y(\lambda^-))=2n-2$, $c_2(Y(\lambda^-))=1$ and $c_3(Y(\lambda^-))=1$ when $t_1\leq -t_n$.

From Proposition \ref{thm:associated}, we have  $AV(L(\lambda))=\overline{\mathcal{O}_{2}}$ for both cases.

For $\mathfrak{g}_{\mathbb{R}}=\mathfrak{so}(2,2n-1)$, we know $\Delta(\fp^+)=\{\eps_1\pm \eps_j| 2\leq i< j\leq n\}\cup \{\eps_l| 1\leq l \leq n\}$. When $t_2\geq t_1>-t_n$, we have $t_1-t_2\leq 0$.
So  $m=1$ is  just our $m(\lambda)$ in Theorem \ref{thm:main}  and $AV(L(\lambda))=\overline{\mathcal{O}_{2}}=\overline{\mathcal{O}_{2m(\lambda)}}=\overline{\mathcal{O}_{k(\lambda)}}$.

When $t_1\leq -t_n$, we have $t_1+t_n\leq 0$.
So  $m=1$ is  just our $m(\lambda)$ in Theorem \ref{thm:main} and $AV(L(\lambda))=\overline{\mathcal{O}_{2}}=\overline{\mathcal{O}_{2m(\lambda)}}=\overline{\mathcal{O}_{k(\lambda)}}$.

Now we suppose $\lambda$ is half-integral. Then from Enright--Howe--Wallach \cite{EHW:83}, we have $t_n> 0$, $t_1-t_2\in \frac{1}{2}+\mathbb{Z}$, $2t_k\in  \mathbb{Z}$ for $1\leq k\leq n $ and $t_i-t_j \in \mathbb{Z}_{> 0}$ for $2\leq i<j\leq n$. Then we have $t_2>t_3>...>t_n>-t_n>...>-t_2 $.

From Proposition \ref{thm:associated}, we have   $AV(L(\lambda))=\left\{
\begin{array}{ll}
\bar{\mathcal{O}_{1}}, & \hbox{if  $t_1>0$,} \\
\bar{\mathcal{O}_{2}}, & \hbox{if $t_1\leq 0$.}
\end{array}
\right.$

When $t_1>0$, we find that $m(\lambda)=0$. So  $AV(L(\lambda))=\overline{\mathcal{O}_{1}}=\overline{\mathcal{O}_{2m(\lambda)+1}}=\overline{\mathcal{O}_{k(\lambda)}}$.

When $t_1\leq 0$, we have $m(\lambda)=1$ and $2m(\lambda)+1=3>2$.
We still have $AV(L(\lambda))=\overline{\mathcal{O}_{2}}=\overline{\mathcal{O}_{k(\lambda)}}$.

\subsection{$\mathfrak{g}_{\mathbb{R}}=\mathfrak{so}(2,2n-2)$}

 Suppose $L(\lambda)$ is a  highest weight Harish-Chandra module  with highest weight $\lambda$. We denote $\lambda+\rho=(t_1,...,t_n)$. Then from \cite[Lem. 3.17]{EHW:83}, we may suppose $\lambda$ is integral (otherwise $N(\lambda)$ will be irreducible). Also we have  $t_1- t_2\in \mathbb{Z}$, $t_{n-1}-|t_n|>0$ and $t_i-t_j \in \mathbb{Z}_{> 0}$ for $2\leq i<j\leq n$. 
 Then we have $$t_2>t_3>...>t_{n-1}>|t_n|\geq 0\geq -|t_n|>-t_{n-1}>...>-t_2. $$
When $t_1>t_2$, $L(\lambda)$ will be finite-dimensional and $AV(L(\lambda))=\overline{\mathcal{O}_{0}}$. We only consider infinite-dimensional modules. Thus we suppose $t_1\leq t_2$.

By using Schensted insertion  algorithm for $(t_1,...,t_n,-t_n,-t_{n-1},...,-t_2,-t_1)$, we can get  a Young tableau $Y(\lambda^-)$ which consists of
at most four columns.

From Proposition \ref{thm:associated}, when $-|t_n|<  t_1\leq t_2$,  then $ q(\lambda^-)=(2n-2, 2)$ or $(2n-3,3)$, we get $AV(L(\lambda))=\overline{\mathcal{O}_{1}}$.

For $\mathfrak{g}_{\mathbb{R}}=\mathfrak{so}(2,2n-2)$, we know $\Delta(\fp^+)=\{\eps_1\pm \eps_j| 2\leq i< j\leq n\}$. For $t_n> 0$ and $t_2\geq t_1>-t_n$, we have $t_1-t_2\leq 0$ and $t_1+t_n>0$.
For $t_n\leq 0$ and $t_2\geq t_1> t_n$, we have $t_1-t_2\leq 0$ and $t_1-t_n>0$. So for both cases,  $m=1$ is  just our $m(\lambda)$ in Theorem \ref{thm:main} and $AV(L(\lambda))=\overline{\mathcal{O}_{1}}=\overline{\mathcal{O}_{m(\lambda)}}=\overline{\mathcal{O}_{k(\lambda)}}$.

 From Proposition \ref{thm:associated}, when $ t_1\leq -|t_n| $, then $ q(\lambda^-)=(2n-2, 1^2)$ or $(2n-3, 1^3)$, one has $AV(L(\lambda))=\overline{\mathcal{O}_{2}}$.

%

%

When $t_1\leq -t_n<0$, we have $t_1+t_n\leq 0$ and $t_1-t_n\leq 0$. When $t_1\leq t_n\leq 0$, we have $t_1-t_n\leq 0$ and $t_1+t_n\leq 0$.
So for both cases,  $m=2$ is  just our $m(\lambda)$ in Theorem \ref{thm:main} and $AV(L(\lambda))=\overline{\mathcal{O}_{2}}=\overline{\mathcal{O}_{m(\lambda)}}=\overline{\mathcal{O}_{k(\lambda)}}$.


\subsection{$\mathfrak{g}_{\mathbb{R}}=\mathfrak{e}_{6(-14)}$}

Suppose $L(\lambda)$ is a  highest weight Harish-Chandra module with highest weight $\lambda$. We denote $\lambda+\rho=(t_1,...,t_8)$. Then from \cite[Lem. 3.17]{EHW:83}, we may suppose $\lambda$ is integral (otherwise $N(\lambda)$ will be irreducible). Also we have $|t_1|< t_2<...<  t_5$, $t_i-t_j\in \mathbb{Z}$ and $2t_i\in \mathbb{Z}$ for all $1\leq i, j\leq 5$.

The Hasse diagram of $\Delta(\fp^+)$ for $\mathfrak{g}_{\mathbb{R}}=\mathfrak{e}_{6(-14)}$ is shown in the Appendix.


The highest root is $\theta=  \alpha_{1}+2\alpha_{2}+2\alpha_{3}+3\alpha_{4}+2\alpha_{5}+\alpha_{6}=\frac{1}{2}(1,1,1,1,1,-1,-1,1)$. 

We denote $$\beta_1=\alpha_1+\alpha_3+\alpha_4+\alpha_5=\frac{1}{2}(-1,-1,-1,1,-1,-1,-1,1)$$ and $$\beta_2=\alpha_1+\alpha_3+\alpha_4+\alpha_2=\frac{1}{2}(1,1,1,-1,-1,-1,-1,1).$$

From Proposition~\ref{67}, we have$A_1=\{\alpha_1\}$ and $A_2=\{\beta_1,\beta_2\}$.

If $ \Delta_\lambda^+\cap A_1\neq\emptyset$, we will have $\IP{\lambda+\rho}{\alpha_1^\vee}>0$. Since $\alpha_1$ is the minimal noncompact root in $\Delta(\fp^+)$, we will have $\IP{\lambda+\rho}{\alpha^\vee}>0$ for all $\alpha \in \Delta(\fp^+)$ and $m(\lambda)=0$. Thus $L(\lambda)$ will be finite-dimensional and $AV(L(\lambda))=\overline{\mathcal{O}_{0}}$. So $k(\lambda)=m(\lambda)=0$.

If $\Delta_\lambda^+\cap A_1=\emptyset$ and $\Delta_\lambda^+\cap A_2\neq\emptyset$, we will have $\IP{\lambda+\rho}{\alpha_1^\vee}\leq 0$ and there is at least one root of $A_2$ 
satisfying $\IP{\lambda+\rho}{\alpha^\vee} > 0$. So we have $m(\lambda)\geq 1$. From the diagram of $\Delta(\fp^+)$ for $\mathfrak{g}_{\mathbb{R}}=\mathfrak{e}_{6(-14)}$, we find that if an antichain $A\subseteq \Delta(\fp^+) $ 
has $m(\lambda)=2$, it must contain $\beta_1$ and $\beta_2$ since they are the smallest  incompatible pair in $\Delta(\fp^+) $.
So we must have $m(\lambda)=1$ since $\Delta_\lambda^+\cap A_2\neq\emptyset$. Thus  $k(\lambda)=m(\lambda)=1$.

If $\Delta_\lambda^+\cap A_2=\emptyset$, $\beta_1$ and $\beta_2$ will satisfy that $\IP{\lambda+\rho}{\alpha^\vee} \leq 0$.  Thus, we have $m(\lambda)\geq 2$. The equality must hold since $m(\lambda)\leq 2$ by the diagram of $\Delta(\fp^+)$. So $k(\lambda)=m(\lambda)=2$.

\subsection{$\mathfrak{g}_{\mathbb{R}}=\mathfrak{e}_{7(-25)}$}

Suppose $L(\lambda)$ is a  highest weight Harish-Chandra module  with highest weight $\lambda$. We denote $\lambda+\rho=(t_1,...,t_8)$. Then from \cite[Lem. 3.17]{EHW:83}, we may suppose $\lambda$ is integral (otherwise $N(\lambda)$ will be irreducible). Also we have $|t_1|< t_2<...<  t_5$, $t_i-t_j\in \mathbb{Z}$ and $2t_i\in \mathbb{Z}$ for all $1\leq i, j\leq 5$.

The Hasse diagram of $\Delta(\fp^+)$ for $\mathfrak{g}_{\mathbb{R}}=\mathfrak{e}_{7(-25)}$ is shown in the Appendix.

The highest root is $\theta= 2\alpha_{1}+2\alpha_{2}+3\alpha_{3}+4\alpha_{4}+3\alpha_{5}+2\alpha_{6}+\alpha_7$.

We denote $$\beta_1=\alpha_7+\alpha_3+\alpha_4+\alpha_5+\alpha_6=(-1,0,0,0,0,1,0,0),$$  $$\beta_2=\alpha_7+\alpha_2+\alpha_4+\alpha_5+\alpha_6=(1,0,0,0,0,1,0,0),$$
$$\gamma_1=\beta_1+\alpha_2+\alpha_4+\alpha_1+\alpha_3=\frac{1}{2}(1,-1,-1,1,-1,1,-1,1),$$
$$\gamma_2=\beta_1+\alpha_1+\alpha_2+\alpha_4+\alpha_5=\frac{1}{2}(1,-1,-1,1,-1,1,-1,1),$$
and 
$$\gamma_3=\beta_1+\alpha_2+\alpha_4+\alpha_5+\alpha_6=(0,0,0,0,1,1,0,0).$$

From Proposition \ref{67}, we have 
$A_1=\{\alpha_7\}$, $A_2=\{\beta_1,\beta_2\}$, and
$A_3=\{\gamma_1,\gamma_2, \gamma_3\}$.

If $ \Delta_\lambda^+\cap A_1\neq\emptyset$, we will have $\IP{\lambda+\rho}{\alpha^\vee}>0$. Since $\alpha_7$ is the minimal noncompact root in $\Delta(\fp^+)$, we will have $\IP{\lambda+\rho}{\alpha^\vee}>0$ for all $\alpha \in \Delta(\fp^+)$ and $m(\lambda)=0$. Thus $L(\lambda)$ will be finite-dimensional and $AV(L(\lambda))=\overline{\mathcal{O}_{0}}$. So $k(\lambda)=m(\lambda)=0$.

If $\Delta_\lambda^+\cap A_1=\emptyset$ and $\Delta_\lambda^+\cap A_2\neq\emptyset$, we will have $\IP{\lambda+\rho}{\alpha_7^\vee} \leq 0$ and there is at least one root of $A_2$ 
satisfying $\IP{\lambda+\rho}{\alpha^\vee}> 0$. So we have $m(\lambda)\geq 1$. From the diagram of $\Delta(\fp^+)$ for $\mathfrak{g}_{\mathbb{R}}=\mathfrak{e}_{7(-25)}$, we find that if an antichain $A\subseteq \Delta(\fp^+) $ 
has $m(\lambda)=2$, it must contain $\beta_1$ and $\beta_2$ since they are the smallest  incompatible pair in $\Delta(\fp^+) $.
So we must have $m(\lambda)=1$ since $\Delta_\lambda^+\cap A_2\neq\emptyset$. Thus  $k(\lambda)=m(\lambda)=1$.

If $\Delta_\lambda^+\cap A_2=\emptyset$ and $\Delta_\lambda^+\cap A_3\neq\emptyset$, $\beta_1$ and $\beta_2$ will satisfy that $\IP{\lambda+\rho}{\alpha^\vee} \leq 0$.  Thus, we have $m(\lambda)\geq 2$.  From the diagram of $\Delta(\fp^+)$ for $\mathfrak{g}_{\mathbb{R}}=\mathfrak{e}_{7(-25)}$, we find that if an antichain $A\subseteq \Delta(\fp^+) $ 
has $m(\lambda)=3$, it must contain $\gamma_1$,  $\gamma_2$ and $\gamma_3$ since they are the smallest  incompatible triples in $\Delta(\fp^+) $.
So we must have $m(\lambda)=2$ since $\Delta_\lambda^+\cap A_3\neq\emptyset$. Thus $k(\lambda)=m(\lambda)=2$.

If $\Delta_\lambda^+\cap A_3=\emptyset$, then $\gamma_1$, $\gamma_2$ and $\gamma_3$ will satisfy that $\IP{\lambda+\rho}{\alpha^\vee} \leq 0$.  Thus, we have $m(\lambda)\geq 3$. The equality must hold since $m(\lambda)\leq 3$ by the diagram of $\Delta(\fp^+)$. So $k(\lambda)=m(\lambda)=3$.

This finishes our second proof of Theorem~\ref{thm:main}. \hfill $\Box$

\newpage
\appendix
\section{Hasse diagrams and distinguished antichains}\label{app:A}

The figures below show the Hasse diagrams of $\gD(\fp^+)$ for $\fg_\R$ in the simply-laced types. Let $c$ be the constant given in \cite[Table on p.~ 115]{EHW:83} and shown below. 
Then the roots in the distinguished antichains  
$A_k=\{\alpha\in \Delta(\fp^+) : \operatorname{ht}(\alpha)=(k-1)c+1\}$, 
$k=1,2,\ldots, r$,  are labelled so that $A_1=\{\alpha_*\}$, $A_2=\{\beta_1,\beta_2\}$, and $A_3=\{\gamma_1,\gamma_2,\gamma_3\}$.

\noindent
$\su(p,q)$, here shown for $p=4$, $q=3$:
\begin{equation*}
\raisebox{-5pc}{
\begin{pspicture}(0,-1)(10,3.5)
\normalsize
\uput[r](7,1){$c=1$}
\cnode*(1.5,0){.07}{a0}
\cnode*(1,.5){.07}{a1}
\cnode*(2,.5){.07}{a2}
\uput[r](1,.5){$\beta_{1}$ }
\uput[r](2,.5){$\beta_{2}$ }
\cnode*(.5,1){.07}{a3}
\cnode*(1.5,1){.07}{a4}
\cnode*(2.5,1){.07}{a5}
\uput[r](.5,1){$\gamma_{1}$ }
\uput[r](1.5,1){$\gamma_{2}$ }
\uput[r](2.5,1){$\gamma_{3}$ }
\cnode*(0,1.5){.07}{a6}
\cnode*(1,1.5){.07}{a7}
\cnode*(2,1.5){.07}{a8}
\cnode*(.5,2){.07}{a9}
\cnode*(1.5,2){.07}{a10}
\cnode*(1,2.5){.07}{a11}
\ncline{->}{a0}{a1}
\ncline{->}{a0}{a2}
\ncline{->}{a1}{a3}
\ncline{->}{a1}{a4}
\ncline{->}{a2}{a4}
\ncline{->}{a3}{a6}
\ncline{->}{a2}{a5}
\ncline{->}{a3}{a7}
\ncline{->}{a4}{a7}
\ncline{->}{a4}{a8}
\ncline{->}{a5}{a8}
\ncline{->}{a6}{a9}
\ncline{->}{a7}{a9}
\ncline{->}{a7}{a10}
\ncline{->}{a8}{a10}
\ncline{->}{a9}{a11}
\ncline{->}{a10}{a11}
\uput[r](1.3,-.3){$\alpha_{4}=\varepsilon_4-\varepsilon_5$}
\uput[r](.8,2.8){$\theta = \varepsilon_1-\varepsilon_7=\alpha_{1}+\alpha_{2}+\alpha_{3}+\alpha_{4}+\alpha_{5}+\alpha_{6}$}
\uput[d](1.1,.4){\scriptsize{3}}
\uput[d](0.6,.9){\scriptsize{2}}
\uput[d](0.1,1.4){\scriptsize{1}}
\uput[u](0.1,1.6){\scriptsize{5}}
\uput[u](0.6,2.1){\scriptsize{6}}
\end{pspicture}
}
\end{equation*}

\noindent
$\so(2,2n-2)$, here shown for $n=6$:
\begin{equation*}
\raisebox{-6.7pc}{
\begin{pspicture}(0,-1)(10,5)
\uput[r](7,2){$c=n-2=4$}
\cnode*(2,0){.07}{a0}
\cnode*(1.5,.5){.07}{a1}
\cnode*(1,1){.07}{a2}
\cnode*(.5,1.5){.07}{a3}
\cnode*(0,2){.07}{a5}
\cnode*(1,2){.07}{a6}
\uput[r](0,2){$\beta_{1}$ }
\uput[r](1,2){$\beta_{2}$ }
\cnode*(.5,2.5){.07}{a7}
\cnode*(1,3){.07}{a9}
\cnode*(1.5,3.5){.07}{a12}
\cnode*(2,4){.07}{b13}
\ncline{->}{a0}{a1}
\ncline{->}{a1}{a2}
\ncline{->}{a2}{a3}
\ncline{->}{a3}{a5}
\ncline{->}{a3}{a6}
\ncline{->}{a5}{a7}
\ncline{->}{a6}{a7}
\ncline{->}{a7}{a9}
\ncline{->}{a9}{a12}
\ncline{->}{a12}{b13}
\uput[d](1.6,.4){\scriptsize{2}}
\uput[d](1.1,.9){\scriptsize{3}}
\uput[d](0.6,1.4){\scriptsize{4}}
\uput[d](0.1,1.9){\scriptsize{5}}
\uput[u](0.1,2.1){\scriptsize{6}}
\uput[u](0.6,2.6){\scriptsize{4}}
\uput[u](1.1,3.1){\scriptsize{3}}
\uput[u](1.6,3.6){\scriptsize{2}}
\uput[r](1.8,-.3){$\alpha_{1}=\varepsilon_1-\varepsilon_2$}
\uput[r](1.8,4.3){$\theta =\varepsilon_1+\varepsilon_2= \alpha_{1}+2\alpha_{2}+2\alpha_{3}+2\alpha_{4}+\alpha_{5}+\alpha_{6}$}
\end{pspicture}
}
\end{equation*}

\noindent
$\so^*(2n)$, here shown for $n=6$:
\begin{equation*}
\raisebox{-2.2pc}{
\begin{pspicture}(0,1)(10,5)
\uput[r](7,2){$c=2$}
\cnode*(2,0){.07}{a0}
\cnode*(1.5,.5){.07}{a1}
\cnode*(1,1){.07}{a2}
\cnode*(2,1){.07}{c2}
\uput[r](1,1){$\beta_{1}$ }
\uput[r](2,1){$\beta_{2}$ }
\cnode*(.5,1.5){.07}{a3}
\cnode*(1.5,1.5){.07}{a4}
\cnode*(0,2){.07}{a5}
\cnode*(1,2){.07}{a6}
\cnode*(2,2){.07}{c6}
\uput[r](0,2){$\gamma_{1}$ }
\uput[r](1,2){$\gamma_{2}$ }
\uput[r](2,2){$\gamma_{3}$ }
\cnode*(.5,2.5){.07}{a7}
\cnode*(1.5,2.5){.07}{a8}
\cnode*(1,3){.07}{a9}
\cnode*(2,3){.07}{a10}
\cnode*(1.5,3.5){.07}{a12}
\cnode*(2,4){.07}{c13}
\ncline{->}{a0}{a1}
\ncline{->}{a1}{a2}
\ncline{->}{a1}{c2}
\ncline{->}{a2}{a3}
\ncline{->}{a2}{a4}
\ncline{->}{c2}{a4}
\ncline{->}{a3}{a5}
\ncline{->}{a3}{a6}
\ncline{->}{a4}{a6}
\ncline{->}{a4}{c6}
\ncline{->}{a5}{a7}
\ncline{->}{a6}{a7}
\ncline{->}{a6}{a8}
\ncline{->}{a7}{a9}
\ncline{->}{c6}{a8}
\ncline{->}{a8}{a9}
\ncline{->}{a8}{a10}
\ncline{->}{a9}{a12}
\ncline{->}{a10}{a12}
\ncline{->}{a12}{c13}
\uput[d](1.6,.4){\scriptsize{4}}
\uput[d](1.1,.9){\scriptsize{3}}
\uput[d](0.6,1.4){\scriptsize{2}}
\uput[d](0.1,1.9){\scriptsize{1}}
\uput[u](0.1,2.1){\scriptsize{5}}
\uput[u](0.6,2.6){\scriptsize{4}}
\uput[u](1.1,3.1){\scriptsize{3}}
\uput[u](1.6,3.6){\scriptsize{2}}
\uput[r](1.8,-.3){$\alpha_{6}=\varepsilon_5+\varepsilon_6$}
\uput[r](1.8,4.3){$\theta = \varepsilon_1+\varepsilon_2=\alpha_{1}+2\alpha_{2}+2\alpha_{3}+2\alpha_{4}+\alpha_{5}+\alpha_{6}$}
\end{pspicture}
}
\end{equation*}

\noindent
$\mathfrak{e}_{6(-14)}$:
\begin{equation*}
\raisebox{-6.5pc}{
	\begin{pspicture}(0,-.5)(10,5.5)	
 \uput[r](7,3){$c=3$}
\psset{linewidth=.5pt,labelsep=8pt,nodesep=0pt}
	\cnode*(2,0){.07}{a0}
	\cnode*(1.5,.5){.07}{a1}
	\cnode*(1,1){.07}{a2}
	\cnode*(.5,1.5){.07}{a3}
	\cnode*(1.5,1.5){.07}{a4}
        \uput[r](0.45,1.5){$\beta_{1}$ }
	\uput[r](1.35,1.5){$\beta_{2}$ }
	\cnode*(0,2){.07}{a5}
	\cnode*(1,2){.07}{a6}
	\cnode*(.5,2.5){.07}{a7}
	\cnode*(1.5,2.5){.07}{a8}
	\cnode*(1,3){.07}{a9}
	\cnode*(2,3){.07}{a10}
	\cnode*(.5,3.5){.07}{a11}
	\cnode*(1.5,3.5){.07}{a12}
	\cnode*(1,4){.07}{a13}
	\cnode*(.5,4.5){.07}{a14}
	\cnode*(0,5){.07}{a15}
	\ncline{->}{a0}{a1}
	\ncline{->}{a1}{a2}
	\ncline{->}{a2}{a3}
	\ncline{->}{a2}{a4}
	\ncline{->}{a3}{a5}
	\ncline{->}{a3}{a6}
	\ncline{->}{a4}{a6}
	\ncline{->}{a5}{a7}
	\ncline{->}{a6}{a7}
	\ncline{->}{a6}{a8}
	\ncline{->}{a7}{a9}
	\ncline{->}{a8}{a9}
	\ncline{->}{a8}{a10}
	\ncline{->}{a9}{a11}
	\ncline{->}{a9}{a12}
	\ncline{->}{a10}{a12}
	\ncline{->}{a11}{a13}
	\ncline{->}{a12}{a13}
	\ncline{->}{a13}{a14}
	\ncline{->}{a14}{a15}
	\uput[r](1.95,0){$\alpha_{1}$ }
	\uput[r](-.25,5.35){$\theta =  \alpha_{1}+2\alpha_{2}+2\alpha_{3}+3\alpha_{4}+2\alpha_{5}+\alpha_{6}$}
	\uput[d](1.6,.5){\scriptsize{3}}
	\uput[d](1.1,1){\scriptsize{4}}
	\uput[d](0.6,1.5){\scriptsize{5}}
	\uput[d](0.1,2){\scriptsize{6}}
	\uput[u](0.1,2){\scriptsize{2}}
	\uput[u](0.6,2.5){\scriptsize{4}}
	\uput[u](1.1,3){\scriptsize{3}}
	\uput[u](1.4,3.5){\scriptsize{5}}
	\uput[u](1.4,3.5){\scriptsize{5}}
	\uput[u](0.9,4.0){\scriptsize{4}}
	\uput[d](0.1,5.0){\scriptsize{2}}
\end{pspicture}
}
\end{equation*}

\vspace{2.5pc}

\noindent
$\mathfrak{e}_{7(-25)}$:
\begin{equation*}
\raisebox{-15pc}{
	\begin{pspicture}(0,-3)(10.5 ,8.5)
 \uput[r](7.2,5.5){$c=4$}
\psset{linewidth=.5pt,labelsep=8pt,nodesep=0pt}
	\cnode*(2.5,-.5){.07}{b0}
	\cnode*(2,0){.07}{a0}
	\cnode*(1.5,.5){.07}{a1}
	\cnode*(1,1){.07}{a2}
	\cnode*(.5,1.5){.07}{a3}
	\cnode*(1.5,1.5){.07}{a4}
	\cnode*(0,2){.07}{a5}
	\cnode*(1,2){.07}{a6}
	\cnode*(.5,2.5){.07}{a7}
	\cnode*(1.5,2.5){.07}{a8}
	\cnode*(1,3){.07}{a9}
	\cnode*(2,3){.07}{a10}
	\cnode*(.5,3.5){.07}{a11}
	\cnode*(1.5,3.5){.07}{a12}
	\cnode*(1,4){.07}{a13}
	\cnode*(.5,4.5){.07}{a14}
	\cnode*(0,5){.07}{a15}
	\cnode*(2.5,3.5){.07}{b10}
	\cnode*(2,4){.07}{b12}
	\cnode*(1.5,4.5){.07}{b13}
	\cnode*(1,5){.07}{b14}
	\cnode*(.5,5.5){.07}{b15}
	\cnode*(1.5,5.5){.07}{c14}
	\cnode*(1,6){.07}{c15}
	\cnode*(1.5,6.5){.07}{c16}
	\cnode*(2,7){.07}{c17}
	\cnode*(2.5,7.5){.07}{c18}
	\ncline{->}{b0}{a0}
	\ncline{->}{a0}{a1}
	\ncline{->}{a1}{a2}
	\ncline{->}{a2}{a3}
	\ncline{->}{a2}{a4}
	\ncline{->}{a3}{a5}
	\ncline{->}{a3}{a6}
	\ncline{->}{a4}{a6}
	\ncline{->}{a5}{a7}
	\ncline{->}{a6}{a7}
	\ncline{->}{a6}{a8}
	\ncline{->}{a7}{a9}
	\ncline{->}{a8}{a9}
	\ncline{->}{a8}{a10}
	\ncline{->}{a9}{a11}
	\ncline{->}{a9}{a12}
	\ncline{->}{a10}{a12}
	\ncline{->}{a11}{a13}
	\ncline{->}{a12}{a13}
	\ncline{->}{a13}{a14}	
	\ncline{->}{a14}{a15}
	\ncline{->}{b10}{b15}
	\ncline{->}{a10}{b10}
	\ncline{->}{a12}{b12}
	\ncline{->}{a13}{b13}
	\ncline{->}{a14}{b14}
        \ncline{->}{b13}{b14}
        \ncline{->}{b12}{b13}
        \ncline{->}{b10}{b12}
	\ncline{->}{a15}{b15}
	\ncline{->}{b14}{c14}
	\ncline{->}{c15}{b15}
	\ncline{->}{c14}{c15}
	\ncline{->}{c15}{c16}
        \ncline{->}{c16}{c17}
        \ncline{->}{c17}{c18}
	\uput[d](2.1,-.1){\scriptsize{6}}
	\uput[d](1.6,.4){\scriptsize{5}}
	\uput[d](1.1,.9){\scriptsize{4}}
	\uput[d](0.6,1.4){\scriptsize{3}}
	\uput[d](0.1,1.9){\scriptsize{1}}
	\uput[u](0.1,2.1){\scriptsize{2}}
	\uput[u](0.6,2.6){\scriptsize{4}}
	\uput[u](0.6,3.6){\scriptsize{5}}
	\uput[u](1.9,4.1){\scriptsize{3}}
	\uput[u](1.4,4.6){\scriptsize{4}}
	\uput[u](1.4,5.6){\scriptsize{2}}
	\uput[u](0.1,5.1){\scriptsize{6}}
	\uput[u](0.6,5.6){\scriptsize{5}}
	\uput[u](1.1,6.1){\scriptsize{4}}
	\uput[u](1.6,6.6){\scriptsize{3}}
	\uput[u](2.1,7.1){\scriptsize{1}}
	\uput[d](2.5,-.5){$\alpha_{7}$}
	\uput[r](0.4,1.5){$\beta_{1}$}
	\uput[r](1.4,1.5){$\beta_{2}$}
	\uput[r](0.4,3.5){$\gamma_{1}$}
	\uput[r](1.4,3.5){$\gamma_{2}$}
	\uput[r](2.4,3.5){$\gamma_{3}$}
	\uput[r](2.3,7.8){$\theta=2\alpha_1+2\alpha_2+3\alpha_3+4\alpha_4+2\alpha_6+\alpha_7$}
\end{pspicture}
}
\end{equation*}

\subsection*{Acknowledgments}
	  Z. Bai was supported  by the National Natural Science Foundation of
	China (No. 12171344) and the National Key $\textrm{R}\,\&\,\textrm{D}$ Program of China (No. 2018YFA0701700 and No. 2018YFA0701701). 	X. Xie was supported  by the National Natural Science Foundation of
	China (No. 12171030 and No.  11801031).

\newpage

\end{document}